\documentclass[12pt]{article}
\usepackage{amsmath,amssymb,amsthm,geometry,mathtools}
\usepackage[noadjust]{cite}
\usepackage[pdfstartview=FitH,
CJKbookmarks=true,
bookmarksnumbered=true,
bookmarksopen=true,
colorlinks,
linkcolor=blue,
anchorcolor=blue,
citecolor=blue,
urlcolor=blue
]{hyperref}

\geometry{margin=2.5cm}
\newcommand\myvskip{\vskip 6pt}

\hyphenpenalty=5000
\tolerance=1000
\title{Dualities and endomorphisms of pseudo-cones}

\author{Yun Xu\footnote{Email: xuyunll@163.com.},
Jin Li\footnote{Email: li.jin.math@outlook.com.},  
Gangsong Leng\footnote{Email: lenggangsong@163.com.}
\\
Department of Mathematics\\ Shanghai University\\
200444 S\lowercase{hanghai},  C\lowercase{hina}}

\date{}

\newtheorem{Lem}{Lemma}[section]
\newtheorem{Thm}[Lem]{Theorem}
\newtheorem{Rem}[Lem]{Remark}
\newtheorem{Cor}[Lem]{Corollary}

\DeclareMathOperator{\cl}{cl}
\DeclareMathOperator{\relint}{relint}

\DeclareMathOperator{\rec}{rec}
\DeclareMathOperator{\interior}{int}
\DeclareMathOperator{\bd}{bd}
\DeclareMathOperator{\lin}{lin}
\DeclareMathOperator{\aff}{aff}

\DeclareMathOperator{\R}{\mathbb{R}}
\newcommand\PC{\mathcal{PC}^n}
\DeclareMathOperator{\ML}{\mathcal{L}}

\begin{document}
\maketitle
\begin{quote}
\noindent \textbf{Abstract} \quad 
In this paper we study a class of convex sets which are called closed pseudo-cones and study a new duality of this class. 
It turns out that the duality characterizes closed pseudo-cones and is essentially the only possible abstract duality of them.
The characterization of the duality is corresponding to the classification of endomorphisms closed pseudo-cones.
\vspace{1em}
		
\noindent \textbf{Keywords} \quad duality, endomorphism, convex, pseudo-cone, cone
		
\noindent \textbf{MSC}(2020): 52A20, 52C07, 06B05, 06A99 
\end{quote}

\section{Introduction}\label{Sec:Intr}
Dualities of convex sets or convex functions play important roles in convex geometry. For example, polar sets, Legendre transforms, etc.
An interesting topic is studying  dualities and endomorphisms (correspondingly) by some of their abstract properties like involution, order-reversing and interchanging intersections and ``unions"; see Gruber \cite{MR1182848}, B\"{o}r\"{o}czky, Schneider \cite{MR2438994}, and Artstein-Avidan, V. Milman \cite{AM09Leg,MR2342714,MR2406688,AM11hid,MR2563222,MR2464254} and many others \cite{MR3439685,MR1138277,MR4252711,MR2985126,MR2468074,MR2795422,MR4175764,2110.11308,MR2238926,MR3062743,MR353147,2207.09758}.
See also studies of isometries \cite{LM2021,MR533134} and valuations \cite{Lud06,Lud10b}.
Recently, E. Milman, V. Milman and Rotem \cite{MR4175764} introduced a new duality of a subclass of compact convex sets, which are called reciprocal bodies. 

It seems that all previous studies of dualities of convex sets need to assume that the origin is contained in convex sets.
We will explain later why the polar operator is not a good duality if the origin is outside of convex sets.
This study not only just gives an example of duality of so-called pseudo-cones, but also establishes the complete classification of dualites of them.

We work in Euclidean space~$\R^n$.
Let $K$~be a nonempty convex set which does not contain the origin. It is called a convex \textit{pseudo-cone} if $\lambda x\in K$ for all $x\in K$ and $\lambda\geq 1$.
A convex pseudo-cone is called briefly a pseudo-cone in this paper if there is no confusion.
This new concept is closely related to $C$-close sets introduced by Schneider \cite{MR3810252}; see also \cite{YYZ2021+}.
Let $C$~be a closed convex cone in $\R^n$; i.e., $C$ is a closed convex set such that $\lambda x\in C$ for all $x\in C$ and $\lambda\ge 0$.  Further assume $C$ is pointed and has no empty interior. A closed convex set~$K\subset C$ is called \textit{$C$-close} if $C\setminus K$ is of positive finite volume (Lebesgue measure). We can verify that all $C$-close sets are closed pseudo-cones; see Theorem \ref{Thm:CCloseIsPseducone}. 
However, there is a closed pseudo-cone of full dimension but not $C$-close for any pointed closed convex cone~$C$. For instance, $K := \bigl\{(x,y)\in\R^2:xy\geq 1\text{ and }x,y>0\bigr\}$ in~$\R^2$.

Before introducing a new duality of pseudo-cones, it is helpful to remind that the polar set~$K^\circ$ of a closed convex set~$K$ is defined as
\begin{equation*}
K^\circ=\bigl\{x\in\R^n:\langle x,y\rangle\leq 1 \text{ for all } y\in K\big\}
\end{equation*}
and $C^\circ$ of a closed convex cone~$C$ can be defined equivalently to
\begin{equation*}
    C^\circ=\bigl\{x\in\R^n:\langle x, y\rangle\leq 0 \text{ for all } y\in C\bigr\}.
\end{equation*}
Now let $K$~be a pseudo-cone. The new duality $K^\ast$ of~$K$ is defined by
\begin{equation*}
    K^\ast=\bigl\{x\in\R^n:\langle x,y\rangle\leq -1 \text{ for all } y\in K\bigr\}.
\end{equation*}
The operator~``$\ast$" can, of course, be defined on all nonempty closed convex sets. In fact, it in return characterizes the class of all closed pseudo-cones since $K^\ast$~is either empty or a pseudo-cone for any nonempty closed convex set~$K$; see Lemma \ref{Lem:A*IsPC}.

The class of all closed pseudo-cones is a partially ordered set with the set inclusion.
Therefore, it is not hard to see that
\begin{equation*}
    \PC:=\{K\subset\R^n:K \text{ is a closed pseudo-cone, or }K=\emptyset, \text{ or }K=\R^n\}
\end{equation*}
is a lattice containing all closed pseudo-cones with
\begin{equation*}
    K\land L=K\cap L
\end{equation*}
and
\begin{equation*}
	K\lor L = \begin{cases}
		\cl[K,L]  & \text{ if }o\notin \cl[K,L];\\
		\R^n  & \text{ if }o\in \cl[K,L]
	\end{cases}
\end{equation*}
for any $K,L \in \PC$, see Lemma \ref{Lem:PC^nIsLattice}.
Here $\cl[K,L]$~is the closure of the convex hull of~$K$, $L$ and $o$~is the origin.

We extend ``$*$'' to~$\PC$ by defining $\emptyset^\ast = \R^n$ and $(\R^n)^\ast = \emptyset$.
Observe that the operator~``$\ast$" satisfies \\
(i) $K^{\ast\ast}=K$, \\
(ii) $K\subset L$ implies $K^\ast\supset L^\ast$, \\
(iii) $(K\lor L)^\ast = K^\ast\land L^\ast$, \\
(iv) $(K\land L)^\ast = K^\ast\lor L^\ast$, \\
\noindent whenever $K,L\in\PC$; see Section \ref{Sec:ProOfPC}.
Those properties also holds for ``$\circ$" on closed convex sets containing $o$.
However, ``$\circ$" does not satisfies (i), (iii) or (iv) on closed pseudo-cones. For example, let $n=2$, $K=\bigl\{(x,y):x\geq 0,\,y\geq 1\bigr\}$ and $L=\bigl\{(x,y):x\geq 0,\,y\leq -1\bigr\}$.

A operator satisfing (i) and~(ii) is called abstract duality in Artstein-Avidan and V. Milman \cite{MR2342714}.
Enlightened by \cite[Lemma 2]{AM09Leg}, the properties (iii) and~(iv) are deduced from the properties (i) and~(ii); see Lemma \ref{Lem:DualForLattice}.
One natural question is whether the new duality is essentially the only transform satisfying properties (i) and~(ii) or even only properties (iii) and~(iv).
The answers are the following.

\begin{Thm}\label{Thm:MainThm_Dual}
	Let $n\geq2$. A transform $\tau\colon\PC \to \PC$ satisfies
	\begin{gather*}
        \tau(K\land L)=\tau(K)\lor\tau(L),\\
	    \tau(K\lor L)=\tau(K)\land\tau(L)
	\end{gather*}
	for all~$K$, $L\in\PC$, if and only if either $\tau$ is constant, or $\tau(K)=g(K^\ast)$ for some $g\in\mathrm{GL}(n)$.
\end{Thm}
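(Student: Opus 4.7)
The strategy is to verify the ``if'' direction directly, then for the converse, compose $\tau$ with $\ast$ to reduce to a classification of lattice endomorphisms of $\PC$. A constant $\tau$ trivially satisfies both equations. For $\tau=g\circ\ast$ with $g\in\mathrm{GL}(n)$, the already-established properties (iii), (iv) of $\ast$ combined with the fact that $g$, as a linear homeomorphism fixing $o$, commutes with $\cap$, $\cl\conv$, and the condition $o\in\cl[K,L]$, yield
\begin{equation*}
\tau(K\land L)=g\bigl((K\land L)^{\ast}\bigr)=g(K^{\ast}\lor L^{\ast})=g(K^{\ast})\lor g(L^{\ast})=\tau(K)\lor\tau(L),
\end{equation*}
and analogously $\tau(K\lor L)=\tau(K)\land\tau(L)$.

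Conversely, assume $\tau$ satisfies both functional equations. First, $\tau$ is order-reversing: $K\subset L$ gives $K=K\land L$, so $\tau(K)=\tau(K)\lor\tau(L)$, hence $\tau(L)\subset\tau(K)$. Setting $\sigma:=\tau\circ\ast\colon\PC\to\PC$ and using (iii), (iv),
\begin{equation*}
\sigma(K\land L)=\tau\bigl((K\land L)^{\ast}\bigr)=\tau(K^{\ast}\lor L^{\ast})=\tau(K^{\ast})\land\tau(L^{\ast})=\sigma(K)\land\sigma(L),
\end{equation*}
and similarly $\sigma(K\lor L)=\sigma(K)\lor\sigma(L)$. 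Thus $\sigma$ is a lattice endomorphism of $\PC$, and by involutivity $K=K^{\ast\ast}$ the desired conclusion $\tau(K)=g(K^{\ast})$ is equivalent to $\sigma(K)=g(K)$.

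The task then reduces to showing that every lattice endomorphism of $\PC$ is either constant or of the form $K\mapsto g(K)$ with $g\in\mathrm{GL}(n)$. This endomorphism classification is the main obstacle, and I expect it to be treated as a companion result in the paper (the abstract explicitly couples the duality with the classification of endomorphisms). A direct proof would proceed in three steps. First, in the non-constant case, force $\sigma(\emptyset)=\emptyset$ and $\sigma(\R^{n})=\R^{n}$, using that $R_{x}\lor R_{-x}=\R^{n}$ (the origin lies in the convex hull) while $R_{x}\land R_{y}=\emptyset$ for linearly independent $x,y$, where $R_{x}:=\{\lambda x:\lambda\geq1\}$. Second, characterize pseudo-cone half-spaces $H_{u,1}:=\{y:\langle y,u\rangle\geq1\}$ and rays $R_{x}$ by lattice-theoretic properties preserved by $\sigma$, so that via the pairing $R_{x}^{\ast}=H_{-x,1}$ the map $\sigma$ induces a bijection of $\R^{n}\setminus\{o\}$. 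Third, reconstruct $g\in\mathrm{GL}(n)$ from this induced action, using the representation $K=\bigcap_{u\in-K^{\ast}}H_{u,1}$ and the fundamental theorem of affine geometry (where the hypothesis $n\geq2$ is essential). The delicate point is that $\PC$ has neither atoms nor coatoms---every ray contains smaller rays and every half-space can be enlarged toward $o$---so the usual ``start from extremal elements and extend'' strategy of Artstein-Avidan--Milman cannot be applied directly; one instead leverages the ray/half-space duality built into $\ast$.
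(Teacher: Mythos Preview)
Your reduction of Theorem~\ref{Thm:MainThm_Dual} to the endomorphism classification via $\sigma=\tau\circ\ast$ is correct and is exactly how the paper proceeds: it sets $\varphi(K)=\tau(K^{\ast})$ and invokes the companion Theorem~\ref{Thm:MainThm_Identity}. The ``if'' direction is likewise handled the same way.

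Where your outline parts company with the paper is in the sketch of the endomorphism classification itself. You propose to characterize rays $\overline{x}$ and half-spaces $H_{u,1}$ by lattice-theoretic properties preserved by $\sigma$, and you correctly flag that the absence of atoms and coatoms obstructs this; your proposed workaround (``leverage the ray/half-space duality built into $\ast$'') is not made precise and, as stated, does not work: the duality $\ast$ is an involution on the whole lattice, so it gives no lattice-invariant distinction between a ray and a half-space---an arbitrary endomorphism $\sigma$ need not intertwine with $\ast$ a priori.

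The paper sidesteps this entirely. The engine is a \emph{cardinality} argument (Lemma~\ref{Lem:countable}): any family of full-dimensional closed pseudo-cones whose pairwise intersections all equal a fixed proper $K$ is at most countable, because each member contributes a disjoint open piece carrying a rational point. By contrast, one can produce uncountable families of rays (and, inductively, of higher-dimensional ``wedges'' built from rays) with prescribed pairwise intersection. Playing these off against each other forces, step by step, $\varphi(\emptyset)=\emptyset$ (Lemma~\ref{Lem:EmptysetMapstoEmptyset}), then $\dim\varphi(\overline{x})=1$ (Lemmas~\ref{Lem:Point2PointForNGeq3} and~\ref{Lem:Point2PointForNGeq2}), then $\varphi(\R^{n})=\R^{n}$ (Lemma~\ref{Lem:AllMapstoAll}). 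Only after this does the paper extract a point map $f$ with $\varphi(\overline{x})=\overline{f(x)}$, show it preserves collinearity (Lemma~\ref{Lem:Collinear}), and apply the fundamental theorem of affine geometry; for $n=2$ an additional ad hoc argument (Lemmas~\ref{Lem:LimitesTo0ForN=2}--\ref{Lem:Point2PointForNEq2}) is needed to rule out two-dimensional images. None of this goes through a lattice-theoretic characterization of rays or half-spaces.
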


\begin{Cor}\label{Cor:MainCor}
	Let $n\geq2$. A transform $\tau\colon\PC \to \PC$ satisfies
	\begin{gather*}
		\tau \bigl(\tau (K)\bigr)=K,\label{equ:Cor:MainCor1}\\
		K \subset L \text{ implies } \tau (K) \supset
		\tau(L)\label{equ:Cor:MainCor2}
	\end{gather*}
	for all~$K$, $L\in\PC$, if and only if $\tau(K)=g(K^\ast)$ for some $g\in\mathrm{GL}(n)$.
\end{Cor}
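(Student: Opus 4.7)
The plan is to reduce Corollary~\ref{Cor:MainCor} to Theorem~\ref{Thm:MainThm_Dual}. The hypotheses on $\tau$ are involution and order-reversion on $\PC$, so I would first show that these two abstract properties force the two lattice-interchange identities that are the hypotheses of Theorem~\ref{Thm:MainThm_Dual}. This is precisely the content flagged in the introduction via Lemma~\ref{Lem:DualForLattice}, but the argument is purely order-theoretic: in any lattice, an inclusion-reversing involution automatically swaps meets and joins.

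Concretely, to prove $\tau(K\land L)=\tau(K)\lor\tau(L)$ I would argue as follows. The inclusions $K\land L\subset K$ and $K\land L\subset L$, combined with order-reversion, give $\tau(K\land L)\supset\tau(K)$ and $\tau(K\land L)\supset\tau(L)$, so by the join-characterization of $\lor$ in the lattice $\PC$ (Lemma~\ref{Lem:PC^nIsLattice}) we get $\tau(K\land L)\supset\tau(K)\lor\tau(L)$. Conversely, from $\tau(K)\lor\tau(L)\supset\tau(K)$ and $\tau(K)\lor\tau(L)\supset\tau(L)$, order-reversion together with involution $\tau\circ\tau=\mathrm{id}$ yields $\tau(\tau(K)\lor\tau(L))\subset\tau(\tau(K))=K$ and similarly $\subset L$, hence $\tau(\tau(K)\lor\tau(L))\subset K\land L$; applying $\tau$ one more time and using involution delivers the reverse inclusion $\tau(K)\lor\tau(L)\supset\tau(K\land L)$. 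The symmetric identity $\tau(K\lor L)=\tau(K)\land\tau(L)$ follows by running the same argument with $K,L$ replaced by $\tau(K),\tau(L)$ and then applying $\tau$ once, again using involution.

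With both lattice identities verified, Theorem~\ref{Thm:MainThm_Dual} applies and concludes that $\tau$ is either constant or of the form $\tau(K)=g(K^\ast)$ for some $g\in\mathrm{GL}(n)$. The final step is to rule out the constant case: if $\tau(K)=K_0$ for every $K\in\PC$, involution would force $K=\tau(\tau(K))=\tau(K_0)=K_0$ for every $K$, which is impossible because $\PC$ contains at least the two distinct elements $\emptyset$ and $\R^n$. Therefore $\tau(K)=g(K^\ast)$ for some $g\in\mathrm{GL}(n)$, as claimed.

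I do not anticipate a genuine obstacle. The only point requiring care is keeping track in the order-theoretic step of which inclusion uses order-reversion and which uses involution, and invoking the fact that $\PC$ is a lattice under set inclusion so that joins and meets are characterized as least upper bounds and greatest lower bounds---this is already provided by Lemma~\ref{Lem:PC^nIsLattice}. After that, the work of the paper is packaged inside Theorem~\ref{Thm:MainThm_Dual}, and the corollary follows with only the bookkeeping above.
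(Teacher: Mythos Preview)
Your proposal is correct and follows essentially the same route as the paper: the paper states (just before Lemma~\ref{Lem:DualForLattice}) that the Corollary is deduced from Theorem~\ref{Thm:MainThm_Dual} via that lemma, and you simply carry out that order-theoretic step inline rather than quoting the lemma. The only addition you supply that the paper leaves implicit is the one-line elimination of the constant case via involution, which is straightforward.
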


Theorem \ref{Thm:MainThm_Dual} is equivalent to the following classification of endomorphisms.
\begin{Thm}%[Characterization of the identity]
\label{Thm:MainThm_Identity}
	Let $n\geq2$. A transform $\varphi\colon\PC \to \PC$ satisfies
	\begin{gather*}
		\varphi(K\land L)=\varphi(K)\land\varphi (L),\label{equ:Thm:MainThm_Identity1}\\
		\varphi(K\lor L)=\varphi (K)\lor\varphi (L)\label{equ:Thm:MainThm_Identity2}
	\end{gather*}
	for all~$K$, $L\in\PC$, if and only if either $\varphi$ is constant, or $\varphi(K)=g(K)$ for some $g\in\mathrm{GL}(n)$.
\end{Thm}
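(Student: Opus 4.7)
The plan is to reduce Theorem \ref{Thm:MainThm_Identity} to Theorem \ref{Thm:MainThm_Dual}, which the excerpt states immediately before it. Given a transform $\varphi\colon\PC\to\PC$ satisfying both endomorphism identities, I define $\tau(K):=\varphi(K)^{\ast}$ and use properties~(iii) and~(iv) of the operator~``$\ast$" to check that
\begin{align*}
\tau(K\land L) &= \bigl(\varphi(K)\land\varphi(L)\bigr)^{\ast} = \varphi(K)^{\ast}\lor\varphi(L)^{\ast} = \tau(K)\lor\tau(L),\\
\tau(K\lor L) &= \bigl(\varphi(K)\lor\varphi(L)\bigr)^{\ast} = \varphi(K)^{\ast}\land\varphi(L)^{\ast} = \tau(K)\land\tau(L),
\end{align*}
so that $\tau$ meets the hypothesis of Theorem \ref{Thm:MainThm_Dual}. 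That theorem then yields two cases. If $\tau$ is constant, then applying~``$\ast$" and invoking the involution~(i) makes $\varphi$ constant. If instead $\tau(K)=h(K^{\ast})$ for some $h\in\mathrm{GL}(n)$, then $\varphi(K)=\bigl(h(K^{\ast})\bigr)^{\ast}$; a short change-of-variables in the definition of~``$\ast$" yields the identity $h(A)^{\ast}=(h^{\top})^{-1}(A^{\ast})$, whence $\varphi(K)=(h^{\top})^{-1}(K^{\ast\ast})=g(K)$ with $g:=(h^{\top})^{-1}\in\mathrm{GL}(n)$.

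Sufficiency needs no work: constants trivially satisfy the two identities, and for $g\in\mathrm{GL}(n)$ the map $K\mapsto g(K)$ commutes with intersection, with closed convex hulls, and with the property of not containing the origin, hence preserves both $\land$ and $\lor$ on~$\PC$. Running the reduction backwards shows conversely that any $\varphi$ of the classified form gives a $\tau$ of the type handled by Theorem \ref{Thm:MainThm_Dual}, which is precisely the equivalence of the two theorems asserted just above.

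The real difficulty is thus moved upstream to Theorem \ref{Thm:MainThm_Dual}, which I would prove by the usual atom/coatom strategy. In the non-constant case one first shows, by chasing the two lattice identities on $\emptyset$ and~$\R^{n}$, that $\tau(\emptyset)=\R^{n}$ and $\tau(\R^{n})=\emptyset$; then that $\tau$ must send the atoms of $\PC$, namely the rays $R_{u}=\{\lambda u:\lambda\geq 1\}$ for $u\neq o$, to the coatoms, namely shifted halfspaces $\{x:\langle x,v\rangle\leq -1\}$, producing a bijection $u\mapsto v=:f(u)$ on $\R^{n}\setminus\{o\}$. I expect the hardest step to be upgrading this pointwise $f$ to an element of $\mathrm{GL}(n)$: one must extract additivity and positive homogeneity from the two-ray identity $\tau(R_{u}\lor R_{v})=\tau(R_{u})\land\tau(R_{v})$, paying special attention to the boundary case where $u,v$ are antiparallel so that $o\in\cl[R_{u},R_{v}]$ forces the join to jump to~$\R^{n}$ and the corresponding meet on the image side must collapse to~$\emptyset$. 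Once $f\in\mathrm{GL}(n)$, extending the formula $\tau(K)=f(K^{\ast})$ from rays to arbitrary closed pseudo-cones follows routinely by $\land$-preservation and the representation of each pseudo-cone as an intersection of shifted halfspaces.
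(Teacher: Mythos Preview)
Your reduction $\tau(K):=\varphi(K)^{\ast}$ is correct and is exactly the equivalence the paper records at the end of Section~3. The issue is that you have the logical dependence backwards: in the paper, Theorem~\ref{Thm:MainThm_Dual} is \emph{not} proved first and then used to deduce Theorem~\ref{Thm:MainThm_Identity}. Rather, the paper proves Theorem~\ref{Thm:MainThm_Identity} directly (all of Section~4), and Theorem~\ref{Thm:MainThm_Dual} follows from the equivalence. So your reduction, while formally valid, shifts the entire content to a sketch of Theorem~\ref{Thm:MainThm_Dual} that you then have to supply from scratch.

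That sketch has a concrete error and a substantial gap. The error: the half-spaces $\{x:\langle x,v\rangle\le -1\}$ are \emph{not} coatoms of $\PC$; indeed $\PC$ has no coatoms at all, since any closed pseudo-cone $K\neq\R^n$ sits inside some $H^-_{u,\alpha}$ with $\alpha<0$, which in turn sits strictly inside $H^-_{u,\alpha/2}$. So there is no order-theoretic reason that an anti-endomorphism must send an atom $\overline u$ to a half-space, and your ``producing a bijection $u\mapsto f(u)$'' step has no footing. The gap: even after correcting the target class, proving that $\tau(\overline u)$ (equivalently $\varphi(\overline u)$) has the right shape is the heart of the matter and is far from routine. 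In the paper this takes a long chain of lemmas --- a countability obstruction (Lemma~\ref{Lem:countable}), the delicate arguments that $\varphi(\emptyset)=\emptyset$ and $\dim\varphi(\overline x)=1$ (Lemmas~\ref{Lem:EmptysetMapstoEmptyset}, \ref{Lem:Point2PointForNGeq3}, \ref{Lem:Point2PointForNGeq2}, with a separate and intricate treatment for $n=2$), then collinearity and growth lemmas to force the induced point map into $\mathrm{GL}(n)$ via the fundamental theorem of affine geometry (Lemmas~\ref{Lem:Collinear}--\ref{Lem:fIsLinearity}). Your final ``routine'' extension from rays to arbitrary $K$ by $\land$-preservation also needs care, since $\varphi$ is only assumed to preserve \emph{finitary} meets and joins; the paper handles this pointwise in Theorem~\ref{Thm:EndForNgeq3} rather than by an infinite-intersection argument.
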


Classifications of endomorphisms of closed convex cones were established by Schneider \cite{MR2468074} which required $n \ge 3$ since there is an additional endomorphism %of closed convex cones
in the case $n=2$. 
But it is not a problem for pseudo-cones.
Classifications of endomorphisms of other closed convex sets were first established by Gruber \cite{MR1138277} on compact convex sets, and later \cite{MR1182848} on origin-symmetric compact convex sets containing the origin in interiors; by B\"{o}r\"{o}czky and Schneider \cite{MR2438994} on compact convex sets containing the origin in interiors; and by Slomka \cite{MR2795422} on all closed convex sets and on closed convex sets containing the origin. 
Although there are some common ideas between those previous results and this study, this study has some specific techniques and we do not think there is some easy method to obtain our results from those previous results or vice versa.

When this paper is almost completed, we found that Artstein-Avidan, Sadovsky, and Wyczesany \cite{2110.11308} had mentioned that ``$\ast$" is an order reversing quasi involution, i.e., it satisfies the property (ii) and $K \subset K^{\ast\ast}$ for any $K,L \subset \R^n$. According to properties of general order reversing quasi involutions established in the same paper, ``$\ast$" is an abstract duality of $\PC$ and $\PC$ is a lattice.
However, showing that ``$\ast$" is essentially the only abstract duality of $\PC$ and classifying endomorphisms of $\PC$ are completely new.
	
The paper is organised as follows. Preliminaries and notation are given in section \ref{Sec:PreNot}. 
%In section \ref{Sec:ProOfPC}, first, We show all $C$-close sets are closed pseudo-cones, and give an equivalent definition of closed pseudo-cones. Then, we prove that the duality is essentially the only possible ``abstract" duality on closed pseudo-cones. However, the crucial in the proof is Theorem \ref{Thm:MainTheoremOfEnd}, which is the main purpose of Section \ref{Sec:Endo}. 
In section \ref{Sec:ProOfPC}, we study some properties of pseudo-cones and the new duality.
The endomorphisms on~$\PC$ are classified in Section \ref{Sec:Endo} including the case $n=1$. 
%For $n\geq 2$, the endomorphisms are essentially identities. Whereas for $n=1$, the endomorphisms are more complicated.

%It is clearly that the polar set of a pseudo-cone is not a pseudo-cone anymore.
%We remark that

%In $\R^2$, $A:=\{(x,y):x\geq 0,y\geq 1\}$, $B:=\{(x,y):x\geq 0,y\leq -1\}$, $C:=\{(x,y):x\leq 0)$,$D:=\{(x,y):x\leq 0,y=0\}$, we have
%$$(A\land B)^\circ=\R^2\neq C=A^\circ\lor B^\circ,$$
%$$(A\lor B)^\circ = \{o\}\neq D = A^\circ\land B^\circ.$$
	
\section{Preliminaries and notation}\label{Sec:PreNot}
Denote by $\R$ and $\R_+$, the set of real numbers and the set of non-negative real numbers, respectively. 
%The origin in $\R^n$ is denoted by $o$.
For $x,y \in \R^n$, we write $\langle x,y \rangle$ for the standard inner product, and $|x|$ for the Euclidean norm. 
The unit sphere is $\mathbb{S}^{n-1}:=\{x\in\R^n:|x|=1\}$. 
For $x \in \R^n$ and $\varepsilon>0$, a closed ball $B(x,\varepsilon):=\{y\in\R^n:|x-y|\leq\varepsilon\}$.

Let $H_{u,\alpha}=\{x\in\R^n:\langle x,u\rangle=\alpha\}$.
% hyperplane in $\R^n$ with a normal vector $u$
The two closed half-spaces separated by hyperplane $H_{u,\alpha}$ are $H_{u,\alpha}^+:=\{x\in\R^n:\langle x,u\rangle\geq\alpha\}$, and
$H_{u,\alpha}^-:=\{x\in\R^n:\langle x,u\rangle\leq\alpha\}$.
A $k$-flat is a $k$-dimensional affine subspace in $\R^n$.

The notation $\interior K$, $\cl K$, $\bd K$, $\relint K$ are the interior, closure, boundary and relative interior of $K\subset\R^n$, respectively.

For $K,L\subset\R^n$, $\Lambda\subset\R$, $\alpha\in\R$, we write $K+L=\{x+y:x\in K,y\in L\}$, $\Lambda K=\{\lambda x:\lambda\in\Lambda, x\in K\}$, $\langle K, L\rangle=\{\langle x, y\rangle:x\in K,y\in L\}$, and define $\Lambda\leq(\geq)\alpha$ if $\lambda\leq(\geq)\alpha$ for all $\lambda\in\Lambda$.

For every $x\neq o$, write $R_x=(0,+\infty)\{x\}$ and $\overline{x}=[1,+\infty)\{x\}$.
We further set $\overline o=\R^n$. 
%For $A\subset\R^n$, let $\overline A=\bigcup_{a\in A}\overline a$ \textcolor{red}{??}.
%and $\overline \emptyset = \emptyset$. 
In this paper, we do not distinguish between a singleton and the element of it, i.e. $x$ maybe mean $\{x\}$, if there is no confusion.
	
A set $K\subset\R^n$ is called convex if $(1-\lambda)x+\lambda y\in K$ whenever $x,y\in K$ and $\lambda\in[0,1]$. 
%A convex set $C\subset\R^n$ is called a convex cone if $\R_+C\subset C$. 
%A closed convex cone $C$ in $\R^n$ is pointed \textcolor{red}{??} if there is $u\in\mathbb{S}^{n-1}$ such that $\langle x,u\rangle>0$ for all $x\in C\setminus\{o\}$.
%A nonempty convex set $K\subset\R^n$ is a pseudo-cone if $o\notin K$ and if $[1,+\infty)K\subset K$.
%For $A\subset\R^n$, denote by $\conv A$ the convex hull of $A$, i.e. the smallest convex set containing $A$.
For $K,L\subset\R^n$, we denote by %$[K,L]=\conv(K\cup L)$. L
$[K,L]$ the convex hull of $K \cup L$, i.e. the smallest convex set containing $K \cup L$. In particular, for $x_1,x_2,\ldots,x_m\in\R^n$, $[x_1,x_2,\ldots,x_m]$ denotes the convex hull of $\{x_1,x_2,\ldots,x_m\}$.

If $K\subset\R^n$ is a nonempty closed convex set,
\begin{equation*}
\rec K:=\{x\in\R^n:K+x\subset K\}
\end{equation*}
is called the recession cone of $K$. In fact, $\rec K$ is always a closed convex cone.
	
If $K\subset\R^n$ is a closed convex set, $x\notin K$, then $p_K(x)$ denotes the metric projection point of $x$ onto $K$, i.e. $p_K(x)$ is the nearest point of $K$ from $x$. The support function $h_K\colon\R^n\rightarrow(-\infty,+\infty]$ of $K$ is defined by
$$h_K(u)=\sup\{\langle u,x\rangle:x\in K\}.$$
If $K$ is a closed pseudo-cone,
we define the radial function $\rho_K\colon (\R_+K)\setminus\{o\} \to \R$ of $K$ by
\begin{equation*}
    \rho_K(x)=-\min\{\lambda\in\R_+:\lambda x\in K\}.
\end{equation*}
Notice that $-\infty<\rho_K(x)<0$ for all $x\in (\R_+K)\backslash\{o\}$, since $K$ is closed and $o\notin K$.

%We say $A$ is an arc in $\mathbb{S}^1$ if $\R A$ is a closed convex cone. 

A partially ordered set $(\mathcal{Z},\preceq)$ is a set $\mathcal{Z}$ together with a partial order $\preceq$, which satisfies that
	
(i) $z\preceq z$,
	
(ii) $z_1\preceq z_2\text{ and }z_2\preceq z_1\Rightarrow z_1=z_2$,
	
(iii) $z_1\preceq z_2\text{ and }z_2\preceq z_3\Rightarrow z_1\preceq z_3$.
\\whenever $z,z_1,z_2,z_3 \in \mathcal{Z}$.

Let $\mathcal{A}\subset\mathcal{Z}$ be a nonempty subset. 
The supremum of $\mathcal{A}$ (writing as $\sup\mathcal{A}$) is the least upper bound of $\mathcal{A}$; the infimum of $\mathcal{A}$ (writing as $\inf\mathcal{A}$) is the greatest lower bound of $\mathcal{A}$. See \cite[Chapter 2]{MR1940513} for details.
	
A lattice $\ML$ is a partial order set $(\ML,\preceq)$ such that both $\sup\{x,y\}$ and $\inf\{x,y\}$ exist in $\ML$ for any 2-element set $\{x,y\}\subset\ML$. Write
\begin{gather}\label{eq:03-12-1014}
    x\land y=\inf\{x,y\}\text{ and }x\lor y=\sup\{x,y\}.
\end{gather}
An endomorphism on a lattice $\ML$ is a map $\varphi:\ML\rightarrow \ML$ such that
\begin{gather*}
\varphi(x\land y)=\varphi(x)\land \varphi(y)\text{ and }\varphi(x\lor y)=\varphi(x)\lor \varphi(y).
\end{gather*}

The following lemma will be used many times implicitly.

\begin{Lem}\label{Lem:EndoIsInc}
    Let $(\ML,\lor,\land)$ be a lattice. If $\tau$ is an endomorphism of $\ML$, then
    %$\tau$ is increasing, i.e.
    \begin{gather}\label{eq:03-12-1016}
        x\preceq y \Longrightarrow \tau(x)\preceq \tau(y).
    \end{gather}
\end{Lem}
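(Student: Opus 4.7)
The plan is to exploit the standard characterization of the partial order in a lattice in terms of the meet (or, equivalently, the join) operation, and then apply the endomorphism property directly.

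First I would recall that in any lattice $(\ML,\lor,\land)$ one has the equivalence
\begin{equation*}
    x \preceq y \iff x \land y = x \iff x \lor y = y.
\end{equation*}
This follows immediately from the definitions of infimum and supremum: if $x \preceq y$ then $x$ is itself a lower bound of $\{x,y\}$, and since it is obviously the greatest such, $x \land y = x$; conversely, if $x \land y = x$ then $x$ is a lower bound of $\{x,y\}$, so $x \preceq y$.

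Next, assume $x \preceq y$. Then by the equivalence above, $x \land y = x$. Apply $\tau$ and use the assumption that $\tau$ commutes with $\land$:
\begin{equation*}
    \tau(x) = \tau(x \land y) = \tau(x) \land \tau(y).
\end{equation*}
By the same equivalence applied in the other direction, this equality $\tau(x) \land \tau(y) = \tau(x)$ is exactly $\tau(x) \preceq \tau(y)$, which is the desired conclusion \eqref{eq:03-12-1016}.

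The argument is short and has no real obstacle; the only subtlety worth flagging is that the $\lor$-preservation hypothesis is not needed here, since $\land$-preservation alone already yields monotonicity (and symmetrically, $\lor$-preservation alone would suffice via the dual characterization $x \preceq y \iff x \lor y = y$). Thus a map preserving either one of the two operations is automatically order-preserving, a fact that will be used repeatedly in the sequel.
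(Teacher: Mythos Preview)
Your proof is correct and follows essentially the same approach as the paper: both use the characterization $x \preceq y \iff x \land y = x$, apply $\tau$ to $x \land y = x$, and invoke $\land$-preservation to conclude $\tau(x) \land \tau(y) = \tau(x)$, hence $\tau(x) \preceq \tau(y)$. Your additional remark that $\land$-preservation alone suffices is a nice observation not made explicit in the paper, but the argument is otherwise identical.
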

\begin{proof}
    If $x\preceq y$, by \eqref{eq:03-12-1014}, then $x\land y = x$, which implies
    \begin{gather*}
        \tau(x)\land \tau(y)=\tau(x\land y)=\tau(x).
    \end{gather*}
    Together with \eqref{eq:03-12-1014}, one gets
    \begin{gather*}
        \tau(x)\preceq \tau(y),
    \end{gather*}
    which deduces \eqref{eq:03-12-1016}. It completes the proof.
\end{proof}

The following lemma is analogous to \cite[Lemma 2]{AM09Leg} and shows that Corollary \ref{Cor:MainCor} is deduced from Theorem \ref{Thm:MainThm_Dual}.
	
\begin{Lem}\label{Lem:DualForLattice}
	Let $(\ML,\lor,\land)$ be a lattice, and let $\tau\colon\ML \to \ML$ be a bijection. The following properties are equivalent \\
	(i) $$x\preceq y \Leftrightarrow \tau(y)\preceq \tau(x) \text{ for any } x,y \in \ML,$$
	(ii) $$\tau(x\lor y)=\tau(x)\land \tau(y) \text{ for any } x,y \in \ML,$$
	(iii) $$\tau(x\land y)=\tau(x)\lor \tau(y) \text{ for any } x,y \in \ML.$$
\end{Lem}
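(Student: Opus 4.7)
The plan is to establish the three-way equivalence by proving the cycles (i) $\Rightarrow$ (ii) $\Rightarrow$ (i) and (i) $\Rightarrow$ (iii) $\Rightarrow$ (i); since (ii) and (iii) are mirror images under swapping $\land$ with $\lor$, one pair of arguments determines the other, so I would write out only the first pair carefully.

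For (i) $\Rightarrow$ (ii), I would use monotonicity in two steps. From $x \preceq x\lor y$ and $y \preceq x\lor y$, property (i) gives $\tau(x\lor y)\preceq\tau(x)$ and $\tau(x\lor y)\preceq\tau(y)$, hence $\tau(x\lor y)\preceq\tau(x)\land\tau(y)$. For the reverse inequality I invoke surjectivity of $\tau$: pick $z\in\ML$ with $\tau(z)=\tau(x)\land\tau(y)$; then $\tau(z)\preceq\tau(x)$ and $\tau(z)\preceq\tau(y)$, so (i) forces $x\preceq z$ and $y\preceq z$, giving $x\lor y\preceq z$ and, applying (i) once more, $\tau(z)\preceq\tau(x\lor y)$.

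For (ii) $\Rightarrow$ (i), one direction is immediate: if $x\preceq y$, then $x\lor y=y$, and (ii) yields $\tau(y)=\tau(x)\land\tau(y)$, so $\tau(y)\preceq\tau(x)$. For the converse, assume $\tau(y)\preceq\tau(x)$; then $\tau(x\lor y)=\tau(x)\land\tau(y)=\tau(y)$, and injectivity of $\tau$ forces $x\lor y=y$, i.e.\ $x\preceq y$. The implications (i) $\Rightarrow$ (iii) and (iii) $\Rightarrow$ (i) follow by the dual argument, interchanging the roles of $\lor$ and $\land$ and replacing $x\lor y\preceq z$ by $z\preceq x\land y$ in the corresponding spots.

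I do not anticipate any genuine obstacle; the only subtlety is that bijectivity is really needed, with surjectivity used in (i) $\Rightarrow$ (ii) to realize $\tau(x)\land\tau(y)$ as a value of $\tau$, and injectivity used in (ii) $\Rightarrow$ (i) to cancel $\tau$ from both sides of $\tau(x\lor y)=\tau(y)$.
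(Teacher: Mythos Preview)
Your proof is correct and matches the paper's argument essentially line for line: the paper also proves (i) $\Leftrightarrow$ (ii) and then remarks that (i) $\Leftrightarrow$ (iii) is analogous, and its (ii) $\Rightarrow$ (i) step is identical to yours. The only cosmetic difference is in (i) $\Rightarrow$ (ii), where the paper phrases the reverse inequality via the observation that $\tau^{-1}$ also satisfies (i) and applies it to $a=\tau(x)$, $b=\tau(y)$, whereas you pick $z$ with $\tau(z)=\tau(x)\land\tau(y)$ directly; since this $z$ is exactly $\tau^{-1}(\tau(x)\land\tau(y))$, the two arguments are the same computation in different notation.
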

\begin{proof}
(i) $\Rightarrow$ (ii). Observe that $\tau^{-1}$ also satisfies (i).
Since $a\land b\preceq a, b$, 
we have
$ \tau^{-1}(a),\tau^{-1}(b) \preceq \tau^{-1}(a\land b)$,
which implies 
$$ \tau^{-1}(a)\lor \tau^{-1}(b) \preceq \tau^{-1}(a\land b)$$ 
for any $a,b \in \ML$. 
Thus,
	$$\tau^{-1}(\tau(x\lor y))=x\lor y=\tau^{-1}(\tau(x))\lor \tau^{-1}(\tau(y))\preceq \tau^{-1}(\tau(x)\land \tau(y)),$$
	which implies $\tau(x)\land \tau(y)\preceq \tau(x\lor y)$.
	On the other hand, from $x,y\preceq x\lor y$, we have $\tau(x\lor y)\preceq \tau(x),\tau(y)$, which deduces $\tau(x\lor y)\preceq \tau(x)\land \tau(y)$. 
	Therefore, $$\tau(x\lor y)=\tau(x)\land \tau(y).$$
		
(ii) $\Rightarrow$ (i). If $x\preceq y$, then $\tau(y)=\tau(x\lor y)=\tau(x)\land \tau(y)$ which implies $\tau(y)\preceq \tau(x)$.
    If $\tau(y)\preceq \tau(x)$, then $\tau(y)=\tau(x)\land \tau(y)=\tau(x\lor y)$. 
    Since $\tau$ is a bijection, it turns out $y=x\lor y$ which deduces $x\preceq y$. 
    
    (i) $\Leftrightarrow$ (iii).	It is similar to the proof of ``(i) $\Leftrightarrow$ (ii)". Therefore, we omit it.
\end{proof}

For details in convex geometry, see \cite{MR3155183,MR2335496,MR3331351,MR4180684,MR3410930,MR3930585}; and for details in lattice theory, see \cite{MR3587859,MR3307662,MR2768581}.
	
\section{Pseudo-cones and the duality}\label{Sec:ProOfPC}
%\subsection{\texorpdfstring{$C$}{C}-close sets}\label{Subsec:C-closeSets}
%Let $C$ be a fixed pointed closed convex cone with no empty interior in $\R^n$. A nonempty closed convex set $K\subset C$ is called $C$-close, if $C\setminus K$ is of positive finite volume. For details, see \cite{MR3810252}. 
Let $C$ be a pointed closed convex cone with non-empty interior in $\R^n$.
We first show that $C$-close sets are all closed pseudo-cones. % for every fixed pointed closed convex cone $C$.
	
\begin{Lem}\label{Lem:OrignalNotInA}
	If %$C$ is a closed convex cone in $\R^n$, pointed and with interior points, and 
	$K$ is a $C$-close convex set, then $o\notin K$.
\end{Lem}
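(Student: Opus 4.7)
The plan is to argue by contradiction: assume $o\in K$ and derive that the volume of $C\setminus K$ must be either zero or infinite.

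First I would exploit that $K$ is closed, convex, contained in $C$, and contains $o$, to describe $K$ ray-by-ray from the origin. For every $v\in C\cap \mathbb{S}^{n-1}$, the ray $R_v=\{tv:t\ge 0\}$ lies in $C$, and the intersection $K\cap R_v$ is a closed convex subset of $R_v$ containing $o$. Hence it is either all of $R_v$ (equivalently, $v\in\rec K$) or a bounded segment $[o,f(v)v]$ with some $f(v)\in[0,\infty)$.

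Next I would split into two cases according to whether $\rec K$ fills all of $C$ or not. If $\rec K\supset C\cap \mathbb{S}^{n-1}$, then $R_v\subset K$ for every such $v$, forcing $C\subset K$; together with the hypothesis $K\subset C$ this gives $K=C$, so $C\setminus K$ has volume zero, contradicting the positivity hypothesis. Otherwise, both $\rec K$ and $C$ are closed convex cones with $\rec K\subsetneq C$, so $(C\setminus\rec K)\cap\mathbb{S}^{n-1}$ is nonempty and relatively open in $C\cap\mathbb{S}^{n-1}$. Since $C$ has nonempty interior, $C\cap\mathbb{S}^{n-1}$ has positive $(n-1)$-dimensional spherical measure, and hence so does the relatively open subset $U:=(C\setminus\rec K)\cap\mathbb{S}^{n-1}$.

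Finally I would integrate in polar coordinates. For each $v\in U$ we have $f(v)<\infty$, so $R_v\cap(C\setminus K)\supset\{tv:t>f(v)\}$, a ray of infinite length. Then
\begin{equation*}
\mathrm{vol}(C\setminus K)\ \ge\ \int_{U}\int_{f(v)}^{\infty} t^{n-1}\,dt\,d\sigma(v)\ =\ +\infty,
\end{equation*}
again contradicting the assumption that $C\setminus K$ has finite volume. Either case yields a contradiction, so $o\notin K$.

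The only slightly delicate step is the second case: one must check that $\rec K\subsetneq C$ implies $(C\setminus\rec K)\cap\mathbb{S}^{n-1}$ has positive spherical measure, which I expect to handle using that both sets are closed cones and $C$ has nonempty interior, so the difference is relatively open and nonempty in $C\cap\mathbb{S}^{n-1}$. Everything else is a routine use of convexity, the characterization of $\rec K$ when $o\in K$, and polar coordinates.
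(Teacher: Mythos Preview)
Your argument is correct and takes a genuinely different route from the paper. The paper's proof is shorter and more geometric: it first locates a single closed ball $B=B(z,\varepsilon)\subset C\setminus K$ with $z\in\interior C$ (using that $C\setminus K$ has positive volume), then observes that if $o\in K$, convexity forces $[1,+\infty)B$ to miss $K$; since $C$ is a cone, this set lies in $C\setminus K$ and contains an infinite cylinder $(B\cap z^{\perp}+z)+\R_+z$, contradicting finite volume. So the paper produces one explicit infinite-volume subset, whereas you analyse $K$ ray-by-ray via $\rec K$ and integrate in polar coordinates.

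Your approach is a bit more machinery-heavy (recession cones, a case split, polar integration) but is perfectly sound; the one point you flagged as delicate is easily handled: since $\rec K\subsetneq C$ are closed convex cones and $C$ has nonempty interior, if $\interior C\subset\rec K$ then $C=\cl(\interior C)\subset\rec K$, a contradiction, so some $v_0\in(\interior C)\setminus\rec K$ exists, and a small spherical cap around $v_0/|v_0|$ lies in your set $U$, giving $\sigma(U)>0$. The paper's proof buys brevity and avoids measure-theoretic bookkeeping; yours gives a more structural picture (the obstruction is precisely that $\rec K\subsetneq C$ forces unbounded rays in the complement).
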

\begin{proof}
	Since $C\setminus K$ is of positive volume, we have $C\setminus K\neq\emptyset$. By choosing $x\in C\setminus K$, $y\in\interior C$ such that $y\ne x$, together with the closedness of $K$, it gets
	\begin{equation*}\label{equ:Lem:OrignalNotInA:proof1}
	    \emptyset\neq(x,y]\setminus K\subset \interior C.
	\end{equation*}
	Let $B=B(z,\varepsilon)$ be a closed ball such that $z\in(x,y]\setminus K$ and $B\subset C\setminus K$.
		
	Suppose that $o\in K$. Together with the convexity of $K$, we have $([1,+\infty)B) \cap K = \emptyset$.
	Also, since $C$ is a convex cone,
	\begin{equation*}
    	[1,+\infty)B\subset C\setminus K.
	\end{equation*}
	Observe
	\begin{equation*}
	    (B\cap H_{z,\langle z,z\rangle}) + \R_+z\subset[1,+\infty) B,
	\end{equation*}
	the left hand side of which is a cylinder of infinite height. Thus, $C\setminus K$ is of infinite volume, a contradiction. It completes the proof.
\end{proof}
	
\begin{Thm}\label{Thm:CCloseIsPseducone}
%	If $C$ is a closed convex cone in $\R^n$, pointed and with interior points, and 
If $K$ is a $C$-close convex set, then $K$ is a closed pseudo-cone.
\end{Thm}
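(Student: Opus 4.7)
The plan is to verify the defining conditions of a closed pseudo-cone for $K$: nonemptiness, closedness, convexity, $o\notin K$, and closure under multiplication by scalars $\lambda\geq 1$. Closedness and convexity are hypotheses in the definition of a $C$-close set; $o\notin K$ is precisely Lemma \ref{Lem:OrignalNotInA}; and if $K$ were empty, then $C\setminus K=C$ would have infinite volume (since $C$ is a closed convex cone with nonempty interior), contradicting $C$-closeness. Only the scaling condition needs substantive work, and the strategy is to show that a failure of scaling forces $C\setminus K$ to contain an infinite-volume cone.

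Suppose, toward a contradiction, that there exist $x_0\in K$ and $\lambda_0>1$ with $\lambda_0 x_0\notin K$. Convexity and closedness of $K$ force $T:=\{t\geq 0:tx_0\in K\}$ to be a closed interval; it excludes $0$ (since $o\notin K$), contains $1$ (since $x_0\in K$), and misses $\lambda_0$, so $T=[a,b]$ with $0<a\leq 1\leq b<\lambda_0$. In particular $(b+1)x_0\notin K$, and strictly separating this point from the closed convex set $K$ yields $u\in\R^n\setminus\{o\}$ and $\alpha\in\R$ with $K\subset H_{u,\alpha}^{-}$ and $\langle u,(b+1)x_0\rangle>\alpha$. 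Combined with $\langle u,bx_0\rangle\leq\alpha$, this forces $\langle u,x_0\rangle>0$.

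Next I pick $v\in\interior C$ with $\langle u,v\rangle>0$: take $v=x_0$ if $x_0\in\interior C$, otherwise perturb $x_0$ slightly along the segment toward some fixed interior point of $C$ (using the standard fact that the half-open segment from a boundary point toward an interior point lies in $\interior C$). Choose $r>0$ small enough that $B(v,r)\subset\interior C$ and $c:=\inf_{v'\in B(v,r)}\langle u,v'\rangle>0$. Since $C$ is a cone, $tB(v,r)=B(tv,tr)\subset C$ for every $t\geq 0$, and once $t$ exceeds $\alpha/c$, every point $y\in B(tv,tr)$ satisfies $\langle u,y\rangle\geq tc>\alpha$, so $B(tv,tr)\subset C\setminus K$. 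Taking the union for $t\geq t_0:=\max\{1,\alpha/c+1\}$ produces a set that contains the half-cylinder $\{tv:t\geq t_0\}+B(o,t_0 r)$, which has infinite volume, contradicting finiteness of $\mathrm{vol}(C\setminus K)$.

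The main obstacle is in the third paragraph: $v$ must simultaneously lie in $\interior C$ (so that the scaled balls $B(tv,tr)$ remain in $C$) and satisfy $\langle u,v\rangle>0$ uniformly on a small neighborhood; the segment-to-interior trick handles the first requirement, after which the second is automatic once $r$ is chosen small. Everything upstream — the existence of $[a,b]$, the strict separation, and the deduction $\langle u,x_0\rangle>0$ — is standard convex-geometric bookkeeping. Note that a direct attempt to mimic the proof of Lemma \ref{Lem:OrignalNotInA} by scaling a ball outside $K$ does not transfer, because that proof uses $o\in K$ crucially; the separating-hyperplane route is what replaces it in our setting.
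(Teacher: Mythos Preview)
Your proof is correct and follows the same opening as the paper: assume some $x_0\in K$ with $\lambda_0 x_0\notin K$, separate the offending point from $K$ by a hyperplane $H_{u,\alpha}$, and deduce the sign condition $\langle u,x_0\rangle>0$. The difference is in how the contradiction is extracted. The paper observes that $\alpha>0$, so $o\in H^-_{u,\alpha}$, and then sets $K'=C\cap H^-_{u,\alpha}$; since $K\subset K'$, the set $K'$ is again $C$-close, and a second application of Lemma~\ref{Lem:OrignalNotInA} yields $o\notin K'$, contradicting $o\in H^-_{u,\alpha}$. You instead build the infinite-volume region in $C\setminus K$ directly, essentially unrolling the argument of Lemma~\ref{Lem:OrignalNotInA} in situ. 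The paper's bootstrap is shorter and highlights that Lemma~\ref{Lem:OrignalNotInA} is doing double duty; your route is more self-contained and makes the geometric mechanism (a scaled ball sweeping out a half-cylinder outside $K$) explicit. Both are fine.
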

\begin{proof}
	By Lemma \ref{Lem:OrignalNotInA}, $o\notin K$.	It is enough to show that
	\begin{equation*}
	    x\in K\text{ implies } \lambda x\in K \text{ for all } \lambda>1.
	\end{equation*}
	We verify it indirectly. Suppose $a\in K$ and $b=\lambda_0a\notin K$ with some $\lambda_0>1$. Since $K$ is a nonempty closed convex set, we may choose a hyperplane $H_{u,\alpha}\subset\R^n$ such that
	\begin{equation*}
	    K\subset \interior H^-_{u,\alpha}\text{ and }b\in \interior H^+_{u,\alpha}.
	\end{equation*}
	Thus, $\langle a,u\rangle<\alpha$ and $\langle \lambda_0a,u\rangle = \langle b,u\rangle>\alpha$. Together with $\lambda_0>1$, we get $\alpha>0$. It is then clear that $o \in\interior H^-_{u,\alpha}$.
	
	Notice $b \in C$. 
	If $b\in\bd C$, we can choose $d\in\interior C$ such that $d \notin H^-_{u,\alpha}$. 
	If $b\in\interior C$, let $d=b$. Write $K'=C\cap H^-_{u,\alpha}$.
	Choose $\varepsilon_0>0$ such that $B(d,\varepsilon_0)\subset C\setminus H^-_{u,\alpha}$. Then $C\setminus K'$ is of positive volume. Observe that
	\begin{equation*}
	    C\setminus K'\subset C\setminus K,
	\end{equation*}
	which implies that $C\setminus K'$ is of finite volume. So, $K'$ is $C$-close. By Lemma \ref{Lem:OrignalNotInA}, we have $o\notin K'$, which contradicts $o \in H^-_{u,\alpha}$. It completes the proof.
\end{proof}
	
%\subsection{An equivalent definition of closed pseudo-cones}\label{Subsec:EquiDef}
	
%In this subsection, we give another definition of closed pseudo-cone which is equivalent to the previous one. First, we need a lemma giving the recession of a closed pseudo-cone.
The following lemma gives the recession cone of a closed pseudo-cone.

\begin{Lem}\label{Lem:ClR+K=RecK}
	If $K$ is a closed pseudo-cone, then $\cl\R_+K=\rec K$.
\end{Lem}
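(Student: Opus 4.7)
The plan is to establish the two inclusions $\R_+K\subset\rec K$ and $\rec K\subset\cl\R_+K$ separately. Since $\rec K$ is closed whenever $K$ is a closed convex set, the first inclusion upgrades automatically to $\cl\R_+K\subset\rec K$.

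For the inclusion $\R_+K\subset\rec K$, I would take $y=\lambda x$ with $\lambda\ge 0$ and $x\in K$, and check that $z+y\in K$ for every $z\in K$. The case $\lambda=0$ is trivial, so assume $\lambda>0$. The key trick is that for each $\varepsilon\in(0,\lambda]$, the pseudo-cone property gives $(\lambda/\varepsilon)x\in K$ because $\lambda/\varepsilon\ge 1$, and hence by convexity
\[
(1-\varepsilon)z+\varepsilon\cdot(\lambda/\varepsilon)x=(1-\varepsilon)z+\lambda x\in K.
\]
Letting $\varepsilon\to 0^+$ and invoking the closedness of $K$ yields $z+\lambda x\in K$, so $y\in\rec K$.

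For the reverse inclusion $\rec K\subset\cl\R_+K$, I would fix any $x_0\in K$ (which exists since a pseudo-cone is nonempty) and take $y\in\rec K$. By definition of the recession cone, $x_0+ty\in K$ for all $t\ge 0$, so for every $t>0$
\[
\frac{1}{t}(x_0+ty)=\frac{x_0}{t}+y\in\R_+K.
\]
Letting $t\to\infty$, the term $x_0/t$ vanishes, so $y$ is a limit of points in $\R_+K$ and therefore lies in $\cl\R_+K$.

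The main technical point is the convex-combination trick in the first inclusion: one has to balance a small convex coefficient $\varepsilon$ against a large scaling factor $\lambda/\varepsilon$ applied to $x$, so that the pseudo-cone property keeps the scaled vector inside $K$ while the closedness of $K$ legitimises the passage to the limit. The reverse inclusion is essentially the standard characterisation of recession directions as limits of rescaled points, and requires no extra input beyond the definition of $\rec K$.
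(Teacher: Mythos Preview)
Your proof is correct, and the overall two-inclusion strategy matches the paper's, but the execution differs in both halves.

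For $\R_+K\subset\rec K$, the paper avoids any limit: given $\lambda a_1\in\R_+K$ and $a_2\in K$ it writes
\[
\lambda a_1+a_2=(1+\lambda)\Bigl(\tfrac{\lambda}{1+\lambda}a_1+\tfrac{1}{1+\lambda}a_2\Bigr)\in(1+\lambda)K\subset K,
\]
so convexity plus the pseudo-cone property alone suffice, with no appeal to closedness. Your $\varepsilon\to0^+$ argument also works (modulo the harmless restriction $\varepsilon\le 1$ so that $(1-\varepsilon)z+\varepsilon(\cdot)$ is a genuine convex combination), but it spends the closedness hypothesis where it is not actually needed.

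For $\rec K\subset\cl\R_+K$, the paper argues by contradiction via hyperplane separation: if $x\in\rec K\setminus\cl\R_+K$, separate $x$ from the cone $\cl\R_+K$ by $H_{u,0}$ and observe that $a+\lambda x$ eventually crosses into the wrong half-space while remaining in $K$. Your direct rescaling $\tfrac{1}{t}(x_0+ty)\to y$ is more elementary and arguably cleaner; it sidesteps separation entirely and uses nothing beyond the definition of $\rec K$.
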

\begin{proof}
	First we prove $\cl\R_+K\subset\rec K$. Put $\lambda a_1\in\R_+K$ with $\lambda\geq0$ and $a_1\in K$, and put $a_2\in K$. We have
	\begin{equation*}
	    \lambda a_1+a_2=(1+\lambda)(\frac{\lambda}{1+\lambda}a_1+\frac{1}{1+\lambda}a_2)\in(1+\lambda)K\subset K,
	\end{equation*}
	which deduces that $\R_+ K\subset\rec K$ and then $\cl\R_+ K\subset\rec K$, since $\rec K$ is always closed.
		
Second, we prove $\cl\R_+K\supset\rec K$  indirectly. Suppose $x\in\rec K\backslash(\cl\R_+K)$. Choose a hyperplane $H_{u,0}$ that separates $x$ and $\cl\R_+K$, such that $x\in\interior H^+_{u,0}$ and $\cl\R_+K\subset H^-_{u,0}$. Fix any $a\in K$. Observe that
\begin{equation*}
	\langle u, a+\lambda x\rangle>0\text{ for large enough }\lambda>0,
\end{equation*}
which contradicts that $a+\lambda x \in K + \rec K \subset K$ for all $\lambda>0$. It completes the proof.
\end{proof}
	
Now, we give an equivalent definition of a closed pseudo-cone.
	
\begin{Thm}\label{Thm:EquiDefOfPC}
	If $K$ is a nonempty closed convex set not containing the origin, then $K$ is a pseudo-cone if and only if $K\subset\rec K$.
\end{Thm}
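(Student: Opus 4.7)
The plan is to deduce both directions essentially by unpacking the definitions and invoking Lemma~\ref{Lem:ClR+K=RecK}, which already packages the harder fact that $\cl\R_+K=\rec K$ for a closed pseudo-cone.

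For the ``only if'' direction, I would assume $K$ is a (closed) pseudo-cone and observe that taking $\lambda=1$ in the pseudo-cone condition gives $K\subset\R_+K$. Applying Lemma~\ref{Lem:ClR+K=RecK} then yields
\begin{equation*}
K\subset \R_+K\subset \cl\R_+K=\rec K,
\end{equation*}
which is the desired inclusion.

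For the ``if'' direction, I would assume $K$ is a nonempty closed convex set with $o\notin K$ and $K\subset\rec K$, and verify that $\lambda x\in K$ whenever $x\in K$ and $\lambda\geq 1$. Fix such $x$ and $\lambda$. Since $x\in K\subset\rec K$ and $\rec K$ is a convex cone (it is always closed under multiplication by nonnegative scalars), we get $(\lambda-1)x\in\rec K$. By the very definition of the recession cone, $K+(\lambda-1)x\subset K$, and in particular
\begin{equation*}
\lambda x=x+(\lambda-1)x\in K,
\end{equation*}
so $K$ is a pseudo-cone.

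I don't anticipate any real obstacle here: the statement is an immediate corollary of Lemma~\ref{Lem:ClR+K=RecK} on one side and the defining property $\rec K+K\subset K$ on the other, and the only thing to be careful about is that $o\notin K$ is kept as a standing hypothesis so that the resulting set really qualifies as a pseudo-cone in the paper's sense (the pseudo-cone condition $\lambda x\in K$ for $\lambda\ge 1$ is exactly what the argument delivers, and $K$ is nonempty and closed by assumption).
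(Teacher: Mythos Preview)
Your proof is correct and essentially identical to the paper's: the ``only if'' direction is obtained from Lemma~\ref{Lem:ClR+K=RecK} via $K\subset\R_+K\subset\cl\R_+K=\rec K$, and the ``if'' direction uses $\lambda x=x+(\lambda-1)x\in K+\rec K\subset K$ just as in the paper (the paper routes the same computation through $K+\R_+K\subset K+\R_+\rec K=K+\rec K$).
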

	
\begin{proof}
By Lemma \ref{Lem:ClR+K=RecK}, it is only necessary to show that $K\subset\rec K$ implies that $K$ is a pseudo-cone. Let $a\in K$, it is clear that
\begin{equation*}
	\lambda a=a+(\lambda - 1)a\in K + \R_+ K \subset K + \R_+\rec K = K + \rec K \subset K,
\end{equation*}
where $\lambda\geq1$. Thus, $K$ is a pseudo-cone.
\end{proof}
	
\begin{Rem}
	In fact, a nonempty closed convex set $K$ is a closed convex cone or a closed pseudo-cone if and only if $K\subset\rec K$.
\end{Rem}

\begin{Lem}\label{Lem:SupOfPC}
    For pseudo-cones $K,L$, if $o\notin \cl[K,L]$, then $\cl[K,L]$ is a closed pseudo-cone.
\end{Lem}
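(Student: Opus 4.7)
The plan is to verify the three defining properties of a closed pseudo-cone for $M := \cl[K,L]$. Nonemptiness is immediate since $K, L$ are nonempty; closedness and convexity are built into the notation $\cl[K,L]$; and $o \notin M$ is precisely the hypothesis. What remains is the scaling property: $\lambda x \in M$ for every $x \in M$ and every $\lambda \geq 1$.

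The key observation is that because $K$ and $L$ are themselves convex, every point of $[K,L]$ admits a representation as a two-term convex combination $(1-t)k + t\ell$ with $k \in K$, $\ell \in L$, and $t \in [0,1]$. So I would fix $a \in M$ and write $a = \lim_n \bigl((1-t_n)k_n + t_n \ell_n\bigr)$ with $k_n \in K$, $\ell_n \in L$, $t_n \in [0,1]$. Given $\lambda \geq 1$, the pseudo-cone property of $K$ and $L$ gives $\lambda k_n \in K$ and $\lambda \ell_n \in L$, so
\begin{equation*}
\lambda a \;=\; \lim_{n\to\infty}\bigl((1-t_n)(\lambda k_n) + t_n(\lambda \ell_n)\bigr)
\end{equation*}
is a limit of points of $[K,L]$ and hence lies in $\cl[K,L] = M$. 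This confirms the scaling property and completes the verification.

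An alternative route would be to invoke Theorem \ref{Thm:EquiDefOfPC} and show $M \subset \rec M$, i.e., $M + a \subset M$ for every $a \in M$. One would approximate both $a$ and an arbitrary $b \in M$ by convex combinations from $K$ and $L$ and then combine using the pseudo-cone property; this is slightly more bookkeeping, so I would favor the direct approach above.

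I do not expect a genuine obstacle here. The only mild subtlety is making sure that the two-term convex-combination representation is available (which uses the convexity of $K$ and $L$ individually) and that the representation is preserved under the scalar multiplication by $\lambda$ (which uses exactly the pseudo-cone scaling of $K$ and $L$). The parameters $t_n$ remain confined to the compact interval $[0,1]$, so no compactness issue arises, and $o \notin M$ is imposed as a hypothesis rather than something to be derived. The argument is therefore a clean, short limit-and-scaling computation.
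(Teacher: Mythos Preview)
Your argument is correct. It differs from the paper's proof, which proceeds by contradiction: assuming $x\in\cl[K,L]$ and $\lambda_0 x\notin\cl[K,L]$ for some $\lambda_0>1$, the paper separates $\lambda_0 x$ from $\cl[K,L]$ by a hyperplane $H_{u,\alpha}$, shows $\alpha<0$, then uses the pseudo-cone property of $K$ and $L$ individually to force $K,L\subset H^+_{u,0}$, whence $\cl[K,L]\subset H^+_{u,0}$, contradicting $\langle u,x\rangle<\lambda_0^{-1}\alpha<0$. Your direct approach is more elementary: it avoids the separation theorem entirely and relies only on the standard fact that the convex hull of two convex sets consists of two-term convex combinations, together with continuity of scalar multiplication. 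The paper's route is more geometric in flavor and mirrors the hyperplane arguments used elsewhere in Section~\ref{Sec:ProOfPC}, but your proof is shorter and exposes more transparently that the hypothesis $o\notin\cl[K,L]$ is needed only to exclude the origin, not for the scaling property itself.
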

\begin{proof}
    We prove it by contradiction. Suppose
    $$x\in\cl[K,L],~\lambda_0 x\notin\cl[K,L]\text{ with }\lambda_0>1.$$
    Choose a hyperplane $H_{u,\alpha}$ such that $\lambda_0 x\in\interior H^-_{u,\alpha}$ and $\cl[K,L]\subset\interior H^+_{u,\alpha}$. It is clear that $\alpha<\langle u,x\rangle<\lambda_0^{-1}\alpha$,
    which implies $\alpha<0$.
    
    We claim that $K,L\subset H^+_{u,0}$. In fact, if $y\in K$ such that $\langle u,y\rangle<0$, then $\langle u,\lambda y\rangle<\alpha$ for large enough $\lambda>1$. Thus, $\lambda y\in K\cap H^-_{u,\alpha}=\emptyset$, a contradiction. Therefore, $K\subset H^+_{u,0}$. Similarly, $L\subset H^+_{u,0}$. 
    
    Now $x \in \cl[K,L]\subset H^+_{u,0}$, which contradicts $x\in\interior H^-_{u,\lambda_0^{-1}\alpha}$.
\end{proof}

%\subsection{The duality of \texorpdfstring{$\PC$}{n-dimensional pseudo-cone lattice}}\label{Subsec:Duality}

The new duality of $\PC$ is investigated below. First, we show that the operator $\ast$ characterize closed pseudo-cones.
	
\begin{Lem}\label{Lem:A*IsPC}
	If $K$ is a nonempty closed convex set not containing the origin, then $K^\ast$ is a closed pseudo-cone.
\end{Lem}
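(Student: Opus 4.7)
The plan is to verify the four defining properties of a closed pseudo-cone for $K^\ast$ in turn: nonemptiness, closedness, convexity, the exclusion of the origin, and the scaling condition $\lambda x \in K^\ast$ for $x \in K^\ast$ and $\lambda \geq 1$.

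First I would handle the easy properties. Writing $K^\ast = \bigcap_{y \in K}\{x\in\R^n : \langle x, y\rangle \leq -1\}$ realises it as an intersection of closed half-spaces, so $K^\ast$ is closed and convex. The origin is excluded because for any $y \in K$ one has $\langle o, y\rangle = 0 > -1$, so $o$ fails the defining inequality (using that $K \neq \emptyset$). For the scaling property, suppose $x \in K^\ast$ and $\lambda \geq 1$. For each $y \in K$, $\langle x, y\rangle \leq -1 < 0$, hence $\langle \lambda x, y\rangle = \lambda\langle x, y\rangle \leq -\lambda \leq -1$, which shows $\lambda x \in K^\ast$.

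The step that requires the assumption $o \notin K$ is nonemptiness. Here I would use the separation theorem: since $K$ is a nonempty closed convex set and $o \notin K$, there exist $u \in \R^n\setminus\{o\}$ and $\alpha > 0$ with $\langle u, y\rangle \geq \alpha$ for every $y \in K$. Setting $x_0 = -u/\alpha$ gives $\langle x_0, y\rangle \leq -1$ for all $y \in K$, so $x_0 \in K^\ast$.

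The main (and essentially only) obstacle is the nonemptiness step, which is where the hypothesis $o\notin K$ really gets used; it is also the place where the cutoff value $-1$ in the definition of $\ast$ (as opposed to $0$ for polar cones or $1$ for polar sets) matters, since it is the strict separation of $o$ from $K$ by a positive constant that allows one to rescale the separating functional into $K^\ast$. Everything else is a routine verification straight from the definition.
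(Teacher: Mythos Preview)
Your proof is correct and follows essentially the same route as the paper: both represent $K^\ast$ as an intersection of closed half-spaces to get closedness and convexity, verify the scaling property and $o\notin K^\ast$ directly from the defining inequality, and obtain nonemptiness from the separation of $o$ and $K$ (the paper writes $K\subset H^-_{u,\alpha}$ with $\alpha<0$ and takes $u/(-\alpha)\in K^\ast$, which is your $x_0=-u/\alpha$ up to a sign convention).
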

\begin{proof}
	For every $y\in K$, it is easy to see that the half space $\{x\in\R^n:\langle x,y\rangle\leq -1\}$ is closed and convex. 
	Observe that
	\begin{equation*}
	    K^\ast=\bigcap_{y\in K}\{x\in\R^n:\langle x,y\rangle\leq -1\},
	\end{equation*}
	which implies $K^\ast$ is also closed and convex.
	Notice that $o \notin K$.
	There is a $u \in \mathbb{S}^{n-1}$ and $\alpha <0$ such that $K \subset H_{u,\alpha}^-$.
	It turns out $u/(-\alpha) \in K^\ast$ which implies $K^\ast \neq \emptyset$.
	Let $z\in K^\ast,w\in K$ and $\lambda>1$. Since $\langle \lambda z,w\rangle\leq-\lambda<-1$, we have $\lambda z\in K^\ast$. Together with the clear fact $o\notin K^\ast$, the proof is completed.
\end{proof}	

For a closed pseudo-cone $K$, the following lemma %explores some relationship between $\circ$ and $\ast$, i.e. 
and Lemma \ref{Lem:ClR+K=RecK} give $(\rec K)^\circ=\rec(K^\ast)$.
	
\begin{Lem}\label{Lem:SwapCLR&*}
If $K\subset\R^n$ is a closed pseudo-cone, then $\cl(\R_+(K^\ast))=(\cl(\R_+K))^\circ$.
\end{Lem}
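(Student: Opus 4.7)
The plan is to prove the two inclusions $\cl(\R_+ K^\ast) \subseteq (\cl \R_+ K)^\circ$ and $(\cl \R_+ K)^\circ \subseteq \cl(\R_+ K^\ast)$ separately, using the definition of $K^\ast$ directly on the easy side and a perturbation argument on the hard side.

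For the inclusion $\cl(\R_+ K^\ast) \subseteq (\cl \R_+ K)^\circ$, I would first observe that taking polars is invariant under closures, so $(\R_+ K)^\circ = (\cl \R_+ K)^\circ$, and this set is already closed. So it suffices to show $\R_+ K^\ast \subseteq (\R_+ K)^\circ$. Given $z = \lambda x$ with $\lambda \geq 0$ and $x \in K^\ast$, and any $y \in K$, we have $\langle z, y\rangle = \lambda \langle x, y\rangle \leq -\lambda \leq 0$; extending by non-negative scaling gives $\langle z, y\rangle \leq 0$ for every $y \in \R_+ K$. This half is completely routine.

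The reverse inclusion is the heart of the matter. Take $z \in (\cl \R_+ K)^\circ$, so $\langle z, y\rangle \leq 0$ for all $y \in K$, but possibly with equality on some directions, which is why $z$ itself need not lie in any scalar multiple of $K^\ast$. The plan is to perturb $z$ in a direction that makes the inner product strictly negative and uniformly bounded away from zero on $K$. Since $K$ is a closed convex set with $o \notin K$, strict separation yields a unit vector $u$ and $\delta > 0$ with $\langle u, y\rangle \leq -\delta$ for all $y \in K$ (note such a separating $u$ must satisfy $\langle u, \rec K\rangle \leq 0$, so $u \in (\rec K)^\circ = (\cl \R_+ K)^\circ$ by Lemma \ref{Lem:ClR+K=RecK}, although we don't actually need this for the argument). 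Then for any $\varepsilon > 0$ and any $y \in K$,
\begin{equation*}
    \langle z + \varepsilon u, y\rangle \leq 0 + \varepsilon(-\delta) = -\varepsilon \delta,
\end{equation*}
so the vector $\frac{1}{\varepsilon \delta}(z + \varepsilon u)$ lies in $K^\ast$, which means $z + \varepsilon u \in \varepsilon\delta\, K^\ast \subseteq \R_+ K^\ast$. Letting $\varepsilon \to 0^+$ places $z$ in $\cl(\R_+ K^\ast)$, completing the inclusion.

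The only genuine obstacle is producing the perturbation direction $u$ with a uniform gap $\delta > 0$; once this is in hand, the rest is an elementary estimate. Since $K$ is closed, convex, and avoids the origin, the existence of $u$ is immediate from the strict separation theorem, so the argument is short. Otherwise the calculation is bookkeeping: verify that $\R_+ K$ is convex, recall that polar cones are unchanged by closure, and combine the two inclusions.
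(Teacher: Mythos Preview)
Your proof is correct and follows essentially the same route as the paper: both handle the easy inclusion by direct computation and obtain the reverse inclusion by perturbing $z$ toward a direction $u$ that strictly separates $o$ from $K$, then passing to the limit. The only cosmetic difference is that the paper first splits off the case $\sup_{y\in K}\langle z,y\rangle<0$ (where $z$ already lies in $\R_+K^\ast$) and uses convex combinations $(1-\lambda)z+\lambda u$ in the remaining case, whereas your additive perturbation $z+\varepsilon u$ handles both cases uniformly and is slightly more streamlined.
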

	
\begin{proof}
%	For $\lambda y\in\R_+(K^\ast)\backslash\{o\}$ with $\lambda>0$ and $y\in K^\ast$, it is clear $\langle y, K\rangle\leq -1$, which implies $\langle\lambda y,\cl(\R_+ K)\rangle \leq 0$.
%By Lemma \ref{Lem:ClR+K=RecK}, one gets 
%	Since $\cl(\R_+K)$ is a closed convex cone, $\lambda y\in(\cl(\R_+K))^\circ$. Together with $o\in (\cl(\R_+K))^\circ$, it is valid that $\R_+(K^\ast)\subset(\cl(\R_+K))^\ast$. Since $(\cl(\R_+K))^\circ$ is closed, we have $\cl(\R_+(K^\ast)) \subset (\cl(\R_+K))^\circ$.

For $\lambda y\in\R_+(K^\ast)$ with $\lambda \ge 0$ and $y\in K^\ast$, it is clear $\langle y, K\rangle\leq -1$, which implies 
$$\langle\lambda y,\cl(\R_+ K)\rangle \leq 0.$$
Notice that $\cl(\R_+K)$ is a closed convex cone. 
It turns out $\cl(\R_+(K^\ast)) \subset (\cl(\R_+K))^\circ$ since $(\cl(\R_+K))^\circ$ is closed.

Let $x\in(\cl(\R_+K))^\circ$, then $\langle x,\cl(\R_+K)\rangle\leq0$, so $\langle x,K\rangle\leq0$. 
Write 
$$\eta=-\sup_{y \in K}\langle x,y\rangle,$$ 
which is nonnegative. 
If $\eta>0$, then $\sup_{y \in K}\langle\eta^{-1}x,y\rangle=-1$, which deduces that $\eta^{-1}x\in K^\ast$, so $x\in\R_+(K^\ast)$. 
If $\eta=0$, since $o\notin K$, there is a $u\in\mathbb{S}^{n-1}$ such that $\sup_{y \in K}\langle u,y\rangle=-\beta$ for some $\beta>0$.
We have
\begin{equation*}
\sup_{y \in K}\langle (1-\lambda)x+\lambda u,y\rangle\leq (1-\lambda)\sup_{y \in K}\langle x,y\rangle+\lambda\sup_{y \in K}\langle u,y\rangle=-\lambda\beta,\quad\lambda\in (0,1],
\end{equation*}
which implies that $\sup_{y \in K}\langle z,y\rangle<0$ for $z\in(x,u]$. 
By the conclusion of the case $\eta>0$, we get $(x,u]\subset\R_+(K^\ast)$, and then $x\in\cl(\R_+(K^\ast))$, which completes the proof.
\end{proof}

Next, we show that $\ast$ is an involution on the class of closed pseudo-cones , i.e. $K^{\ast\ast}=K$ for all closed pseudo-cones $K$.

\begin{Thm}\label{Thm:Duality}
	If $K$ is a closed pseudo-cone, then $K^{\ast\ast}=K$.
	Moreover, if $K$ is a closed convex set not containing the origin, then $K^{\ast\ast\ast}=K^\ast$.
\end{Thm}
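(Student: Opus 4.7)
The plan is to establish the involution $K^{\ast\ast}=K$ first for closed pseudo-cones, and then deduce the \emph{moreover} clause as an immediate corollary via Lemma~\ref{Lem:A*IsPC}.

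The inclusion $K\subset K^{\ast\ast}$ is essentially a tautology: for $x\in K$, every $z\in K^\ast$ satisfies $\langle z,x\rangle\leq -1$ by definition of $K^\ast$, so rewriting this as $\langle x,z\rangle\leq -1$ for all $z\in K^\ast$ places $x$ in $K^{\ast\ast}$.

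The reverse inclusion $K^{\ast\ast}\subset K$ is the heart of the proof and reduces to a separation statement: given $x\notin K$, I must produce some $z\in K^\ast$ with $\langle x,z\rangle>-1$. The crucial observation is that the pseudo-cone hypothesis forces the segment $[o,x]$ to be disjoint from $K$: if $\lambda x\in K$ for some $\lambda\in(0,1]$, then $1/\lambda\geq 1$ together with the pseudo-cone property yields $x=(1/\lambda)(\lambda x)\in K$, contradicting $x\notin K$, while $\lambda=0$ is excluded by $o\notin K$. Strong separation of the compact convex set $[o,x]$ from the closed convex set $K$ then yields $u\in\R^n\setminus\{o\}$ and $\alpha\in\R$ with $h_K(u)<\alpha<\min\{0,\langle u,x\rangle\}$. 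In particular $\alpha<0$, so setting $z:=u/(-\alpha)$ gives $h_K(z)<-1$, hence $z\in K^\ast$, while $\langle x,z\rangle=\langle x,u\rangle/(-\alpha)>\alpha/(-\alpha)=-1$, contradicting $x\in K^{\ast\ast}$.

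For the \emph{moreover} clause, Lemma~\ref{Lem:A*IsPC} guarantees that $K^\ast$ is itself a closed pseudo-cone whenever $K$ is a nonempty closed convex set not containing $o$; applying the already-established involution to $K^\ast$ gives $K^{\ast\ast\ast}=(K^\ast)^{\ast\ast}=K^\ast$. The main technical obstacle is the separation step in the first part: what makes the argument work is precisely the pseudo-cone property, which guarantees both that $[o,x]\cap K=\emptyset$ and hence strict separation with $\alpha<0$, thereby allowing the rescaling $z=u/(-\alpha)$ to land in $K^\ast$ and produce the desired contradiction.
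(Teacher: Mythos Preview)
Your proof is correct and considerably more direct than the paper's. The paper also begins with the easy inclusion $K\subset K^{\ast\ast}$, but for the reverse inclusion it argues by contradiction in a more elaborate way: it first invokes Lemma~\ref{Lem:SwapCLR&*} to deduce that any $x\in K^{\ast\ast}\setminus K$ must lie in $\cl(\R_+K)$, then takes the metric projection $p_K(x)$, sets $u=(x-p_K(x))/|x-p_K(x)|$, and splits into three cases according to the sign of $\langle p_K(x),u\rangle$, each handled separately. Your argument bypasses all of this by exploiting a cleaner consequence of the pseudo-cone property---namely that $x\notin K$ forces the entire segment $[o,x]$ to miss $K$---and then separates this compact segment from $K$ in one stroke, automatically producing a separating functional with negative threshold that rescales into $K^\ast$. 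What the paper's route buys is essentially nothing for this particular theorem; Lemma~\ref{Lem:SwapCLR&*} is of independent interest, but the case analysis via the metric projection is strictly more work than your separation argument. Both proofs derive the \emph{moreover} clause identically from Lemma~\ref{Lem:A*IsPC}.
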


\begin{proof}
If $x\in K$, for any $y\in K^\ast$, we have $\langle x,y\rangle\leq-1$, i.e. $x\in K^{\ast\ast}$.
Therefore, $K \subset K^{\ast\ast}$.

Let $K$ be a closed pseudo-cone.
Suppose that there is a $x\in K^{\ast\ast} \setminus K$. 
%First, we show $x\in\cl(\R_+K)$. 
%Observe that 
%    \begin{equation*}
%        x\in K^{\ast\ast}\Longrightarrow\langle x,K^\ast\rangle\leq-1 
%        \Longrightarrow
%       \langle x,\cl(\R_+(K^\ast))\rangle\leq0.
%    \end{equation*}
Observe that $\langle x,K^\ast\rangle\leq-1$. 
It turns out $\langle x,\cl(\R_+(K^\ast))\rangle\leq0$.
Together with Lemma \ref{Lem:SwapCLR&*},  
$$x \in (\cl(\R_+(K^\ast)))^\circ=\cl(\R_+K).$$
We write
\begin{equation*}
	u=\frac{x-p_K(x)}{|x-p_K(x)|}.
\end{equation*}
It is clear that $K\subset H_{u,\langle p_K(x),u\rangle}^-$ and $x\in\interior H^+_{u,\langle p_K(x),u\rangle}$.
		
If $\langle p_K(x),u\rangle>0$, then $o\in\interior H^-_{u,\langle p_K(x),u\rangle}$, which gives
\begin{equation*}
	(1,+\infty)p_K(x)\subset\interior H^+_{u,\langle p_K(x),u\rangle}.
\end{equation*}
It contradicts $(1,+\infty)p_K(x) \subset K$.
		
If $\langle p_K(x),u\rangle=0$, recalling that $x\in\cl(\R_+K)$,
then we can choose $z \in \relint(\R_+K)$ such that $z\in \interior H^+_{u,\langle p_K(x),u\rangle}$.
Therefore, $(0,+\infty)z \subset \interior H^+_{u,\langle p_K(x),u\rangle}$, a contradiction to $(0,+\infty)z \cap K \neq \emptyset$.
		
If $\langle p_K(x),u\rangle<0$, then $\langle K,u\rangle\leq\langle p_K(x),u\rangle<0$, which deduces that $u\in\R_+(K^\ast)\backslash\{o\}$. So, $0>\rho_{K^\ast}(u)>-\infty$.
Observe that $\langle x,-\rho_{K^\ast}(u)u\rangle \leq -1$ since $x \in K^{\ast\ast}$ and $-\rho_{K^\ast}(u)u \in K^\ast$.
Together with $K\subset H_{u,\langle p_K(x),u\rangle}^-$, we have
\begin{equation*}
    \begin{array}{r@{~}l}
    \langle K,-\rho_{K^\ast}(u)u\rangle 
    &\leq \langle p_K(x),-\rho_{K^\ast}(u)u\rangle \\
    &= -\langle x-p_K(x),-\rho_{K^\ast}(u)u\rangle+\langle x,-\rho_{K^\ast}(u)u\rangle\\
    %&\leq -|x-p_K(x)|\langle u,\rho_{K^\ast}(u)u\rangle-1\\
    &\le\rho_{K^\ast}(u)|x-p_K(x)| -1 \\
    &<-1,
\end{array}
\end{equation*}
which implies that $\langle K,-\lambda\rho_{K^\ast}(u)u\rangle\leq-1$ for some $\lambda\in(0,1)$. 
Thus, $-\lambda\rho_{K^\ast}(u)u\in K^\ast$.
It turns out $-\lambda\rho_{K^\ast}(u)\geq -\rho_{K^\ast}(u)$, i.e. $\lambda\geq 1$, a contradiction. It completes the proof of the first statement.
		
The second statement follows directly from the first statement and Lemma \ref{Lem:A*IsPC}.
\end{proof}
%between
Next theorem shows the connections among the support function, the radial function, and $*$ of pseudo-cones.

\begin{Thm}
Let $K$ be a closed pseudo-cone and $u\in\R^n$. 
If $h_{K^*}(u)\geq0$, then $u\notin\R_+K\setminus\{o\}$.
If $h_{K^*}(u)<0$, then $u\in\R_+K\setminus\{o\}$ and $$\rho_K(u)=\frac{1}{h_{K^*}(u)}.$$ 
\end{Thm}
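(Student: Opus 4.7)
The plan is to reduce both claims to the bipolarity $K = K^{\ast\ast}$ (Theorem \ref{Thm:Duality}), via the fact that, for $\lambda \ge 0$, membership $\lambda u \in K^{\ast\ast}$ is equivalent to the single inequality $\lambda\, h_{K^\ast}(u) \le -1$. Writing $\alpha = h_{K^\ast}(u)$ for brevity, this turns the problem into bookkeeping on the sign and magnitude of $\alpha$.

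For the first statement I would argue by contrapositive. If $u \in \R_+ K \setminus \{o\}$, then $\mu := -\rho_K(u) > 0$ and $\mu u \in K$, so the defining inequality of $K^\ast$ forces $\mu\langle u, x\rangle = \langle \mu u, x\rangle \le -1$, i.e.\ $\langle u, x\rangle \le -1/\mu$ for every $x \in K^\ast$. Taking the supremum gives $\alpha \le -1/\mu < 0$, contradicting $\alpha \ge 0$.

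For the second statement, assume $\alpha < 0$. For each $\lambda \ge 0$ one has $\sup_{x\in K^\ast}\langle \lambda u, x\rangle = \lambda\alpha$, so using $K^{\ast\ast}=K$ we see that $\lambda u \in K$ if and only if $\lambda\alpha \le -1$, equivalently $\lambda \ge -1/\alpha$ (the sign flip uses $\alpha < 0$). Taking $\lambda = -1/\alpha > 0$ already exhibits $u \in \R_+ K$, and $u \neq o$ because $h_{K^\ast}(o) = 0 \neq \alpha$. Moreover, the above description shows $\{\lambda \ge 0 : \lambda u \in K\} = [-1/\alpha, +\infty)$, whose minimum is $-1/\alpha$; hence $\rho_K(u) = -(-1/\alpha) = 1/\alpha = 1/h_{K^\ast}(u)$.

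The only substantive input is the involution property $K^{\ast\ast} = K$, already established as Theorem \ref{Thm:Duality}; everything else is a direct reading of the definitions. Accordingly I do not expect any real obstacle beyond careful handling of signs when multiplying or dividing by the negative quantity $\alpha$, and separating the edge case $\lambda=0$ (which fails since $o \notin K$).
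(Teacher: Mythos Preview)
Your proof is correct and follows essentially the same route as the paper's: the first part is the same contrapositive/contradiction argument (pick a positive multiple of $u$ in $K$ and read off the inequality for $h_{K^\ast}(u)$), and the second part, like the paper, uses $K=K^{\ast\ast}$ to translate $\lambda u\in K$ into the inequality $\lambda\,h_{K^\ast}(u)\le -1$ and then reads off both $u\in\R_+K\setminus\{o\}$ and the value of $\rho_K(u)$.
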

\begin{proof}
First, we consider the case $h_{K^*}(u)\geq 0$. 
If $u=o$, there is nothing to proof. 
Assume $u\neq o$. We prove it by contradiction. 
Suppose $u \in\R_+K\setminus\{o\}$. One has $\lambda_0u\in K$ for some $\lambda_0>0$.
Therefore, 
%Theorem \ref{Thm:Duality} implies $\lambda_0 u\in K^{**}$, and then
\begin{gather*}
\langle u,x\rangle\leq -\lambda_0^{-1}\text{ for all }x\in K^*.
\end{gather*}
It follows that $h_{K^*}(u)<0$, a contradiction.
    
Next, we consider the case $h_{K^*}(u)<0$. It is clear that $u\neq o$. One has
\begin{gather*}
    \langle u,x\rangle\leq h_{K^*}(u)\text{ for all }x\in K^*,
\end{gather*}
which implies that
\begin{gather*}
\left\langle \frac{u}{-h_{K^*}(u)},x \right\rangle\leq -1 \text{ for all }x\in K^*.
\end{gather*}
Together with Theorem \ref{Thm:Duality},  $\frac{u}{-h_{K^*}(u)}\in K^{**}=K$.  Hence, $u\in\R_+K\setminus\{o\}$.
It turns out that
\begin{gather*}
\begin{array}{r@{~}l}
\rho_K(u)&=-\min\{\lambda>0 : \lambda u\in K\}\\
&=-\min\{\lambda>0:\lambda u\in K^{**}\}\\
&=-\min\{\lambda>0:\langle\lambda u,x\rangle\leq -1\text{ for all }x\in K^*\}\\
&=-\min\{\lambda>0:\langle u,x\rangle\leq -\lambda^{-1}\text{ for all }x\in K^*\}\\
&=\left(\max\{\langle u,x\rangle:x\in K^*\}\right)^{-1}\\
&=\frac{1}{h_{K^*}(u)},\\
\end{array}
\end{gather*}
which completes the case $h_{K^*}(u)<0$.
\end{proof}

In the last part of this section, we study the lattice structure of $\PC$.
%is a lattice with respect to the set inclusion. We give the following lemma.

\begin{Lem}\label{Lem:PC^nIsLattice}
The partially ordered set $(\PC,\subset)$ is a lattice with
\begin{gather*}\label{eq:03-12-1056}
K\land L=K\cap L,
\end{gather*}
and
\begin{gather}\label{eq:313-1}
K\lor L =\begin{cases}
\cl[K, L], & o \notin \cl [K,L],\\
\R^n, & o\in \cl[K,L].
\end{cases}
\end{gather}
\end{Lem}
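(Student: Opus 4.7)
The plan is to verify the two displayed formulas both lie in $\PC$ and satisfy the universal properties of infimum and supremum under inclusion, splitting by cases according to whether $K$ or $L$ equals one of the trivial elements $\emptyset, \R^n$. The only genuine geometric input needed is Lemma \ref{Lem:SupOfPC}; the rest is bookkeeping.

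First I would handle the meet. If either $K$ or $L$ equals $\emptyset$, the intersection is $\emptyset \in \PC$; if one equals $\R^n$, the intersection is the other operand, which is in $\PC$. In the remaining case $K,L$ are both closed pseudo-cones, so $K\cap L$ is closed and convex and does not contain $o$. For any $x\in K\cap L$ and $\lambda\geq 1$, scaling closure in each of $K,L$ separately gives $\lambda x\in K\cap L$, so $K\cap L$ is either empty or a closed pseudo-cone, hence in $\PC$. As $K\cap L\subset K,L$ and any $M\in\PC$ with $M\subset K$ and $M\subset L$ satisfies $M\subset K\cap L$, it is the infimum.

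Next I would handle the join. Trivial cases first: if $K=\R^n$ or $L=\R^n$, then $o\in \cl[K,L]=\R^n$ and the formula gives $K\lor L=\R^n$, obviously the supremum; if $K=\emptyset$, then $\cl[K,L]$ equals $L$ (or $\emptyset$ if also $L=\emptyset$), and the formula returns the correct supremum in each subcase, and similarly if $L=\emptyset$. Now suppose $K,L$ are both closed pseudo-cones. If $o\notin \cl[K,L]$, Lemma \ref{Lem:SupOfPC} says $\cl[K,L]$ is a closed pseudo-cone, hence in $\PC$. Since $\cl[K,L]$ is the smallest closed convex set containing $K\cup L$, any $M\in\PC$ above $K$ and $L$ (which is closed and convex) must contain it; thus $\cl[K,L]$ is the supremum. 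If instead $o\in\cl[K,L]$, take any upper bound $M\in\PC$ for $\{K,L\}$. Then $M$ is closed, convex, and contains $K\cup L$, hence contains $\cl[K,L]$ and in particular contains $o$. But no closed pseudo-cone contains $o$, so $M=\R^n$. This confirms $\R^n=K\lor L$ is the least upper bound.

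The main subtlety is precisely this last step: when $o\in\cl[K,L]$, one must verify that no proper pseudo-cone can serve as an upper bound, forcing $\R^n$ to be the supremum. This is where the decision to enlarge the class of closed pseudo-cones by $\emptyset$ and $\R^n$ pays off, turning $(\PC,\subset)$ into a lattice rather than just a bounded poset with occasionally undefined joins.
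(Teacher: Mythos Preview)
Your proof is correct and follows essentially the same approach as the paper's: both identify $K\cap L$ as the infimum by the obvious universal property, and for the supremum both invoke Lemma~\ref{Lem:SupOfPC} when $o\notin\cl[K,L]$ and observe that any upper bound in $\PC$ containing $o$ must be $\R^n$ when $o\in\cl[K,L]$. Your version is simply more explicit about the trivial cases and about verifying $K\cap L\in\PC$, which the paper leaves implicit.
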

	
\begin{proof}
Let $K,L\in\PC$. Since $\emptyset\subset K,L\subset\R^n$, we get the fact that the pair $\{K,L\}$ has both upper bounds and lower bounds.

If $M$ is a lower bound of $\{K,L\}$, i.e. $M\subset K,L$, then $M\subset K\cap L$. Together with the fact that $K\cap L$ is also a lower bound of $\{K,L\}$, we get $K\land L=K\cap L$.

If $N$ is an upper bound of  $\{K,L\}$, then we have $K,L\subset N$, which implies $\cl [K,L]\subset N$. 
If $o\notin\cl[K,L]$, then Lemma \ref{Lem:SupOfPC} shows that $\cl[K,L]$ is an upper bound of $\{K,L\}$. 
If $o\in\cl[K,L]$, then $N \in \PC$ must be $\R^n$, since the only element of $\PC$ containing $o$ is $\R^n$ by definition.
Therefore, we get the desired result.
\end{proof}

%The next lemma demonstrates the duality $\ast$ interchanges $\lor$ and $\land$ in $\PC$.

\begin{Lem}\label{Lem:DualityOfSupInf}
If $K,L\in\PC$, then
\begin{gather*}
	K^\ast\land L^\ast = (K \lor L)^\ast,\\
	K^\ast\lor L^\ast = (K\land L)^\ast.
\end{gather*}
\end{Lem}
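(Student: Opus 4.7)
The plan is to invoke Lemma~\ref{Lem:DualForLattice} with $\tau = \ast$, which reduces the present lemma to three facts about $\ast$ on $\PC$: (a) it is a bijection of $\PC$, (b) it is order-reversing in one direction, and (c) consequently, $K \subset L \Leftrightarrow L^\ast \subset K^\ast$ for all $K, L \in \PC$. Once (c) is established, which is precisely property~(i) of Lemma~\ref{Lem:DualForLattice}, the two identities in the present lemma are exactly its properties~(ii) and~(iii), and we are done.

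For (a), Lemma~\ref{Lem:A*IsPC} shows $K^\ast$ is a closed pseudo-cone whenever $K$ is a nonempty closed convex set not containing the origin, and the conventions $\emptyset^\ast = \R^n$ and $(\R^n)^\ast = \emptyset$ account for the remaining elements of $\PC$; Theorem~\ref{Thm:Duality} yields $K^{\ast\ast} = K$ on closed pseudo-cones, and the trivial cases are immediate, so $\ast$ is an involution and hence a bijection of $\PC$. For (b), if $K \subset L$ are proper pseudo-cones, then any $x$ satisfying $\langle x, y \rangle \leq -1$ for all $y \in L$ satisfies the same inequality for all $y \in K$, so $L^\ast \subset K^\ast$; the boundary cases in which one of $K, L$ is $\emptyset$ or $\R^n$ are immediate from the conventions. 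For (c), the backward direction follows by applying (b) to $L^\ast$ and $K^\ast$ and invoking the involution.

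The most delicate bookkeeping concerns the interaction of $\ast$ with the trivial elements $\emptyset$ and $\R^n$ of $\PC$, especially the case $o \in \cl[K,L]$ in which $K \lor L = \R^n$ and so $(K \lor L)^\ast = \emptyset$. Under the Lemma~\ref{Lem:DualForLattice} approach this is handled uniformly once the biconditional is checked; alternatively, one could argue directly that $K^\ast \cap L^\ast = \emptyset$ in this case, because any $x$ in the intersection would yield $\langle x, y \rangle \leq -1$ for all $y$ in the convex hull $[K,L]$, and then by closure together with $o \in \cl[K,L]$ one would obtain $0 = \langle x, o \rangle \leq -1$, a contradiction. Beyond this routine case-checking I anticipate no serious obstacle, since the substantive content (involution and order-reversal) is already encoded in Theorem~\ref{Thm:Duality} and the definition of $\ast$.
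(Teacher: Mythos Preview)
Your proposal is correct and follows essentially the same approach as the paper: establish the biconditional $K\subset L \Leftrightarrow L^\ast\subset K^\ast$ via the definition of $\ast$ and the involution property from Theorem~\ref{Thm:Duality}, then invoke Lemma~\ref{Lem:DualForLattice}. You are more explicit than the paper about verifying the bijection hypothesis of Lemma~\ref{Lem:DualForLattice} and about the trivial elements $\emptyset$ and $\R^n$, but the argument is the same.
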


\begin{proof}
If $K\subset L$, then, by the definition of $\ast$, it is clear that $L^\ast\subset K^\ast$. Conversely, if  $L^\ast\subset K^\ast$, then, by the definition of $\ast$ and Theorem \ref{Thm:Duality}, it is clear that $K=(K^\ast)^\ast \subset  (L^\ast)^\ast=L$. 
The conclusion then follows from Lemma \ref{Lem:DualForLattice} and Theorem \ref{Thm:Duality} directly. 
\end{proof}

By the definition of $\ast$, the following lemma is trivial.
\begin{Lem}
If $K \in \PC$ and $g \in \mathrm{GL}(n)$, then $(gK)^\ast=g^{-t}K^\ast$.
\end{Lem}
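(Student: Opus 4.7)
The plan is to unravel the definition of $\ast$ and exploit the adjoint identity $\langle x, gy \rangle = \langle g^t x, y \rangle$, which is the only property of $\mathrm{GL}(n)$ we need. First I would dispose of the two boundary cases in $\PC$: if $K = \emptyset$, then $gK = \emptyset$, so both sides equal $\R^n$ (since $g^{-t}$ maps $\R^n$ to $\R^n$), and if $K = \R^n$, then $gK = \R^n$ by invertibility of $g$, so both sides equal $\emptyset$. This reduces the problem to the case where $K$ is a (nonempty) closed pseudo-cone, and by Lemma \ref{Lem:A*IsPC} together with the fact that $gK$ is again a nonempty closed convex set not containing the origin, both sides are automatically closed pseudo-cones.

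Next I would compute directly:
\begin{equation*}
(gK)^\ast = \bigl\{x \in \R^n : \langle x, y \rangle \leq -1 \text{ for all } y \in gK \bigr\}
         = \bigl\{x \in \R^n : \langle x, gz \rangle \leq -1 \text{ for all } z \in K \bigr\}.
\end{equation*}
Applying $\langle x, gz \rangle = \langle g^t x, z \rangle$, the right-hand side becomes $\{x \in \R^n : g^t x \in K^\ast\}$, which is exactly $(g^t)^{-1} K^\ast = g^{-t} K^\ast$.

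There is no real obstacle here — the argument is a one-line change of variables — so the main thing to be careful about is only the bookkeeping at the boundary of the lattice (the $\emptyset$ and $\R^n$ cases, which the authors extended $\ast$ to by convention) and making sure the notation $g^{-t}$ is the inverse of the transpose, so that the adjoint identity is applied in the correct direction.
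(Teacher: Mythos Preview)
Your proof is correct and is exactly the direct-from-definition computation the paper has in mind; the paper itself does not even write out a proof, noting only that ``by the definition of $\ast$, the following lemma is trivial.'' Your extra care with the boundary cases $K=\emptyset$ and $K=\R^n$ is appropriate and harmless.
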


%By the characterization of endomorphisms of the lattice $\PC$, which is the main result of Section \ref{Sec:Endo}, we prove Theorem \ref{Thm:MainThm_Dual} as below.

% the following, we show that Theorem \ref{Thm:MainThm_Dual} and Theorem \ref{Thm:MainThm_Identity} are equivalent.
	
\begin{proof}[\textbf{The equivalence of Theorem \ref{Thm:MainThm_Dual} and Theorem \ref{Thm:MainThm_Identity}.}]
Let $\varphi(K)=\tau(K^\ast)$ for any $K \in \PC$ (by Theorem \ref{Thm:Duality}, it is well-defined). 
By Lemma \ref{Lem:DualityOfSupInf},
that $\varphi$ is an endomorphism is equivalent to that $\tau$ interchanges $\vee$ and $\wedge$.
The proof is then completed by Theorem \ref{Thm:Duality}. 
%it is clear that $\varphi$ is constant or $\varphi(K)=g(K)$ with some $g\in\mathrm{GL}(n)$. Since Theorem \ref{Thm:Duality}, we complete the proof.
\end{proof}
	
\section{Endomorphisms}\label{Sec:Endo}
In this section, we prove Theorem \ref{Thm:MainThm_Identity} including $n=1$.
In the rest of this paper, we always assume that $\varphi$ is a non-constant endomorphism of $\PC$.

%\begin{Thm}\label{Thm:MainTheoremOfEnd}
%Let $\varphi$ be an endomorphism of %$(\PC,\lor,\land)$.

%If $n\geq2$, then either $\varphi$ is %constant, or $\varphi\in GL(n)$.

%If $n=1$, then either
%\begin{gather*}
%	\varphi(\overline x)=
%	\begin{cases}
%		K, & x>0,\\
%		L, & x<0,
%	\end{cases}
%\end{gather*}
%for some $K,L\in\PC$, where $\{K,L\}=\{K\lor %L,K\land L\}$, with %$\varphi(\emptyset)=K\land L$ and %$\varphi(\overline 0)=K\lor L$;\\
%or
%\begin{gather}\label{eq:03-13-1014}
%	\varphi(\overline x)=\overline{f(x)}
%\end{gather}
%with $\varphi(\emptyset)=\emptyset$, where %$f$ is monotone with %$f(x)=0\Longleftrightarrow x=0$.
%\end{Thm}
	
We need the following lemma which is called the main theorem in affine geometry in \cite{MR3439685}. For details and proofs, see \cite{MR1138277,MR97769,MR1188083,MR1898158}.
	
\begin{Lem}\label{Lem:Affinity}
Let $n\geq 2$ and  $f\colon\R^n\rightarrow\R^n$ be an injection such that\\
\indent $(i)$ $f$ maps collinear points to collinear points,\\
\indent $(ii)$ $f(\R^n)$ is not contained in a line.\\
Then $f$ is a non-singular affinity.
\end{Lem}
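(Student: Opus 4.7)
The plan is to follow the classical proof of the fundamental theorem of affine geometry, with extra care because $f$ is only assumed injective, not bijective. After composing with the translation $x \mapsto x - f(o)$ I would reduce to the normalized case $g(o) = o$, and it would then suffice to prove that $g$ is a linear automorphism of $\R^n$. The strategy breaks into three stages: lift line preservation to plane preservation, extract additivity from parallelogram configurations, and identify the induced field endomorphism of $\R$ with the identity.

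For the first stage, I would prove that for every $2$-flat $\Pi \subset \R^n$ the image $g(\Pi)$ is contained in a $2$-flat $\Pi'$. Fixing two intersecting lines $L_1, L_2$ in $\Pi$ through a point $q$, hypothesis (i) places $g(L_1)$ and $g(L_2)$ inside well-defined lines $L_1'$ and $L_2'$ through $g(q)$. These must be distinct: otherwise $g(\Pi)$ collapses into a single line, and by varying $\Pi$ and iterating this pathology one would trap $g(\R^n)$ inside a line, contradicting (ii). Once $L_1', L_2'$ span a $2$-flat $\Pi'$, any point $p \in \Pi$ lies on a line in $\Pi$ meeting both $L_1$ and $L_2$, and (i) then forces $g(p) \in \Pi'$.

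The second stage extracts additivity via parallelograms. For linearly independent $x, y$, the midpoint $(x+y)/2$ is the intersection of the diagonals of the parallelogram with vertices $o, x, x+y, y$, whose four images lie in a common $2$-flat by the first stage. Injectivity together with collinearity preservation makes the two diagonal lines have distinct, intersecting images whose intersection point must be $g((x+y)/2)$, yielding $g((x+y)/2) = (g(x)+g(y))/2$. Iterating preserves all dyadic affine combinations, which combined with a line-based scalar argument upgrades to full additivity $g(x+y) = g(x)+g(y)$; the collinear case ($y$ on the line through $o$ and $x$) is handled separately by choosing an auxiliary direction and using (ii) to guarantee one exists.

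Finally, fixing a nonzero $v$, I would define $\sigma \colon \R \to \R$ by $g(\lambda v) = \sigma(\lambda) g(v)$. Additivity of $g$ yields $\sigma(\lambda + \mu) = \sigma(\lambda) + \sigma(\mu)$, while comparing $g$ along lines through $o$ and their parallel translates inside the $2$-flats from the first stage gives multiplicativity $\sigma(\lambda \mu) = \sigma(\lambda) \sigma(\mu)$; injectivity of $g$ plus betweenness preservation on each line makes $\sigma$ monotone. Since the only monotone field endomorphism of $\R$ is the identity, $g$ is $\R$-linear, hence an injective linear self-map of $\R^n$, hence a linear automorphism, and $f$ is a non-singular affinity. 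I expect the main obstacle to be the first stage: ensuring that $g$ sends planes into planes despite having only injectivity at hand. Hypothesis (ii) is essential precisely here, since without it $g$ could inject $\R^n$ into a single line while trivially respecting collinearity, and none of the subsequent parallelogram constructions would make sense.
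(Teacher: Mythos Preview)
The paper does not prove this lemma at all: it is quoted as the ``main theorem in affine geometry'' and the reader is referred to \cite{MR1138277,MR97769,MR1188083,MR1898158} for proofs. So there is no in-paper argument to compare against; your proposal is an independent proof of a result the authors import from the literature.

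Your outline follows the classical route (planes to planes, parallelogram additivity, field endomorphism of $\R$), and the broad strategy is sound. Two places deserve more care before this becomes a proof rather than a plan. First, in Stage~1 your collapsing argument is too quick: from $g(L_1),g(L_2)\subset L'$ you can indeed deduce $g(\Pi)\subset L'$ by the two-secant trick, but promoting ``some plane collapses'' to ``$g(\R^n)$ lies in a line'' needs an explicit propagation argument through adjacent planes, not just the phrase ``by varying $\Pi$ and iterating.'' Second, in Stage~2 you assert that the two image diagonals intersect, but for that you first need that $g$ sends parallel lines to parallel lines (inside the image $2$-flat); this is the step where injectivity really bites, and it should be isolated as its own claim before the parallelogram computation. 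Once those two points are nailed down, the rest (dyadic midpoints, additivity, and the monotone-endomorphism-of-$\R$ argument) is standard.
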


\begin{Lem}\label{Lem:0706-2239}
	Let $m\ge 1$ be an integer. If $x_1,x_2,\ldots,x_m\in\R^n$ with $o\notin[x_1,x_2,\ldots,x_m]$, then
	\[\overline{x_1}\lor\overline{x_2}\lor\cdots\lor\overline{x_m}=[1,+\infty)[x_1,x_2,\ldots,x_m].\]
\end{Lem}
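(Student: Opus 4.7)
The plan is to show that $P := [1,+\infty)[x_1,x_2,\ldots,x_m]$ is the least upper bound of the set $\{\overline{x_1},\overline{x_2},\ldots,\overline{x_m}\}$ in the lattice $\PC$. Since $\lor$ is associative and, by Lemma \ref{Lem:PC^nIsLattice}, returns the supremum, this will immediately yield the claimed identity.

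First I would verify that $P\in\PC$, that is, that $P$ is a closed pseudo-cone. Convexity is immediate after rewriting a convex combination $t\lambda_1 y_1 + (1-t)\lambda_2 y_2$ of elements of $P$ as $\mu z$, where $\mu = t\lambda_1+(1-t)\lambda_2 \ge 1$ and $z \in [x_1,\ldots,x_m]$ is the corresponding convex combination of $y_1, y_2$. The pseudo-cone property is obvious from $\mu\lambda\ge 1$, and $o \notin P$ follows from the hypothesis $o \notin [x_1,\ldots,x_m]$. Closedness needs a small argument: if $\lambda_k y_k \to z$ with $\lambda_k\ge 1$ and $y_k$ in the compact set $[x_1,\ldots,x_m]$, which is bounded away from $o$, then $y_k$ admits a subsequence $y_{k_j}\to y\ne o$, which forces $\lambda_{k_j}=|\lambda_{k_j}y_{k_j}|/|y_{k_j}|\to |z|/|y|=:\lambda\ge 1$, so $z=\lambda y\in P$.

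Next, $\overline{x_i} = [1,+\infty)\{x_i\} \subset P$ for every $i$, so $P$ is an upper bound of $\{\overline{x_1},\ldots,\overline{x_m}\}$ in $\PC$. For minimality, let $L \in \PC$ contain every $\overline{x_i}$. If $L = \R^n$, there is nothing to show; otherwise $L$ is a closed pseudo-cone containing each $x_i$, so by convexity $L \supset [x_1,\ldots,x_m]$, and since $L$ is a pseudo-cone while $o \notin [x_1,\ldots,x_m]$, one has $[1,+\infty)y \subset L$ for every $y \in [x_1,\ldots,x_m]$. Hence $P \subset L$.

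I do not expect a serious obstacle here; the only mildly delicate point is the closedness of $P$, which rests on the compactness of $[x_1,\ldots,x_m]$ and on its distance from $o$ being positive. This supremum-based approach also spares us from having to describe the convex hull $[\overline{x_1},\ldots,\overline{x_m}]$ by hand, although in fact the same bookkeeping shows $[\overline{x_1},\ldots,\overline{x_m}]=P$ without taking closure.
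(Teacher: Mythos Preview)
Your proof is correct and follows essentially the same route as the paper: both arguments reduce to showing that $P=[1,+\infty)[x_1,\ldots,x_m]$ is closed, and both do this via the compactness of $[x_1,\ldots,x_m]$ together with its positive distance from the origin to bound the scalars $\lambda_k$. The only cosmetic difference is that the paper identifies $P$ with $[\overline{x_1},\ldots,\overline{x_m}]$ and then applies the explicit closure formula for $\lor$, whereas you verify directly that $P\in\PC$ is the least upper bound; both framings are equivalent.
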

\begin{proof}
	Observe that $[1,+\infty)[x_1,x_2,\ldots,x_m]=[\overline{x_1},\overline{x_2},\ldots,\overline{x_m}]\subset \overline{x_1}\lor\overline{x_2}\lor\ldots\lor\overline{x_m}$. It is sufficient to verify that $[1,+\infty)[x_1,x_2,\ldots,x_m]$ is closed.
	
	Assume that  $\{\lambda_ka_k\}_{k=1}^\infty$ is a convergent sequence in $[1,+\infty)[x_1,x_2,\ldots,x_m]$ with $\lambda_k\ge1$ and $a_k\in[x_1,x_2,\ldots,x_m]$. Choose $r,R>0$ such that $B(o,r)\cap [x_1,x_2,\ldots,x_m]=\emptyset$ and $|\lambda_ka_k|\le R$ for all $k$. It is not hard to see that $\lambda_k\le r^{-1}R$ for all $k$. Hence, there is a strictly increasing sequence of integers $\{k_i\}_{i=1}^\infty$, $\lambda_0>0$ and $a_0\in\R^n$ such that $\lim_{i\to\infty}\lambda_{k_i}=\lambda_0$ and $\lim_{i\to\infty}a_{k_i}=a_0$. One gets $\lambda_0\in[1,+\infty)$ and $a_0\in[x_1,x_2,\ldots,x_m]$ by the fact that $[1,+\infty)$ and $[x_1,x_2,\ldots,x_m]$ are both closed. It follows that $\lim_{k\to\infty}\lambda_ka_k=\lim_{i\to\infty}\lambda_{k_i}a_{k_i}\in[1,+\infty)[x_1,x_2,\ldots,x_m]$. The proof is completed.
\end{proof}

%If $\varphi(\overline o)=\varphi(\emptyset)$, then $\varphi$ is constant.
\begin{Lem}\label{Lem:const}
$\varphi(\overline o) \neq \varphi(\emptyset)$.
\end{Lem}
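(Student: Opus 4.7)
The plan is to exploit the fact that $\emptyset$ and $\overline{o} = \R^n$ are, respectively, the minimum and maximum elements of the lattice $(\PC,\subset)$, and combine this with the monotonicity of endomorphisms (Lemma \ref{Lem:EndoIsInc}) to force constancy whenever $\varphi$ collapses the extreme elements.

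In more detail, first I would note that for every $K \in \PC$ we have $\emptyset \subset K \subset \R^n = \overline{o}$, simply by the definition of $\PC$. Applying Lemma \ref{Lem:EndoIsInc} to the endomorphism $\varphi$, this gives
\begin{equation*}
\varphi(\emptyset) \subset \varphi(K) \subset \varphi(\overline{o})
\end{equation*}
for every $K \in \PC$.

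Now I would argue by contraposition: assume for the sake of contradiction that $\varphi(\overline{o}) = \varphi(\emptyset)$. Then the chain of inclusions above collapses, and $\varphi(K) = \varphi(\emptyset)$ for every $K \in \PC$, i.e.\ $\varphi$ is constant. This contradicts the standing assumption in this section that $\varphi$ is non-constant, and the proof is complete.

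There is essentially no obstacle here; the only subtlety worth flagging is the convention that $\emptyset$ and $\R^n$ both belong to $\PC$ (so that they genuinely serve as lattice bottom and top), and that the lemma is really a very short observation meant to license, in subsequent arguments, a clean separation between the constant case and the generic case where $\varphi$ distinguishes the extremes of $\PC$.
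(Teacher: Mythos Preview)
Your proof is correct and essentially identical to the paper's: both use $\emptyset \subset K \subset \overline{o}$ together with Lemma~\ref{Lem:EndoIsInc} to get $\varphi(\emptyset)\subset\varphi(K)\subset\varphi(\overline{o})$, and then observe that $\varphi(\overline o) = \varphi(\emptyset)$ would force $\varphi$ to be constant.
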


\begin{proof}
Let $K \in \PC$.
Observe that $\emptyset\subset K\subset\R^n=\overline{o}$. By Lemma \ref{Lem:EndoIsInc}, we have 
\begin{gather*}
	\varphi(\emptyset)\subset\varphi(K)\subset\varphi(\overline{o}).
\end{gather*} 
If $\varphi(\overline o) = \varphi(\emptyset)$, then $\varphi$ is a constant, a contradiction.
\end{proof}

\subsection{Endomorphisms for \texorpdfstring{$n\geq2$}{n>=2}}
\begin{Lem}\label{Lem:one2all}
If $n\geq2$, then $\varphi(\overline x)\neq\varphi(\emptyset)$ for all $x\in\R^n$.
\end{Lem}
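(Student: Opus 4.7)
The case $x=o$ is immediate from Lemma~\ref{Lem:const} since $\overline o=\R^n$. For $x\ne o$, the plan is to argue by contradiction: suppose some $x_0\ne o$ satisfies $\varphi(\overline{x_0})=\varphi(\emptyset)$. The strategy is to use $\lor$ to force the antipodal ray $\overline{-x_0}$ to attain the maximal value $\varphi(\overline o)$, then use $\land$ to propagate the minimal value $\varphi(\emptyset)$ to every ray off the line $\R x_0$, and finally iterate the construction to loop back to $\overline{-x_0}$, producing $\varphi(\emptyset)=\varphi(\overline o)$ in contradiction with Lemma~\ref{Lem:const}.

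For the first step, I would observe that $o\in[x_0,-x_0]\subset\cl[\overline{x_0},\overline{-x_0}]$, so Lemma~\ref{Lem:PC^nIsLattice} gives $\overline{x_0}\lor\overline{-x_0}=\R^n=\overline o$. Applying $\varphi$, together with $\varphi(\emptyset)\subset\varphi(\overline{-x_0})$ from Lemma~\ref{Lem:EndoIsInc}, yields
\[
\varphi(\overline o)=\varphi(\overline{x_0})\lor\varphi(\overline{-x_0})=\varphi(\emptyset)\lor\varphi(\overline{-x_0})=\varphi(\overline{-x_0}).
\]
For the second step, I take any $z\ne o$ off the line $\R x_0$. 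Since $\overline z\subset\R z$ and $\overline{-x_0}\subset\R x_0$ lie on distinct lines through the origin while neither contains $o$, one has $\overline z\cap\overline{-x_0}=\emptyset$. Applying $\varphi$ and using $\varphi(\overline z)\subset\varphi(\overline o)=\varphi(\overline{-x_0})$, I obtain
\[
\varphi(\emptyset)=\varphi(\overline z\cap\overline{-x_0})=\varphi(\overline z)\cap\varphi(\overline{-x_0})=\varphi(\overline z).
\]

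Finally, since $n\ge 2$, I pick some $z\ne o$ off $\R x_0$. By the previous step $\varphi(\overline z)=\varphi(\emptyset)$, so $z$ can now play the role of $x_0$: the first step applied to $z$ gives $\varphi(\overline{-z})=\varphi(\overline o)$, and because $-x_0$ is itself not on $\R z$ (otherwise $z\in\R x_0$), the second step applied to $z$ delivers $\varphi(\overline{-x_0})=\varphi(\emptyset)$. Together with $\varphi(\overline{-x_0})=\varphi(\overline o)$ from the first step applied to $x_0$, this forces $\varphi(\emptyset)=\varphi(\overline o)$, the desired contradiction. The only subtlety is the geometric observation that $\overline z\cap\overline{-x_0}=\emptyset$ whenever $z\notin\R x_0$; all remaining work is routine use of the endomorphism identities and monotonicity, and the hypothesis $n\ge 2$ enters only to guarantee the existence of such a $z$.
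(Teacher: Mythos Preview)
Your proof is correct and takes a genuinely different route from the paper's. The paper defines the bad set $K=\{x:\varphi(\overline x)=\varphi(\emptyset)\}$, verifies that $K$ is a pseudo-cone, separates it from the origin by a hyperplane $H_{u,0}$, and then engineers a planar configuration of five points $y,z,w,s,t$ with $y,z\in K$ and $w,s,t\in\interior H_{u,0}^+$ to force $\varphi(\overline w)=\varphi(\emptyset)$ even though $w\notin K$. Your argument bypasses all of this structure: the single identity $\overline{x_0}\lor\overline{-x_0}=\overline o$ forces $\varphi(\overline{-x_0})=\varphi(\overline o)$, and then one meet computation propagates $\varphi(\emptyset)$ to every ray off $\R x_0$; iterating once closes the loop. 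This is shorter and more conceptual, using only the lattice identities and monotonicity rather than the half-space geometry of pseudo-cones. The paper's approach, on the other hand, never invokes $\varphi(\overline o)$ at this stage and keeps the argument entirely within proper pseudo-cones, which is closer in spirit to the later lemmas where the set of bad directions is analysed more finely.
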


\begin{proof}
Set	$K=\{x\in\R^n:\varphi(\overline x)=\varphi(\emptyset)\}$. By Lemma \ref{Lem:const}, it is clear that $o\notin K$. Suppose that $K\neq\emptyset$. 

Next, we show that $K$ is a pseudo-cone. In fact, let $a,b\in K, c\in [a,b]$, and thus, $\emptyset\subset\overline c\subset\overline a\lor\overline b$. By Lemma \ref{Lem:EndoIsInc}, it deduces that
\begin{gather*}
	\varphi(\emptyset)\subset\varphi(\overline c)\subset\varphi(\overline a\lor\overline b)=\varphi(\overline a)\lor\varphi(\overline b)=\varphi(\emptyset),
\end{gather*}
i.e. $c\in K$.
Since $\emptyset\subset\overline{\lambda a}\subset\overline a$ for all $\lambda\geq 1$, by Lemma \ref{Lem:EndoIsInc}, we have
\begin{gather*}
	a\in K\Rightarrow\lambda a\in K \text{ for all } \lambda\geq1.
\end{gather*}

Since $K$ is a pseudo-cone, one may choose $u\in\mathbb{S}^{n-1}$ such that $K\subset H^-_{u,0}$, and then choose $y\in K$, $v\in H_{u,0}\setminus\{o\}$. 
Set $z=2y$, and let
\begin{gather*}
	w=\begin{cases}
		u+v,  & y=-|y|u, \\
		u,  & \text{ other cases},
	\end{cases}
\end{gather*}
$s=2w-y$ and $t=2w-2y$. It is trivial to show that $w=[y,s]\cap[z,t]$, $y,z\in K$, $w,s,t\in\interior H^+_{u,0}$. 
Clearly, $w$ and $y$ are linearly independent, so are $s$ and $t$. Observe that
\begin{align*}
\varphi(\overline w)&\subset\varphi\left((\overline y\lor\overline s)\land(\overline z\lor\overline t)\right)=\varphi(\overline y\lor\overline s)\land\varphi(\overline z\lor\overline t)=\left(\varphi(\overline y)\lor\varphi(\overline s)\right)\land\left(\varphi(\overline z)\lor\varphi(\overline t)\right)
\end{align*}
Since $y,z \in K$, we further have
\begin{align*}
\left(\varphi(\overline y)\lor\varphi(\overline s)\right)\land\left(\varphi(\overline z)\lor\varphi(\overline t)\right)
%=\left(\varphi(\emptyset)\lor\varphi(\overline s)\right)\land\left(\varphi(\emptyset)\lor\varphi(\overline t)\right)
=\varphi(\overline s)\land\varphi(\overline t)=\varphi(\overline s\land\overline t)=\varphi(\emptyset).
\end{align*}
Together with $\varphi(\emptyset)\subset\varphi(\overline w)$, it implies $w\in K$, a contradiction. Hence, $K=\emptyset$.
\end{proof}

\begin{Lem}\label{Lem:PhiPSIsNotBarO}
    If $n\geq 2$, $K$ is a closed pseudo-cone, then $\varphi(K)\neq \R^n$.
\end{Lem}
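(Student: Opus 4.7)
The plan is to derive a contradiction by meeting $K$ with a ray $\overline x$ that lies entirely in the complement of $K$, and then applying $\varphi$ to force $\varphi(\overline x)=\varphi(\emptyset)$, in contradiction with Lemma \ref{Lem:one2all}.

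First I would suppose for contradiction that $\varphi(K) = \R^n$ for some closed pseudo-cone $K$. Because $K$ is a nonempty closed convex set with $o \notin K$, standard separation yields a unit vector $u \in \mathbb{S}^{n-1}$ and some $\alpha > 0$ with $K \subset H_{u,\alpha}^+$, i.e.\ $\langle u, y\rangle \geq \alpha > 0$ for all $y \in K$. Setting $x = -u$ gives $\langle x, y\rangle \leq -\alpha < 0$ for every $y \in K$, while $\langle x, \mu x\rangle = \mu > 0$ for each $\mu \geq 1$. Hence $\mu x \neq y$ for any $y\in K$ and any $\mu\geq 1$, so $\overline x \cap K = \emptyset$.

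Next I would use the lattice structure. By Lemma \ref{Lem:PC^nIsLattice} the meet in $\PC$ is intersection, so $K \land \overline x = \emptyset$. Applying $\varphi$ and using that it is an endomorphism,
\begin{equation*}
\varphi(\emptyset) \;=\; \varphi(K \land \overline x) \;=\; \varphi(K) \land \varphi(\overline x) \;=\; \R^n \land \varphi(\overline x) \;=\; \varphi(\overline x),
\end{equation*}
where the last equality holds because $\R^n$ is the top of $\PC$. This contradicts Lemma \ref{Lem:one2all}, which guarantees $\varphi(\overline x) \neq \varphi(\emptyset)$ for every $x \in \R^n$ when $n \geq 2$.

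I don't foresee a real obstacle here: the entire argument rests on the observation that a closed pseudo-cone can always be separated from the origin by a hyperplane, which lets one exhibit a ray $\overline x$ disjoint from $K$. The only thing to be careful about is verifying that $\overline x$ really does miss $K$, not just the ray $R_x$: this is immediate from the sign computation $\langle x, \mu x\rangle > 0 > \langle x, y\rangle$ for $\mu \geq 1$ and $y \in K$, so any $\mu x \in \overline x$ differs from every element of $K$ in its inner product with $x$ and hence cannot lie in $K$.
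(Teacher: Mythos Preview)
Your proof is correct and is essentially the same as the paper's: both separate $K$ from the origin by a hyperplane, pick a ray $\overline x$ on the origin's side (the paper takes $\overline u$ with $K\subset\interior H_{u,\alpha}^-$, you take $\overline{-u}$ with $K\subset H_{u,\alpha}^+$), and then use $\varphi(K)\land\varphi(\overline x)=\varphi(K\land\overline x)=\varphi(\emptyset)$ together with $\varphi(K)=\R^n$ to contradict Lemma~\ref{Lem:one2all}. The only difference is cosmetic sign convention.
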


\begin{proof}
One proves it by contradiction. Suppose $\varphi(K)= \R^n$. There exits a hyperplane  $H_{u,\alpha}$ separate $K$ and $o$ such that $K\subset\interior H_{u,\alpha}^-$. It is clear that
\begin{gather*}
    \varphi(\overline u)=\varphi(K)\land\varphi(\overline u)=\varphi(\emptyset)%=\emptyset,
\end{gather*}
which contradict Lemma \ref{Lem:one2all}. It completes the proof.
\end{proof}

The following lemma is the crucial topological proposition that is used in the classification.
	
\begin{Lem}\label{Lem:countable}
Let $n\geq 2$, $K \in \PC$ and $\mathcal{F}=\{K_\gamma: \dim K_\gamma =n,~ K\subsetneq K_\gamma,~\gamma\in\Gamma\}$ be a nonempty subclass of $\PC$. 
If $K_\alpha\cap K_\beta=K$ for any $\alpha,\beta \in \Gamma$ whenever $\alpha\neq\beta$, then $\mathcal F$ is at most countable. 
\end{Lem}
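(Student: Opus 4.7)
\textbf{Proof proposal for Lemma \ref{Lem:countable}.}
The plan is to associate to each $K_\gamma\in\mathcal F$ a nonempty open subset $U_\gamma$ of $\R^n$, show that the collection $\{U_\gamma\}_{\gamma\in\Gamma}$ is pairwise disjoint, and then invoke the fact that $\R^n$ is second countable (equivalently, separable) to conclude that a family of pairwise disjoint nonempty open sets in $\R^n$ must be at most countable. The natural choice is
\[ U_\gamma := \interior K_\gamma\setminus K, \]
which is automatically open since $K$ is closed and $\interior K_\gamma$ is open.

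The first step is to prove $U_\gamma\neq\emptyset$ for every $\gamma$. Since $\dim K_\gamma=n$, one has $\interior K_\gamma\neq\emptyset$; pick any $y\in\interior K_\gamma$. If $y\notin K$ there is nothing to do. Otherwise, use $K\subsetneq K_\gamma$ to pick some $x\in K_\gamma\setminus K$. By the standard convex-geometric fact that the half-open segment from a boundary (or exterior-of-interior) point of a convex body to an interior point lies in the interior, one has $(x,y]\subset\interior K_\gamma$. Because $K$ is closed and $x\notin K$, a whole neighborhood of $x$ is disjoint from $K$, and in particular the portion of $(x,y]$ close to $x$ lies in $\interior K_\gamma\setminus K$. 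Hence $U_\gamma\neq\emptyset$.

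The second step is pairwise disjointness. If $\alpha\neq\beta$ and $x\in U_\alpha\cap U_\beta$, then in particular $x\in\interior K_\alpha\cap\interior K_\beta\subset K_\alpha\cap K_\beta=K$, contradicting $x\in U_\alpha\subset \R^n\setminus K$. The third and final step: since $\R^n$ admits a countable base, any family of pairwise disjoint nonempty open sets is at most countable (each one contains a distinct basis element), so $|\Gamma|\le\aleph_0$.

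The only slightly delicate point is the nonemptiness of $U_\gamma$ when $K$ already has full dimension $n$, i.e. when $\interior K$ is nonempty and possibly most of $\interior K_\gamma$ lies inside $K$; this is why I single out a specific $x\in K_\gamma\setminus K$ and run the segment argument rather than trying to argue by dimensions alone. Everything else is either a direct consequence of the hypothesis $K_\alpha\cap K_\beta=K$ or standard point-set topology, so I expect this short lemma to go through cleanly along these lines.
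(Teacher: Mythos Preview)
Your proof is correct and follows essentially the same approach as the paper: both associate to each $K_\gamma$ the open set $(\interior K_\gamma)\setminus K=\interior(K_\gamma\setminus K)$, observe these are pairwise disjoint by the hypothesis $K_\alpha\cap K_\beta=K$, and conclude countability via separability of $\R^n$ (the paper picks a rational point in each, you phrase it as second countability). Your segment argument for nonemptiness is more explicit than the paper's one-line parenthetical, but the content is identical.
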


\begin{proof}
For each $\gamma\in\Gamma$, observe that %$\interior (K_\gamma\setminus K) = (\interior K_\gamma) \setminus (\cl K) = (\interior K_\gamma) \setminus K \neq \emptyset$. 
$\interior (K_\gamma\setminus K) \neq \emptyset$ (since $K\subsetneq K_\gamma$).
%(assume that $K_\gamma \neq \emptyset$). 
We may associate a rational point $a_\gamma$ such that $a_\gamma\in K_\gamma\setminus K$. If $\alpha\neq\beta$, then $\interior(K_\alpha\setminus K)\cap\interior(K_\beta\setminus K)=\emptyset$. Therefore $a_\alpha\neq a_\beta$.
Hence, $\mathcal F$ is at most countable.
\end{proof}

\begin{Lem}\label{Lem:EmptysetMapstoEmptyset}
	If $n\geq2$, then $\varphi(\emptyset)=\emptyset$.
\end{Lem}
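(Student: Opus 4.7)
My plan is to proceed by contradiction. Assume $L_0 := \varphi(\emptyset) \neq \emptyset$. Since $\varphi$ is non-constant and order-preserving (Lemma \ref{Lem:EndoIsInc}), $L_0 \neq \R^n$, so $L_0$ is a genuine closed pseudo-cone. I aim to produce an uncountable family of $n$-dimensional pseudo-cones in $\PC$ whose pairwise intersections all equal a common element strictly contained in each member, and so contradict Lemma \ref{Lem:countable}.

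The natural starting family is $\{\varphi(\overline{x}):x\in\mathbb{S}^{n-1}\}$. Distinct points on the unit sphere give disjoint rays (from $\lambda x_1=\mu x_2$ with $\lambda,\mu\ge 1$ and $|x_i|=1$ one deduces $x_1=x_2$), so the endomorphism property yields $\varphi(\overline{x_1})\cap\varphi(\overline{x_2})=\varphi(\emptyset)=L_0$ whenever $x_1\neq x_2$. Lemma \ref{Lem:one2all} combined with order preservation forces $L_0\subsetneq \varphi(\overline{x})$, and Lemma \ref{Lem:PhiPSIsNotBarO} gives $\varphi(\overline{x})\neq\R^n$, so each image is a closed pseudo-cone. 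In the case $\dim L_0=n$, every $\varphi(\overline{x})$ automatically has dimension $n$, and Lemma \ref{Lem:countable} with $K=L_0$ immediately delivers the contradiction.

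The more delicate case is $\dim L_0<n$. Here I stratify $\mathbb{S}^{n-1}$ by $k:=\dim\varphi(\overline{x})\in\{\dim L_0,\ldots,n\}$ and try to show that the stratum $k=n$ is uncountable. Writing $V_x:=\lin\varphi(\overline{x})$, I observe $\lin L_0\subseteq V_x$ since $L_0\subset\varphi(\overline{x})$. If uncountably many $x$ in a stratum with $k<n$ shared a common subspace $V_x=V$, then Lemma \ref{Lem:countable} applied inside $V$ (its proof uses only rational points of the ambient and adapts verbatim to any finite-dimensional Euclidean space) would already contradict uncountability. So for any uncountable sub-stratum the spans $V_x$ vary. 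In that case, fixing a base $\overline{x_0}$ from the stratum, I take uncountably many $y$ from the same stratum with $V_y\neq V_{x_0}$ and form the pseudo-cones $K_y:=\overline{x_0}\lor\overline{y}$; then $\varphi(K_y)=\varphi(\overline{x_0})\lor\varphi(\overline{y})$ has linear span $V_{x_0}+V_y$, hence dimension strictly greater than $k$. Iterating this join construction with auxiliary rays (an induction on $n-k$) yields pseudo-cones $K_y^{*}$ with $\dim\varphi(K_y^{*})=n$ and, by a genericity argument on $\mathbb{S}^{n-1}$, the pairwise intersections $K_{y_1}^{*}\cap K_{y_2}^{*}$ collapse to a common pseudo-cone $Y$ (distinct affine hulls in $\R^n$ intersect only along the shared base). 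The strict inclusion $\varphi(Y)\subsetneq\varphi(K_y^{*})$ follows because equality would force $\varphi(\overline{y})\subseteq\varphi(Y)$ and hence $\varphi(\overline{y})=L_0$, contradicting Lemma \ref{Lem:one2all}. Lemma \ref{Lem:countable} applied to $\{\varphi(K_y^{*})\}$ with $K=\varphi(Y)$ gives the contradiction, so $L_0=\emptyset$.

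The principal obstacle is this join-induction step for the intermediate strata $k\in\{\dim L_0+1,\ldots,n-1\}$: one must select the auxiliary rays and the variable $y$'s in sufficiently general position on $\mathbb{S}^{n-1}$ so that the iterated joins really attain dimension $n$ and their pairwise intersections really collapse to the common base, with strict containment preserved. The case $\dim L_0=n$ is essentially immediate from Lemma \ref{Lem:countable}; the careful dimensional bookkeeping for $\dim L_0<n$ is where the real work lies.
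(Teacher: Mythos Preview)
Your strategy coincides with the paper's: argue by contradiction, use Lemma \ref{Lem:countable} to bound how many $\varphi$-images can sit in a fixed subspace, and then perform a join-induction to manufacture an uncountable family of $n$-dimensional images with a common pairwise intersection. Your stratification of $\mathbb{S}^{n-1}$ by $\dim\varphi(\overline x)$, together with the observation that uncountably many $x$ cannot share $V_x=V$ (Lemma \ref{Lem:countable} inside $V$), is exactly the content of the paper's Assertion~I and Assertion~II.

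The place where your sketch is genuinely incomplete is the one you yourself flag: controlling the pairwise intersections of the iterated joins. Your claim that ``distinct affine hulls intersect only along the shared base'' by ``genericity on $\mathbb{S}^{n-1}$'' is not a proof. For a single join step you need $x_0,y_1,y_2$ linearly independent to get $(\overline{x_0}\lor\overline{y_1})\cap(\overline{x_0}\lor\overline{y_2})=\overline{x_0}$; this already fails for arbitrary $y_1,y_2$ in $n=2$, and in higher dimensions the iterated version requires a coherent system of choices that a bare genericity appeal does not supply. The paper resolves this precisely by abandoning the sphere and working instead inside a fixed affine hyperplane $S$ with $o\notin S$: the auxiliary points are taken from \emph{parallel} $(k-1)$-flats $T,T'\subset S$, and then Lemma \ref{Lem:0706-2239} plus elementary linear algebra force $(\overline{x_1}\lor\cdots\lor\overline{x_k})\cap(\overline{y_1}\lor\cdots\lor\overline{y_k})=\emptyset$ whenever the $x_i$'s lie in $T$ and the $y_i$'s in $T'\neq T$. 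This parallel-flat device is what turns your induction outline into an actual induction (the paper's Assertion~III); without it, the ``iteration'' does not visibly close.
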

\begin{proof}
Suppose $\varphi(\emptyset)\neq\emptyset$. Then $m:=\dim \varphi(\emptyset)\geq 1$.

By Lemma \ref{Lem:one2all}, $\varphi(\overline y)\neq \varphi(\emptyset)$ for all $y\in\R^n$. 
Therefore, by Lemma \ref{Lem:EndoIsInc}, $\varphi(\emptyset) \neq \R^n$ which gives $o\notin \varphi(\emptyset)$.
\myvskip

\noindent\textit{Assertion I. } $\mathcal{U}:=\{u\in\mathbb{S}^{n-1}:\varphi(\overline{\lambda u})\subset\lin \varphi(\emptyset)\text{ for some }\lambda>0\}$ is at most countable.

\myvskip

In fact, for $u\in\mathcal{U}$, choose a fixed $\lambda_u$ such that $	    \varphi(\overline{\lambda_u u})\subset\lin \varphi(\emptyset)$.
Consider Lemma \ref{Lem:countable} in $\lin \varphi(\emptyset)$ with
\begin{gather*}
	\mathcal{F}:=\{\varphi(\overline{\lambda_uu}):u\in\mathcal{U}\},~K=\varphi(\emptyset).
\end{gather*}
We get that $\mathcal{F}$ is at most countable, i.e. $\mathcal{U}$ is at most countable. It completes the proof of Assertion I.

\myvskip
\noindent\textit{Assertion II. } $m \leq n-2$.
\myvskip

	Indeed, by Assertion I, it is clear that $m\neq n$, otherwise $\mathcal{U}=\mathbb{S}^{n-1}$. 
	Suppose $m= n-1$. 
	Let $\mathcal{V}=\mathbb{S}^{n-1}\setminus\mathcal{U}$. 
	Since $\varphi(\emptyset) \subset \varphi(\overline v)$ for all $v\in\mathcal{V}$, we have
	$\dim\lin\varphi(\overline v)=n$. By Lemma \ref{Lem:countable}, $\mathcal{V}$ is at most countable, which contradicts that $\mathcal{U}$ is at most countable. It completes the proof of Assertion II.

\myvskip
\noindent\textit{Assertion III.} 
Assume $n \ge 3$.
For every $k$-flat $S$ with $1\leq k\leq n-m-1$ and $o\notin S$, there are points $x_1,x_2,\ldots,x_{k+1}\in S$ such that 
	\begin{gather*}\label{equ:Lem:EmptysetMapstoEmptyset1}
	\dim\varphi(\overline x_1\lor\overline x_2\lor\ldots\lor\overline x_{k+1})\geq k+m+1.
	\end{gather*}
\myvskip

It is proved by induction with respect to $k$. For $k=1$, since $\{x/|x|:x\in S\}$ is an uncountable subset of $\mathbb{S}^{n-1}$, together with Assertion I,
there must be an $x_1 \in S$ such that
$\varphi(\overline{x_1})\not\subset \lin \varphi(\emptyset)$.
Recall that $\varphi(\emptyset) \subset \varphi(\overline{x_1})$. 
We have $\dim\varphi(\overline x_1)\geq m+1$. 
%If not, $\varphi(\overline x_1)\subset\lin\varphi(\emptyset)$ holds. 
%By Lemma \ref{Lem:one2all}, we have that the map $S\ni y\mapsto \varphi(\overline y)\in\PC$ is injective. By Assertion I, $S$ is at most countable, a contradiction.
Suppose
\begin{gather*}
    \dim\varphi(\overline x_1\lor\overline y)\leq m+1\text{ for all } y\in S.
\end{gather*}
Then
\begin{gather*}
    \varphi(\overline y)\subset \lin \varphi(\overline x_1\lor\overline y) = \lin \varphi(\overline{x_1})\text{ for all }y\in S.
\end{gather*}
By Assertion I, the set $\{y\in S:\dim\varphi(\overline y)=m\}$ is at most countable. Thus there are  
uncountably many $y\in S$ such that $\dim \varphi(\overline y) = m + 1$, which contradicts Lemma \ref{Lem:countable} (in the space $\lin \varphi(\overline x_1)$, $\mathcal{F}=\{\varphi(\overline y):y\in S\}$, $K=\varphi(\emptyset)$ and $\varphi(\emptyset) \subsetneq \varphi(\overline y)$ for all $y \in S$) since the map $S\ni y\mapsto \varphi(\overline y)\in\PC$ is injective. (Recall Lemma \ref{Lem:one2all}.) 
Hence
$\varphi(\overline{x_1}\lor\overline{x_2})\geq m+2$ for some $x_2\in S$.

For $k\geq 2$, assume that Assertion III holds for all $(k-1)$-flats not containing the origin. 
Choose a $(k-1)$-flat $T\subset S$, by the hypothesis of induction, there exists $x_1,x_2,\ldots,x_{k}\in T$ such that 
\begin{gather*}
    \dim\varphi(\overline {x_1}\lor\overline{x_2}\lor\cdots\lor \overline{x_k})\geq k+m.
\end{gather*}
Set $K_T=\varphi(\overline {x_1}\lor\overline{x_2}\lor\cdots\lor \overline{x_{k}})$. 

Suppose
\begin{gather}\label{equ:0715-1728}
    \dim\varphi(\overline {x_1}\lor\overline{x_2}\lor\cdots\lor \overline{x_{k}}\lor\overline y)\leq k+m\text{ for all }y\in S.
\end{gather}
Let $\mathcal T$ be the family of all $(k-1)$-flats parallel to $T$ in $S$ (We specify $T\in\mathcal T$). For any $T'\in\mathcal T$ such that $T'\neq T$,
again by the hypothesis of induction, there are $y_1,y_2,\ldots,y_k\in T'$ such that
\begin{gather*}
    \dim\varphi(\overline {y_1}\lor\overline{y_2}\lor\cdots\lor \overline{y_k})\geq k+m.
\end{gather*}
Set $K_{T'}=\varphi(\overline {y_1}\lor\overline{y_2}\lor\cdots\lor \overline{y_k})$. Observe that, as we can take $y=x_k$ in \eqref{equ:0715-1728}, $\dim \lin K_T =k+m$, and
\begin{gather*}
    \varphi(\overline y) \subset \varphi(\overline {x_1}\lor\overline{x_2}\lor\cdots\lor \overline{x_k}\lor\overline y) \subset \lin K_T \text{ for all } y \in S. 
\end{gather*}
Hence, $\dim K_T=\dim K_{T'}=k+m$. 
Also, $K_{T'}\cap K_{T''}=\varphi(\emptyset)$ for distinguished $T',T''\in\mathcal T$.
Indeed, by setting $K_{T'}=\varphi(\overline{z'_1}\lor\overline{z'_2}\lor\ldots\overline{z'_{k}})$ and $K_{T''}=\varphi(\overline{z''_1}\lor\overline{z''_2}\lor\ldots\overline{z''_{k}})$ with $z'_1,z'_2,\ldots,z'_k\in T'$ and $z''_1,z''_2,\ldots,z''_k\in T''$, since Lemma \ref{Lem:0706-2239}, we have that
\begin{gather*}
(\overline{z'_1}\lor\overline{z'_2}\lor\ldots\overline{z'_{k}})\land(\overline{z''_1}\lor\overline{z''_2}\lor\ldots\overline{z''_{k}})=\big([1,+\infty)[z'_1,z'_2,\ldots,z'_k]\big)\cap\big([1,+\infty)[z''_1,z''_2,\ldots,z''_k]\big)
=\emptyset.
\end{gather*}
(Notice $o\notin S$.)
It follows that $K_{T'}\cap K_{T''}=K_{T'}\land K_{T''}=\varphi(\emptyset)$.
 
Together with the arbitrariness of $T'$, by Lemma \ref{Lem:countable} in $\lin K_T$, there are at most countably many such $T'$, a contradiction. It completes the induction.

\myvskip		

Now if $n=2$, then Assertion I contradicts Assertion II.
%If $n \ge 3$, then let $k=n-m-1$ in Assertion III.
%In particular, denote by $\mathcal{S}$ the family of all $(n-m-1)$-flat not containing the origin. 
%Then, we may associate every flat $S\in\mathcal S$ a closed pseudo-cone $K_S$ such that $\dim K_S=n$ and that $S\cap S'=\emptyset$ implies $K_S\cap K_{S'}=\varphi(\emptyset)$. 
%Since $m\geq 1$, there exists a uncountable subfamily of $\mathcal{S}$ such that every pair elements are disjoint, which contradicts Lemma \ref{Lem:countable}.
If $n \ge 3$, fix a $(n-1)$-flat $S$ which does not contain the origin.
We can find an uncountable family $\mathcal{T}$ of pairwise disjoint $(n-m-1)$-flats contained in $S$.
For any $T,T' \in \mathcal{T}$, by Assertion III for $k=n-m-1$,
we may associate closed pseudo-cone $K_T$ and $K_{T'}$ as in Assertion III such that $\dim K_T=\dim K_{T'}=n$, and that $K_T\cap K_{T'}=\varphi(\emptyset)$.
However, by Lemma \ref{Lem:countable}, $\mathcal{T}$ is countable, a contradiction.
\end{proof}

\begin{Rem}\label{Rem:0703-2225}
	For $K,L\in\PC$ with $K\land L=\emptyset$ and $L\ne\emptyset$, one has that $\varphi(K)\subsetneq \varphi(K\lor L)$, which is useful when we apply Lemma \ref{Lem:countable} later.
	In fact, by Lemma $\ref{Lem:EndoIsInc}$, $\varphi(K)\subset\varphi(K\lor L)$.
	Suppose $\varphi(K)=\varphi(K\lor L)$.
	It follows that $\varphi(L)\subset\varphi(K)\lor\varphi(L)=\varphi(K\lor L)=\varphi(K)$.
	One gets  $\varphi(L)=\varphi(K)\land\varphi(L)=\varphi(K\land L)=\varphi(\emptyset)=\emptyset$. Choose $x\in L$. Then, $\varphi(\overline x)\subset\varphi(L)=\emptyset$, i.e. $\varphi(\overline x)=\emptyset=\varphi(\emptyset)$, which contradicts Lemma \ref{Lem:one2all}. 
\end{Rem}

The following lemma shows that $\varphi$ maps a $1$-dimensional pseudo-cone to a $1$-dimensional pseudo-cone in the case $n\geq 3$.
\begin{Lem}\label{Lem:Point2PointForNGeq3}
	If $n\geq 3$, then $\dim{\varphi(\overline x)}=1$ for all $x\in\R^n\setminus\{o\}$.
\end{Lem}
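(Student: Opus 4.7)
I plan to argue by contradiction: suppose $\dim\varphi(\overline x)=m+1\ge 2$ for some $x\in\R^n\setminus\{o\}$, and set $W=\lin\varphi(\overline x)$. The strategy is to mimic the proof of Lemma \ref{Lem:EmptysetMapstoEmptyset}, with $\overline x$ and $\varphi(\overline x)$ now playing the roles previously played by $\emptyset$ and $\varphi(\emptyset)$. First I would fix a hyperplane $S\subset\R^n$ with $x\in S$, $o\notin S$, and consider points $y\in S$ lying on distinct lines through $x$. Using Lemma \ref{Lem:0706-2239}, one has $\overline x\lor\overline y=[1,+\infty)[x,y]$; when $y_1,y_2\in S$ lie on different lines through $x$ in $S$ the two $2$-flats $\R x+\R y_i$ both contain $\R x$ but are distinct, so they meet exactly in $\R x$, and a short direct computation gives $(\overline x\lor\overline{y_1})\land(\overline x\lor\overline{y_2})=\overline x$. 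This yields an uncountable family with a clean pairwise intersection, which I would push through $\varphi$ and feed into Lemma \ref{Lem:countable}.

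If $\dim\varphi(\overline x)=n$, the argument is short. By Remark \ref{Rem:0703-2225} each $\varphi(\overline x\lor\overline{y_\gamma})$ strictly contains $\varphi(\overline x)$, hence is $n$-dimensional, with pairwise intersections equal to $\varphi(\overline x)$. None of them can equal $\R^n$ (otherwise the pairwise-intersection property would force $\varphi(\overline x)=\R^n$), so Lemma \ref{Lem:countable} with $K=\varphi(\overline x)$ immediately gives the required contradiction.

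In the remaining case $m+1<n$, the argument needs an induction of the same shape as Assertions I and III in Lemma \ref{Lem:EmptysetMapstoEmptyset}. An Assertion I analog, proved by applying Lemma \ref{Lem:countable} inside $W$ with $K=\varphi(\overline x)$, shows that only countably many distinct $2$-flats through $\R x$ can carry a $y$ with $\varphi(\overline x\lor\overline y)\subset W$; hence generic $y\in S$ satisfy $\dim\varphi(\overline x\lor\overline y)\ge m+2$. Adding one ray at a time in this fashion and tracking the supporting subspace $\lin\varphi(\overline x\lor\overline{y_1}\lor\cdots\lor\overline{y_i})$, I would construct $y_1,\dots,y_{n-m-1}$ with $\dim\varphi(\overline x\lor\overline{y_1}\lor\cdots\lor\overline{y_{n-m-1}})=n$, and then, by letting the last batch of rays vary over a family of parallel flats exactly as in the closing step of Assertion III, produce uncountably many $n$-dimensional pseudo-cones with a common pairwise intersection, contradicting Lemma \ref{Lem:countable} once more. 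I expect the main obstacle to be precisely the combinatorial delicacy at this final stage: choosing the varying $y_i$'s generically enough that both the inductive dimension increase and the clean pairwise-intersection property at the top level hold simultaneously, which mirrors the most technical part of Assertion III.
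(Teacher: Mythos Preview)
Your proposal is correct and follows essentially the same approach as the paper's proof: set up a contradiction, show via Lemma~\ref{Lem:countable} that only countably many ``directions'' can keep $\varphi(\overline x\lor\overline y)$ inside $\lin\varphi(\overline x)$, ratchet up the dimension one ray at a time by induction, and finish by producing uncountably many full-dimensional pseudo-cones with common pairwise intersection $\varphi(\overline x)$. The only cosmetic difference is your parametrisation: you place $x$ inside a fixed hyperplane $S$ and index by lines through $x$ in $S$, whereas the paper keeps $p$ outside $\lin S$ and indexes first by open half-planes $F_u=\{\lambda p+\mu u:\lambda\in\R,\mu>0\}$ with $u\in\mathbb{S}^{n-1}\cap p^{\bot}$, and then in Assertion~III by $k$-flats $S$ with $\dim\lin(\overline p\cup S)=k+2$; both choices yield the same uncountable families with pairwise intersection $\overline x$, and the delicate direct-sum computation you flag as the ``main obstacle'' is exactly the claim at the end of the paper's Assertion~III.
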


\begin{proof}
	We process it indirectly. Suppose $\varphi(\overline p)\geq 2$ for some $p\in\R^n\setminus\{o\}$. 
	Set
	\begin{gather*}
	    F_u=\{\lambda p+\mu u:~\lambda\in\R,\mu>0\}\text{ for all }u\in \mathbb{S}^{n-1}\cap p^\bot.
	\end{gather*}
	Denote by $\mathcal{F}$ the family of all such $F_u$. It is clear that $\mathcal{F}$ is uncountable when $n\geq3$. By Lemma \ref{Lem:countable}, the family \begin{gather*}
	    \mathcal{F'}:=\{F\in\mathcal{F}:\varphi(\overline p\lor\overline y)\subset\lin\varphi(\overline p)\text{ for some }y\in F\}
	\end{gather*}
	is at most countable.
(By Remark \ref{Rem:0703-2225}, it is clear that $\varphi(\overline p)\subsetneq\varphi(\overline p\lor\overline y)$. 
Moreover, for distinguished $u,v\in\mathcal{S}^{n-1}\cap p^\bot$, choose $y_u\in F_u$ and $y_v\in F_v$ arbitrarily. Observe that $(\overline p\lor\overline{y_u})\land(\overline p\lor\overline{y_v})=\big([1,+\infty)[p,y_u]\big)\cap\big([1,+\infty)[p,y_v]\big)=\overline p$, since $p$, $y_u$ and $y_v$ are linearly independent.
Therefore, $\varphi(\overline p\lor\overline{y_u}) \cap \varphi(\overline p\lor\overline{y_v})=\varphi(\overline p)$, and then clearly $\varphi(\overline p\lor\overline{y_u}) \neq \varphi(\overline p\lor\overline{y_v})$.
Thus the countability of the family of such $\varphi(\overline p\lor\overline{y_u})$ implies the countability of $\mathcal F'$.)

\myvskip
\noindent\textit{Assertion I}. $\dim\varphi(\overline p)\leq n-1$.
\myvskip
    
	In fact, if $\lin\varphi(\overline p)=\R^n$, then $\mathcal{F}=\mathcal{F'}$, a contradiction. So, it completes Assertion I.

\myvskip		
\noindent\textit{Assertion II}. $\dim\varphi(\overline p)\leq n-2$.
\myvskip

	Indeed, suppose $\dim\varphi(\overline p)=n-1$.
	Similar to proving that $\mathcal  F'$ is at most countable, by replacing the surrounding space $\lin(\overline p)$ by $\R^n$, the family $\mathcal{F}\setminus\mathcal{F'}$ is at most countable, which is impossible. 
	Together with Assertion I, it completes Assertion II.
\myvskip		
\noindent\textit{Assertion III}. For every $k$-flat $S$ with $\dim\lin\{\overline p\cup S\}=k+2$ and $o\notin S$, there exists $x_1,\cdots,  x_{k+1}\in S$ such that
	\begin{gather*}
	    \dim\varphi(\overline p\lor\overline{x_1}\lor\overline{x_2}\lor\cdots\lor\overline{x_{k+1}})\geq\dim\varphi(\overline p)+k+1,
	\end{gather*}
	where $1\leq k\leq n-\dim\varphi(\overline p)-1$.
\myvskip
	The proof is by induction with respect to $k$, which is similar to that in Lemma \ref{Lem:EmptysetMapstoEmptyset}. For $k=1$, since $\mathcal{F'}$ is at most countable and
	$\mathcal{F''}:=\{F \in \mathcal{F}: F \cap S \neq \emptyset\}$ is uncountable, we may choose $x_1\in S$ such that $\dim\varphi(\overline p\lor\overline{x_1})\geq\dim\varphi(\overline p)+1$. Assume
	\begin{gather*}
	    \dim\varphi(\overline p\lor\overline{x_1}\lor\overline{y})\leq\dim\varphi(\overline p)+1 \text{ for all } y\in S,
	\end{gather*}
	then,
	\begin{gather*}
	    \varphi(\overline p\lor\overline{y})\subset\lin\varphi(\overline p\lor\overline{x_1})\text{ for all }y\in S.
	\end{gather*}
	By Lemma \ref{Lem:countable} with $K=\varphi(\overline p)$, the sets
	\begin{gather*}
	    \{y\in S:\dim\varphi(\overline p\lor\overline y)=\dim\varphi(\overline p)+1\}\text{ and }\{y\in S:\dim\varphi(\overline p\lor\overline y)=\dim\varphi(\overline p)\}
	\end{gather*}
	are both at most countable, a contradiction. (The proof is similar to proving that $\mathcal F'$ is at most countable.)
	For $k\geq 2$, by the hypothesis of induction, we may choose $(k-1)$-flat $T$ such that $T \subset S$, satisfying
	\begin{gather*}
	    \dim\varphi(\overline p\lor\overline{x_1}\lor\overline{x_2}\lor\cdots\lor\overline{x_{k}})\geq\dim\varphi(\overline p)+k
	\end{gather*}
	for some $x_1,x_2,\cdots,x_k\in T$. If
	\begin{gather*}
	    \dim\varphi(\overline p\lor\overline{x_1}\lor\overline{x_2}\lor\cdots\lor\overline{x_{k}}\lor\overline y)\leq\dim\varphi(\overline p)+k
	\end{gather*}
	for all $y\in S$, then
	\begin{gather*}
	    \varphi(\overline y)\subset\lin\varphi(\overline p\lor\overline{x_1}\lor\overline{x_2}\lor\cdots\lor\overline{x_{k}})\text{ for all }y\in S.
	\end{gather*}
	Set $K_T=\varphi(\overline p\lor\overline{x_1}\lor\overline{x_2}\lor\cdots\lor\overline{x_{k}})$.
	For every $(k-1)$-flat $T' \subset S$ parallel to $T$, it is valid that
	\begin{gather*}
	    \dim\varphi(\overline p\lor\overline{y_1}\lor\overline{y_2}\lor\cdots\lor\overline{y_{k}})\geq\dim\varphi(\overline p)+k
	\end{gather*}
	for some $y_1,y_2,\cdots,y_k\in T'$. 
	Set $K_{T'}=\varphi(\overline p\lor\overline{y_1}\lor\overline{y_2}\lor\cdots\lor\overline{y_{k}})$. 
	Clearly, $K_{T'}\subset\lin K_T$. 
    By Remark \ref{Rem:0703-2225}, $\varphi(\overline p) \subsetneq K_{T'}$.
    Also, we claim $K_{T'}\cap K_{T''}=\varphi(\overline p)$, if $T', T''\subset S$ are parallel to $T$ with $T'\cap T''=\emptyset$. 
	Thus Lemma \ref{Lem:countable} implies that there are at most countably many $(k-1)$-flats in $S$ disjoint with $T$, a contradiction. 
	It completes the proof of Assertion III.
	
	%Indeed, similar as the proof of $K_{T'}\cap K_{T''}=\varphi(\emptyset)$ in Lemma \ref{Lem:EmptysetMapstoEmptyset}, i
	The proof of the claim is the following. Indeed, it is sufficient to show that
	$(\overline p\lor\overline{z'_1}\lor\overline{z'_2}\lor\ldots\overline{z'_{k}})\land(\overline p\lor\overline{z''_1}\lor\overline{z''_2}\lor\ldots\overline{z''_{k}})\subset\overline p$, where $z'_1,\ldots,z'_k\in T'$ and $z''_1,\ldots,z''_k\in T''$.
    Since $\dim\lin(\overline p\cup S)=k+2$ and that $T',T''\subset S$ are parallel $(k-1)$-flats, we may choose $p_1,p_2\in\R^n$, $(k-1)$ dimensional linear subspace $V$, and $(n-k-2)$ dimensional linear subspace $W$ such that $\R^n=W\oplus V\oplus(\R p)\oplus(\R p_1)\oplus(\R p_2)$ with $S=p_1+\R p_2+V$, $T'=p_1+\lambda'p_2+V$, $T''=p_1+\lambda''p_2+V$ for some distinguished $\lambda',\lambda''\in\R$.
    Here $\oplus$ is the direct sum of linear spaces.
    If $y\in(\overline p\lor\overline{z'_1}\lor\overline{z'_2}\lor\ldots\overline{z'_{k}})\land(\overline p\lor\overline{z''_1}\lor\overline{z''_2}\lor\ldots\overline{z''_{k}})$,
    by Lemma \ref{Lem:0706-2239},
    we have
    \[y=\mu'((1-\nu')p+\nu'(p_1+\lambda'p_2+v'))=\mu''((1-\nu'')p+\nu''(p_1+\lambda''p_2+v'')),\]
    where $\mu',\mu''\ge1$, $\nu',\nu''\in[0,1]$, $v',v''\in V$.
    Thus, $\mu'\nu'=\mu''\nu''$, $\mu'=\mu''$, $\nu'=\nu''$. Since $\lambda'\ne\lambda''$, $\nu'=\nu''=0$ holds. Hence, $y\in\overline p$.

\myvskip
Finally, fix an $(n-2)$-flat $S$ such that $\dim\lin(\overline p\cup S)=n$.
We can choose an uncountable family $\mathcal{T}$ of pairwise disjoint $(n-\dim \varphi(\overline p)-1)$-flats contained in $S$.
For any $T,T' \in \mathcal{T}$, by Assertion III for $k=n-\dim\varphi(\overline p)-1$,
we may associate closed pseudo-cone $K_T$ and $K_{T'}$ such that $\dim K_T=\dim K_{T'}=n$, $\varphi(\overline p) \subsetneq K_T$, $\varphi(\overline p) \subsetneq K_{T'}$ and $K_T\cap K_{T'}=\varphi(\overline p)$.
Thus Lemma \ref{Lem:countable} implies that $\mathcal{T}$ is countable, a contradiction.
%In Assertion III, let $k=n-\dim\varphi(\overline p)-1$, choose an uncountable family $\mathcal{H}$ of pairwise disjoint $k$-flats such that $\dim\lin(\overline p\cup F)=k+2$ for every flat $F\in\mathcal{H}$.
%	There exist $z_1,z_2,\cdots,z_{k+1}\in F\in \mathcal{H}$ such that
%	\begin{gather*}
%	    \dim\varphi(\overline p\lor\overline{z_1}\lor\overline{z_2}\lor\cdots\lor\overline{z_{k+1}})\geq\dim\varphi(\overline p)+k+1=n.
%	\end{gather*}
%	Let $B_F=\varphi(\overline p\lor\overline{z_1}\lor\overline{z_2}\lor\cdots\overline{z_{k+1}})$. 
%	Such $B_F$ is uncountably many, which contradicts Lemma \ref{Lem:countable}. It completes the proof.
\end{proof}

For $L$ is an arc in $\mathbb{S}^1$, we say $\relint L$ is the relative interior of $L$ with respect to $\mathbb{S}^1$.

By Lemma \ref{Lem:PhiPSIsNotBarO}, one has $o\notin\varphi(\overline{x})$ for all $x\neq o$ if $n \ge 2$. %In the case $n=2$, 
Write
$$\Phi\colon\R^n\setminus\{o\}\rightarrow\mathbb{S}^{n-1},~ x\mapsto |x|^{-1}x,$$
and for $n=2$,
\begin{gather*}
\sigma(x)=\Phi\left( \bigcup_{\lambda>0}\varphi(\overline{\lambda x})\right)\text{ for } x\in\R^2\setminus\{o\},\\
U'=\{x\in\mathbb{S}^1:\sigma(x)\text{ is a singleton}\}.
\end{gather*}

Notice that $\varphi(\overline{\lambda x})$ is monotone with respect to $\lambda$.
Thus $\bigcup_{\lambda>0}\varphi(\overline{\lambda x})$ is convex.
It turns out that $\sigma(x)$ contains an open arc (with respect to $\mathbb{S}^1$) if it is not a singleton.
In fact, suppose different $a,b\in\sigma(x)$. There exists $\mu_a,\mu_b,\lambda_0>0$ such that $\mu_aa,\mu_bb\in\varphi(\overline{\lambda_0x})$. If $a+b=o$, then $o\in\varphi(\overline{\lambda_0x})$, which implies that $\varphi(\overline{\lambda_0x})=\R^2$, $\varphi(x)=\mathbb{S}^1$. If $a+b\neq o$, by letting $L$ be an arc contained in a semicircle with $a$ and $b$ as endpoints, one gets that $\Phi([\mu_aa,\mu_bb])=L$. Thus, $L\subset\sigma(x)$. Hence, $\sigma(x)$ contains an open arc.

Observe that $\sigma(x)\cap\sigma(y)=\emptyset$ if $R_x\neq R_y$. To obtain it, a little manipulation is needed. Suppose $a\in\sigma(x)\cap\sigma(y)$, i.e.  $\mu_1a\in\varphi(\overline{\lambda_1x})\cap\varphi(\overline{\lambda_1y})$ for large enough $\mu_1>0$ and small enough $\lambda_1>0$.
Hence, $\mu_1a\in\varphi(\overline{\lambda_1x})\land\varphi(\overline{\lambda_1y})=\varphi(\overline{\lambda_1x}\land\overline{\lambda_1y})=\varphi(\emptyset)=\emptyset$ by Lemma \ref{Lem:EmptysetMapstoEmptyset}, a contradiction.

Therefore, $\mathbb{S}^1\setminus U'$ is a set of at most countably many points.

Let
\begin{gather}\label{eq:03-11-1300}
	U=\begin{cases}
		U'\cap(-U') 		& n=2;\\
		\mathbb{S}^{n-1}	& n\geq3.
	\end{cases}
\end{gather}

%Write $T=\RU$. 
If $n=2$, then $\mathbb{S}^1\setminus U$ is a set of at most countably many points. 
Hence, for $n\geq2$, $U$ is dense in $\mathbb{S}^{n-1}$, and thus, $\R_{+}U$ is dense in $\R^n$.
We arrive at the following lemma which contains Lemma \ref{Lem:Point2PointForNGeq3}.
\begin{Lem}\label{Lem:Point2PointForNGeq2}
	If $n\geq2$, then $\dim{\varphi(\overline x)}=1$ for all $x\in \R_+U\setminus\{o\}$.
\end{Lem}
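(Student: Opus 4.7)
The plan is to split into the two cases built into the definition of $U$ in equation \eqref{eq:03-11-1300}. For $n\ge 3$ there is nothing new: $U=\mathbb{S}^{n-1}$, so $\R_+U\setminus\{o\}=\R^n\setminus\{o\}$ and the conclusion is exactly Lemma \ref{Lem:Point2PointForNGeq3}. So the real content is $n=2$.

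For $n=2$, given $x\in\R_+U\setminus\{o\}$, I would first note $x/|x|\in U=U'\cap(-U')$, so in particular $x/|x|\in U'$ and by the definition of $U'$ the set $\sigma(x/|x|)$ is a singleton, say $\{a\}\subset\mathbb{S}^1$. Since $\bigcup_{\lambda>0}\overline{\lambda x}$ depends only on the ray $R_x=R_{x/|x|}$, we have $\sigma(x)=\sigma(x/|x|)=\{a\}$. Unwinding the definition of $\sigma$, this says
\begin{equation*}
\Phi\Bigl(\bigcup_{\lambda>0}\varphi(\overline{\lambda x})\Bigr)=\{a\},
\end{equation*}
and in particular, taking $\lambda=1$, $\Phi(\varphi(\overline x))\subset\{a\}$. (The map $\Phi$ is well-defined on this union because by Lemma \ref{Lem:EmptysetMapstoEmptyset} and Lemma \ref{Lem:one2all} each $\varphi(\overline{\lambda x})$ is nonempty, and by Lemma \ref{Lem:PhiPSIsNotBarO} none of them equals $\R^2$, so each $\varphi(\overline{\lambda x})$ is a closed pseudo-cone and thus avoids the origin.) Hence $\varphi(\overline x)\subset(0,+\infty)\{a\}$.

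Finally I would observe that a nonempty closed pseudo-cone contained in the one-dimensional half-line $(0,+\infty)\{a\}$ must in fact be one-dimensional. Indeed, any $y\in\varphi(\overline x)$ has the form $y=ta$ with $t>0$, and the pseudo-cone property together with closedness forces $[1,+\infty)y=[t,+\infty)a\subset\varphi(\overline x)$, so $\dim\varphi(\overline x)=1$, as required.

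There is no substantial obstacle here; the lemma is essentially a bookkeeping step that packages Lemma \ref{Lem:Point2PointForNGeq3} together with the $n=2$ definition of $U$ into a uniform statement. The only thing requiring a little care is checking that $\varphi(\overline{\lambda x})$ is a genuine pseudo-cone (not $\emptyset$ or $\R^2$) so that $\Phi$ and the radial description make sense; this is what Lemmas \ref{Lem:EmptysetMapstoEmptyset}, \ref{Lem:one2all} and \ref{Lem:PhiPSIsNotBarO} are used for.
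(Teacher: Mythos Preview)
Your proposal is correct and follows exactly the approach the paper intends: for $n\ge 3$ it reduces to Lemma \ref{Lem:Point2PointForNGeq3}, and for $n=2$ it unwinds the definition of $U'$ (so $\sigma(x)$ is a singleton) together with Lemmas \ref{Lem:EmptysetMapstoEmptyset}, \ref{Lem:one2all} and \ref{Lem:PhiPSIsNotBarO} to conclude that $\varphi(\overline x)$ is a nonempty pseudo-cone contained in a single ray, hence one-dimensional. The paper does not write out a separate proof for this lemma---it treats it as an immediate consequence of the discussion preceding the statement---so your argument simply makes that implicit reasoning explicit.
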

	
By Lemma \ref{Lem:Point2PointForNGeq2}, we set $f:\R_+U\setminus\{o\}\rightarrow\R^2 \setminus \{o\}$ for $n\geq2$ such that 
\begin{gather*}\label{eq:0308-2}
\varphi(\overline x)=\overline{f(x)}.   
\end{gather*} 
We define a partial (not strict) order $\preceq$ on $\R^n,n\geq1$ by
\begin{gather*}
    a\preceq b\text{ if and only if }a=\lambda b\text{ for some }\lambda\in[0,1].
\end{gather*}
Also, $a\prec b$ if and only if $a\preceq b$ and $a\neq b$.

\begin{Lem}\label{Lem:fIncreasing}
	If $n\geq2$, $a,b\in \R_+U \setminus \{o\}$ and $a \preceq b$, then $f(a)\preceq f(b)$.
\end{Lem}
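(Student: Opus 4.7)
The plan is to reduce everything to monotonicity of $\varphi$ (Lemma \ref{Lem:EndoIsInc}) combined with the fact, already established in Lemma \ref{Lem:Point2PointForNGeq2}, that $\varphi(\overline{x})$ is a one-dimensional pseudo-cone and hence is itself of the form $\overline{y}$ for a unique $y\neq o$ (which is how $f$ was defined). Once we know this, $a\preceq b$ should translate directly into a reverse inclusion of the rays $\overline{a}$ and $\overline{b}$, which $\varphi$ preserves, giving the desired order between $f(a)$ and $f(b)$.

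Concretely, I would argue as follows. Since $a,b\in\R_+U\setminus\{o\}$ and $a\preceq b$, the definition of $\preceq$ gives $a=\lambda b$ for some $\lambda\in(0,1]$ (the case $\lambda=0$ is excluded because $a\neq o$). Then I compute directly
\[
\overline{a}=[1,+\infty)\lambda b=[\lambda,+\infty)b\supset[1,+\infty)b=\overline{b}.
\]
By Lemma \ref{Lem:EndoIsInc}, $\varphi(\overline{a})\supset\varphi(\overline{b})$, and by the definition of $f$ this reads $\overline{f(a)}\supset\overline{f(b)}$.

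It remains to observe that containment of two such rays forces $\preceq$. Since $f(b)\in\overline{f(b)}\subset\overline{f(a)}=[1,+\infty)f(a)$, there exists $\mu\geq1$ with $f(b)=\mu f(a)$. Hence $f(a)=\mu^{-1}f(b)$ with $\mu^{-1}\in(0,1]$, which is precisely $f(a)\preceq f(b)$.

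There is essentially no obstacle here: the whole content is packed into Lemma \ref{Lem:Point2PointForNGeq2}, which guarantees that $f$ makes sense and that $\varphi$ sends each ray $\overline{x}$ to another ray $\overline{f(x)}$. Once that is in hand, monotonicity of $\varphi$ under inclusion does all the work, and the claim is just a translation between the inclusion order on rays and the scalar order $\preceq$ on $\R^n$.
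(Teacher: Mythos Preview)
Your proof is correct and is essentially the same as the paper's, which simply states that the lemma follows directly from Lemma~\ref{Lem:EndoIsInc}. You have merely spelled out the translation between $a\preceq b$ and $\overline{b}\subset\overline{a}$ (and back), which is exactly the content the paper leaves implicit.
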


\begin{proof}
It follows directly from Lemma \ref{Lem:EndoIsInc}.
%	Since $a \preceq b$, it follows that $\overline b\subset \overline a$, and thus, $\varphi(\overline b)\subset\varphi(\overline a)$. 
%	We have $\overline{f(b)}\subset\overline{f(a)}$, i.e. $f(a)\preceq f(b)$.
\end{proof}

\begin{Lem}\label{Lem:OneRayMapstoOneRay}
	If $n\geq2$, $a,b\in \R_+U \setminus  \{o\}$ and $R_a\cap R_b=\emptyset$, then
	\begin{gather*}
		R_{f(a)}\cap R_{f(b)}=\emptyset.
	\end{gather*}
\end{Lem}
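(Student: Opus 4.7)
My plan is to argue by contradiction, using the only two facts about $\varphi$ that we really need here: the endomorphism property (which passes through $f$ via Lemma \ref{Lem:Point2PointForNGeq2}) and the non-triviality results already established, namely $\varphi(\emptyset)=\emptyset$ (Lemma \ref{Lem:EmptysetMapstoEmptyset}) and $\varphi(\overline y)\neq\emptyset$ for all $y\in\R^n$ (Lemma \ref{Lem:one2all}).

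First I would observe the elementary geometric fact that two rays $R_{f(a)}$ and $R_{f(b)}$ emanating from $o$ either intersect only at $o$ (hence are disjoint as subsets of $\R^n\setminus\{o\}$) or coincide. So assuming $R_{f(a)}\cap R_{f(b)}\neq\emptyset$ for contradiction, I get $R_{f(a)}=R_{f(b)}$, meaning $f(a)=\lambda f(b)$ for some $\lambda>0$. By symmetry I may assume $\lambda\in(0,1]$, i.e.\ $f(a)\preceq f(b)$. Then from $\overline{f(a)}=[1,+\infty)\{\lambda f(b)\}=[\lambda,+\infty)\{f(b)\}\supset[1,+\infty)\{f(b)\}=\overline{f(b)}$, I conclude $\varphi(\overline b)=\overline{f(b)}\subset\overline{f(a)}=\varphi(\overline a)$.

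Next, since $\varphi$ is an endomorphism, this inclusion rewrites as
\begin{equation*}
\varphi(\overline a\land\overline b)=\varphi(\overline a)\land\varphi(\overline b)=\varphi(\overline b).
\end{equation*}
Now I would use the hypothesis $R_a\cap R_b=\emptyset$: since $\overline a=[1,+\infty)\{a\}\subset R_a$ and $\overline b\subset R_b$, this forces $\overline a\cap\overline b=\emptyset$, i.e.\ $\overline a\land\overline b=\emptyset$. By Lemma \ref{Lem:EmptysetMapstoEmptyset}, the left-hand side equals $\varphi(\emptyset)=\emptyset$, so $\varphi(\overline b)=\emptyset$, contradicting Lemma \ref{Lem:one2all}.

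I do not anticipate a real obstacle here; the whole argument is a short chain once one notices that the collinearity of $f(a)$ and $f(b)$ from a common origin forces $\overline{f(a)}$ and $\overline{f(b)}$ to be comparable under inclusion, which in turn converts into a comparability of $\varphi(\overline a)$ and $\varphi(\overline b)$ that Lemmas \ref{Lem:EmptysetMapstoEmptyset} and \ref{Lem:one2all} make impossible. The only mild subtlety is keeping the WLOG step honest (swapping the roles of $a$ and $b$ if $\lambda>1$), and remembering that the definitions $R_x=(0,+\infty)\{x\}$ and $\overline x=[1,+\infty)\{x\}$ guarantee both lie in the same open ray, so $R_a\cap R_b=\emptyset$ cleanly implies $\overline a\cap\overline b=\emptyset$.
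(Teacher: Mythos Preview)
Your argument is correct and uses the same ingredients as the paper's proof: from $R_a\cap R_b=\emptyset$ one gets $\overline a\land\overline b=\emptyset$, then the endomorphism property together with Lemma \ref{Lem:EmptysetMapstoEmptyset} yields $\overline{f(a)}\cap\overline{f(b)}=\emptyset$, which forces $R_{f(a)}\cap R_{f(b)}=\emptyset$. The only cosmetic difference is that the paper proceeds directly (observing that $\overline{f(a)}\cap\overline{f(b)}=\emptyset$ already excludes $R_{f(a)}=R_{f(b)}$), whereas you wrap the same observation in a contradiction with a WLOG step and an appeal to Lemma \ref{Lem:one2all}.
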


\begin{proof}
	If $R_a\cap R_b=\emptyset$, then $\overline a\land\overline b=\emptyset$, which deduces that $\varphi(\overline a)\land\varphi(\overline b)=\varphi(\emptyset)$.
	By Lemma \ref{Lem:EmptysetMapstoEmptyset}, $\varphi(\emptyset)=\emptyset$.	
	Therefore, $\overline{f(a)} \cap \overline {f(b)} =\emptyset$ which gives $R_{f(a)}\cap R_{f(b)}=\emptyset$.
\end{proof}

\begin{Lem}\label{Lem:AllMapstoAll}
	If $n\geq2$, then $\varphi(\overline o)=\overline o$. In particular, $R_{f(-a)}=R_{-f(a)}$ for all $a\in \R_+U\setminus\{o\}$.
\end{Lem}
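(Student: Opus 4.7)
The plan is to prove $\varphi(\overline o)=\overline o$ by contradiction; the ``in particular'' statement then follows immediately. Fix any $a\in\R_+U\setminus\{o\}$; note that $-a\in\R_+U\setminus\{o\}$ as well, since $U$ is symmetric in both cases of its definition. Because $\overline a=[1,\infty)a$ and $\overline{-a}=[1,\infty)(-a)$ are disjoint and $o=\tfrac12 a+\tfrac12(-a)\in[a,-a]\subset[\overline a,\overline{-a}]$, we have $\overline a\land\overline{-a}=\emptyset$ and $\overline a\lor\overline{-a}=\overline o$. Applying $\varphi$ and using $\varphi(\emptyset)=\emptyset$ from Lemma \ref{Lem:EmptysetMapstoEmptyset} yields
\[\varphi(\overline o)=\overline{f(a)}\lor\overline{f(-a)}\qquad\text{and}\qquad\overline{f(a)}\cap\overline{f(-a)}=\emptyset.\]

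Now assume for contradiction that $\varphi(\overline o)\ne\overline o$. Lemma \ref{Lem:const} combined with $\varphi(\emptyset)=\emptyset$ forces $K_0:=\varphi(\overline o)$ to be a nonempty closed pseudo-cone, so $\overline{f(a)}\lor\overline{f(-a)}=\cl[\overline{f(a)},\overline{f(-a)}]$ by the definition of $\lor$ on $\PC$. I would first exclude $f(a)$ and $f(-a)$ from lying on one line through $o$: on the same ray they produce nested subrays, violating the disjointness above; on opposite rays a short computation gives $o\in\cl[\overline{f(a)},\overline{f(-a)}]$, which forces $K_0=\R^n$ and contradicts the assumption. Hence $f(a),f(-a)$ are linearly independent, $K_0$ is $2$-dimensional, and the direct calculation
\[\cl[\overline{f(a)},\overline{f(-a)}]=\bigl\{uf(a)+vf(-a):u,v\ge 0,\ u+v\ge 1\bigr\}\]
gives $\rec K_0=\R_+f(a)+\R_+f(-a)$, whose two extreme rays are precisely $R_{f(a)}$ and $R_{f(-a)}$. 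Since $K_0$ does not depend on $a$, the unordered pair $\{R_{f(a)},R_{f(-a)}\}$ must equal a fixed pair $\{R_1,R_2\}$ for every $a\in\R_+U\setminus\{o\}$.

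By Lemma \ref{Lem:OneRayMapstoOneRay}, $R_a\ne R_b$ implies $R_{f(a)}\ne R_{f(b)}$, so the map $a\mapsto R_{f(a)}$ is injective on rays; hence at most two distinct rays $R_a$ can occur. But $U$ is dense in $\mathbb{S}^{n-1}$, so $\R_+U\setminus\{o\}$ contains uncountably many distinct rays, a contradiction. Therefore $\varphi(\overline o)=\overline o=\R^n$, and $\overline{f(a)}\lor\overline{f(-a)}=\R^n$ forces $o\in\cl[\overline{f(a)},\overline{f(-a)}]$, which by the same line-through-$o$ analysis is possible only when $R_{f(-a)}=R_{-f(a)}$. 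The main obstacle is the rigidity step showing that, under $K_0\ne\R^n$, the two extreme rays of $\rec K_0$ must be exactly $\{R_{f(a)},R_{f(-a)}\}$ irrespective of $a$; once that is in hand, Lemma \ref{Lem:OneRayMapstoOneRay} together with the density of $U$ in $\mathbb{S}^{n-1}$ closes the argument at once.
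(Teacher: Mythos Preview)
Your proof is correct and follows essentially the same approach as the paper's: both argue by contradiction, use $\overline a\lor\overline{-a}=\overline o$ to write $\varphi(\overline o)=\overline{f(a)}\lor\overline{f(-a)}$, extract from this that the pair of rays $\{R_{f(a)},R_{f(-a)}\}$ is independent of $a$ (you phrase this via the extreme rays of $\rec K_0$, the paper via $[R_{f(-b)},R_{f(b)}]=(\R_+\varphi(\overline o))\setminus\{o\}=[R_{f(-c)},R_{f(c)}]$), and then invoke Lemma~\ref{Lem:OneRayMapstoOneRay} for the contradiction. The only cosmetic difference is that the paper picks just two linearly independent $b,c\in\R_+U$ and immediately gets $R_{f(b)}\in\{R_{f(c)},R_{f(-c)}\}$, whereas you use the density of $U$ to produce more than two rays; both close the argument equally well.
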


\begin{proof}
	Suppose that $\varphi(\overline o)\neq\overline o$.
%	Since $\varphi$ is not constant, by Lemma \ref{Lem:const} and Lemma \ref{Lem:EmptysetMapstoEmptyset}, $\varphi(\overline o) \neq \emptyset$.
	Choose linearly independent $b, c\in \R_+U\setminus\{o\}$, then
	\begin{gather*}
	\varphi(\overline b)\lor\varphi(\overline{-b}) 
	= \varphi(\overline{o})
	=\varphi(\overline c)\lor\varphi(\overline{-c}).
	\end{gather*}
	Hence by \eqref{eq:313-1} and $o \notin \varphi(\overline o)$,
	\begin{align*}
	\varphi(\overline b)\lor\varphi(\overline{-b})&= [\overline{f(-b)}, \overline{f(b)}]\\
	\varphi(\overline c)\lor\varphi(\overline{-c})&=[\overline{f(-c)}, \overline{f(c)}].
	\end{align*}
	Thus $ [R_{f(-b)},R_{f(b)}]=\left(\R_{+} \varphi(\overline o)\right)\setminus\{o\} =[R_{f(-c)},R_{f(c)}]$. It follows that $R_{f(b)}=R_{f(c)}$ or $R_{f(b)}=R_{f(-c)}$.
	It contradicts $(R_{f(b)}\cup R_{f(-b)}) \cap (R_{f(c)}\cup R_{f(-c)})=\emptyset$ which follows from Lemma \ref{Lem:OneRayMapstoOneRay}.
	Therefore, $\varphi(\overline o)=\overline o$.
		
	Now for $a\in \R_+U\setminus\{o\}$, it is clear that
	\begin{gather*}
	    \overline{f(a)}\lor\overline{f(-a)}=\varphi(\overline a)\lor\varphi(\overline{-a})=\varphi(\overline o)=\overline o.
	\end{gather*}
	But both $f(a)$ and $f(-a)$ are not the origin. 
	Therefore, $f(-a)\in-R_{f(a)}$, i.e. $R_{f(-a)}=R_{-f(a)}$.
		
	It completes the proof.
\end{proof}

\begin{Rem}\label{Rem:Describef}
	Now, we could extend $f$ to a function mapping $\R_+U$ to $\R^n$, by defining $f(o)=o$ additionally. Hence, $a\preceq b \Rightarrow f(a)\preceq f(b)$, which generalizes Lemma \ref{Lem:fIncreasing}.
\end{Rem}	
    
\begin{Lem}\label{lem:0305-1}
	If $n\geq2$, $a,b,c \in \R_+U\setminus\{o\}$, then $c \in [R_a,R_b]$ implies
	\begin{gather*}
		f(c)\in[R_{f(a)},R_{f(b)}].  
	\end{gather*}
\end{Lem}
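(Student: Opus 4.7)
The plan is to rescale $c$ so that it lands in the pseudo-cone $\overline a\vee\overline b$, apply the endomorphism $\varphi$ to transfer the resulting inclusion to the image side, and then drop back to the ray level.

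First I would dispose of the degenerate cases. If $a,b$ are linearly dependent, then $[R_a,R_b]\subset\R a$ forces $c\in R_a\cup R_b$; the same reduction applies when $a,b$ are linearly independent but one of the coefficients in the decomposition $c=\lambda a+\mu b$ vanishes. In either case, writing (say) $c=ta$ with $t>0$ and using Lemma \ref{Lem:fIncreasing} together with $f(c)\neq o$ (which comes from $\dim\varphi(\overline c)=1$, Lemma \ref{Lem:Point2PointForNGeq2}), one concludes $R_{f(c)}=R_{f(a)}$, so $f(c)\in[R_{f(a)},R_{f(b)}]$.

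So I may assume $a,b$ are linearly independent and $c=\lambda a+\mu b$ with $\lambda,\mu>0$. Before the main step I would verify that $f(a)$ and $f(b)$ are linearly independent, so that $[R_{f(a)},R_{f(b)}]$ is a genuine two-dimensional cone closed under positive scaling: from $R_a\cap R_b=\emptyset$ and $R_a\cap R_{-b}=\emptyset$, applying Lemma \ref{Lem:OneRayMapstoOneRay} to the pairs $(a,b)$ and $(a,-b)$ (noting $-b\in\R_+U$ by the symmetry of $U$) and using $R_{f(-b)}=R_{-f(b)}$ from Lemma \ref{Lem:AllMapstoAll}, the rays $R_{f(a)}$ and $R_{f(b)}$ are neither equal nor opposite.

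The heart of the argument is the rescaling trick: pick $\alpha\geq\max\{1,(\lambda+\mu)^{-1}\}$, so that $\alpha c=\alpha\lambda\,a+\alpha\mu\,b$ lies in $[1,+\infty)[a,b]=\overline a\vee\overline b$ by Lemma \ref{Lem:0706-2239}. Because $\overline a\vee\overline b$ is a pseudo-cone, $\overline{\alpha c}\subset\overline a\vee\overline b$, and applying $\varphi$ yields
\[\overline{f(\alpha c)}=\varphi(\overline{\alpha c})\subset\varphi(\overline a\vee\overline b)=\varphi(\overline a)\vee\varphi(\overline b)=\overline{f(a)}\vee\overline{f(b)}=[1,+\infty)[f(a),f(b)]\subset[R_{f(a)},R_{f(b)}].\]
Finally, since $\alpha\geq1$ implies $c\preceq\alpha c$, Lemma \ref{Lem:fIncreasing} gives $f(c)\preceq f(\alpha c)$, so $f(c)$ is a positive scalar multiple of $f(\alpha c)$ (the scalar is nonzero because $f(c)\neq o$); as $[R_{f(a)},R_{f(b)}]$ is closed under positive dilation, $f(c)\in[R_{f(a)},R_{f(b)}]$. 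I expect the only real annoyance to be bookkeeping around the degenerate configurations; the substantive content is the rescaling $c\mapsto\alpha c$, which converts the hypothesis $c\in[R_a,R_b]$ into a pseudo-cone inclusion where the endomorphism property of $\varphi$ becomes applicable.
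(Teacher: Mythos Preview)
Your proof is correct and follows essentially the same strategy as the paper's: use a rescaling to convert the hypothesis $c\in[R_a,R_b]$ into a pseudo-cone inclusion, then apply the endomorphism property of $\varphi$. The only cosmetic difference is that the paper rescales $a$ and $b$ instead of $c$ (writing $c=\alpha a+\beta b$ and observing $c\in\overline{\alpha a}\vee\overline{\beta b}$ directly), which avoids your final pullback from $f(\alpha c)$ to $f(c)$; also, your explicit check that $f(a),f(b)$ are linearly independent is not strictly needed, since $\overline{f(a)}\vee\overline{f(b)}=\varphi(\overline a\vee\overline b)\neq\R^n$ already follows from Lemma~\ref{Lem:PhiPSIsNotBarO}.
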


\begin{proof}
	In the case that $a$ and $b$ are linearly independent, we may assume that $c=\alpha a+\beta b$ with $\alpha,\beta>0$. Then $c\in \overline{\alpha a} \lor \overline{\beta b}$, which implies
	\begin{gather*}
	\overline{f(c)}=\varphi(\overline c)\subset \varphi(\overline{\alpha a}\lor\overline{\beta b})=\overline{f(\alpha a)}\lor\overline{f(\beta b)}
	\subset[R_{f(a)},R_{f(b)}].
	\end{gather*}
	Hence, $f(c)\in[R_{f(a)},R_{f(b)}]$.
	
	In the case $a\in R_b$, we have $R_a=R_b$. 
	Then $c\in R_a$. By Lemma \ref{Lem:fIncreasing}, $f(c)\in R_{f(a)}=[R_{f(a)},R_{f(b)}]$.

	In the case $-a\in R_b$, we have $c\in R_a$ or $c \in R_{b}$. By Lemma \ref{Lem:fIncreasing}, 
	$f(c)\in R_{f(a)}\cup R_{f(b)}\subset[R_{f(a)},R_{f(b)}]$.
\end{proof}

\begin{Lem}\label{Lem:Plane2Plane}
	If $n\geq 2$, $F$ is a 2-dimensional linear subspace of $\R^n$, then $\dim \lin f(F\cap \R_+U)=2$.
\end{Lem}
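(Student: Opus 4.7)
The plan is to show $\dim\lin f(F\cap\R_+U)=2$ by fixing a basis $a_0,b_0$ of $F$ inside $\R_+U$ and proving that (i) $f(a_0),f(b_0)$ are linearly independent, and (ii) every $f(c)$ with $c\in F\cap\R_+U$ lies in $\lin\{f(a_0),f(b_0)\}$.

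For the existence of such $a_0,b_0$, observe that by \eqref{eq:03-11-1300}, $F\cap U$ is either all of $F\cap\mathbb{S}^{n-1}$ (when $n\ge 3$) or a cocountable subset of $F\cap\mathbb{S}^{n-1}=\mathbb{S}^1$ (when $n=2$), and is therefore uncountable, so two linearly independent directions can be chosen. For step (i), the linear independence of $a_0,b_0$ gives $R_{a_0}\cap R_{b_0}=\emptyset$ and $R_{a_0}\cap R_{-b_0}=\emptyset$; applying Lemma \ref{Lem:OneRayMapstoOneRay} together with Lemma \ref{Lem:AllMapstoAll} (which identifies $R_{f(-b_0)}=R_{-f(b_0)}$) forces $R_{f(a_0)}$ to be disjoint from both $R_{f(b_0)}$ and $R_{-f(b_0)}$, so the nonzero vector $f(b_0)$ cannot be any scalar multiple of $f(a_0)$. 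This gives $\dim\lin f(F\cap\R_+U)\ge 2$.

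For step (ii), the key is that $U=-U$ by \eqref{eq:03-11-1300}, so $\pm a_0,\pm b_0$ all lie in $\R_+U$. Given any $c\in F\cap\R_+U$ with $c\ne o$, write $c=\alpha a_0+\beta b_0$ and, by switching $a_0$ or $b_0$ to its negative if necessary, reduce to $\alpha,\beta\ge 0$, so $c\in[R_{a_0'},R_{b_0'}]$ for a suitable pair $a_0'\in\{\pm a_0\}$, $b_0'\in\{\pm b_0\}$. Lemma \ref{lem:0305-1} then gives $f(c)\in[R_{f(a_0')},R_{f(b_0')}]$, and Lemma \ref{Lem:AllMapstoAll} places these rays inside $\R f(a_0)$ and $\R f(b_0)$ respectively, so $f(c)\in\lin\{f(a_0),f(b_0)\}$. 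Combining (i) and (ii) yields $\dim\lin f(F\cap\R_+U)=2$.

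The one real subtlety is the sign bookkeeping: Lemma \ref{lem:0305-1} only controls $f$ on a single cone spanned by two rays in $\R_+U$, whereas $F$ decomposes into four such cones relative to $\{\pm a_0,\pm b_0\}$. The symmetry $U=-U$ together with the ray identity of Lemma \ref{Lem:AllMapstoAll} is exactly what allows these four pieces to be glued into a single $2$-dimensional linear span.
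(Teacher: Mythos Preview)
Your proof is correct and follows the same approach as the paper: pick two linearly independent vectors $u,v\in F\cap\R_+U$, use Lemmas \ref{Lem:OneRayMapstoOneRay} and \ref{Lem:AllMapstoAll} to see that $f(u),f(v)$ are linearly independent, and then invoke Lemma \ref{lem:0305-1} to trap every $f(c)$ in the plane $\lin\{f(u),f(v)\}$. The paper compresses step (ii) into the single line ``it is enough to show that $x\in[R_u,R_v]\Rightarrow f(x)\in[R_{f(u)},R_{f(v)}]$'' and leaves the passage to the other three cones implicit; your explicit treatment of the sign bookkeeping via $U=-U$ and $R_{f(-a_0)}=R_{-f(a_0)}$ simply spells out what the paper takes for granted.
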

	
\begin{proof}
	Choose $u,v\in F\cap \R_+U$ such that $u$ and $v$ are linearly independent. By Lemma \ref{Lem:OneRayMapstoOneRay} and Lemma \ref{Lem:AllMapstoAll}, $R_{f(u)}$ and $R_{f(v)}$ are linearly independent. Clearly, it is enough to show that
	\begin{gather*}
	    x\in[R_u,R_v]\Rightarrow f(x)\in[R_{f(u)},R_{f(v)}],
	\end{gather*}
	which follows directly from Lemma \ref{lem:0305-1}.
\end{proof}
    
\begin{Lem}\label{Lem:FMapsRay2Ray}
	If $n\geq2$, then
	\begin{gather*}
		\lim_{\lambda\rightarrow+\infty}|f(\lambda x)|=+\infty\text{ for all }x\in \R_+U\setminus\{o\}.
	\end{gather*}
\end{Lem}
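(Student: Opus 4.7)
The argument proceeds by contradiction. First, I would assume $|f(\lambda x)|$ stays bounded as $\lambda\to\infty$: the inclusion $\overline{\lambda x}\subset\overline{x}$ for $\lambda\ge 1$ together with monotonicity of $\varphi$ forces $f(\lambda x)\in R_{f(x)}$, so $f(\lambda x)=\mu(\lambda)f(x)$ with $\mu$ non-decreasing on $[1,\infty)$ by Lemma~\ref{Lem:fIncreasing}, and the boundedness assumption forces $\mu(\lambda)\nearrow c_0$ for some finite $c_0\ge 1$.

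Next I would choose $y\in U$ linearly independent from $x$ (possible since $U$ is dense in $\mathbb{S}^{n-1}$ and $n\ge 2$) and work with the family of two-dimensional pseudo-cones $K_\lambda=\overline{\lambda x}\lor\overline{y}\in\PC$. By Lemmas~\ref{Lem:OneRayMapstoOneRay} and~\ref{Lem:Plane2Plane}, the images $\varphi(K_\lambda)=\overline{\mu(\lambda)f(x)}\lor\overline{f(y)}$ lie in the plane $F'=\lin(f(x),f(y))$ and decrease set-theoretically to the non-trivial pseudo-cone $\overline{c_0 f(x)}\lor\overline{f(y)}$ (which is a proper subset of $\varphi(\overline{x}\lor\overline{y})$ precisely because $c_0<\infty$).

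To manufacture the contradiction, I would introduce a family of auxiliary rays $\overline{z_t}$ with $z_t=-x+ty\in\R_+U$ for $t$ in an uncountable subset of $(0,\infty)$, each satisfying $\overline{z_t}\land K_\lambda=\emptyset$ and $\overline{z_t}\lor K_\lambda\ne\R^n$ (so $K_\lambda\lor\overline{z_t}\in\PC$ is a proper pseudo-cone, verified by checking that the convex hull of $\{\lambda x, y, z_t\}$ misses the origin). Remark~\ref{Rem:0703-2225} then yields $\varphi(K_\lambda)\subsetneq\varphi(K_\lambda\lor\overline{z_t})=\varphi(K_\lambda)\lor\overline{f(z_t)}$. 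Since the rays $\overline{f(z_t)}$ are pairwise distinct by Lemma~\ref{Lem:OneRayMapstoOneRay} and lie in the quadrant $-\R_+f(x)+\R_+f(y)$ of $F'$ by Lemma~\ref{lem:0305-1}, letting $\lambda\to\infty$ and using the boundedness of $\mu$ would constrain these new rays to a fixed bounded region inside a finite enlargement of $\overline{c_0 f(x)}\lor\overline{f(y)}$; a planar application of Lemma~\ref{Lem:countable} would then force the family of such $t$ to be at most countable, contradicting the uncountable choice of $t$.

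The main obstacle is making the ``bounded region'' claim rigorous: one must show that $\overline{f(z_t)}$ does not escape to infinity and instead sits inside a fixed proper pseudo-cone of $F'$, as a consequence of $\mu$ being bounded. This requires a careful lattice computation combining the endomorphism identity $\varphi(A\land B)=\varphi(A)\land\varphi(B)$ with the explicit coordinate descriptions of $K_\lambda\lor\overline{z_t}$ in $F$ (to compute pairwise intersections of the enlarged pseudo-cones in a way that falls into the hypotheses of Lemma~\ref{Lem:countable}), and is the counterpart for rays not contained in the positive cone of the planar countability arguments used in Lemmas~\ref{Lem:EmptysetMapstoEmptyset} and~\ref{Lem:Point2PointForNGeq3}.
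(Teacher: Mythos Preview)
Your proposal has a genuine gap that cannot be repaired along the lines you suggest.

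The application of Lemma~\ref{Lem:countable} you aim for requires a family $\{K_t\}$ with $K_t\cap K_{t'}=K$ for $t\neq t'$. You would need $(K_\lambda\lor\overline{z_t})\land(K_\lambda\lor\overline{z_{t'}})=K_\lambda$. This is false: take $n=2$, $x=e_1$, $y=e_2$, $\lambda=2$, $t=1$, $t'=3$; then the point $(-\tfrac12,2)$ lies in both $[1,\infty)[2e_1,e_2,-e_1+e_2]$ and $[1,\infty)[2e_1,e_2,-e_1+3e_2]$ but not in $K_\lambda=[1,\infty)[2e_1,e_2]$. So the pairwise-intersection hypothesis collapses before you can invoke the lemma.

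More fundamentally, your ``bounded region'' claim has no mechanism behind it. Knowing that $\mu(\lambda)=|f(\lambda x)|/|f(x)|$ is bounded for \emph{one} direction $x$ gives you no control over $|f(z_t)|$ for the transversal directions $z_t=-x+ty$: there is no inclusion or lattice identity that feeds the bound on $\mu$ into a bound on $f(z_t)$. The endomorphism identities only relate $\varphi$ of joins and meets, and $\overline{z_t}$ is neither contained in nor disjoint-from-in-a-useful-way any set built out of $\overline{\lambda x}$ alone. You yourself flag this as ``the main obstacle,'' but it is not a technical detail---it is the entire content of the lemma.

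The paper's proof is structurally different and explains why: one cannot argue pointwise. Instead, define $g(a)=\lim_{\lambda\to\infty}|f(\lambda a)|$ on a $2$-plane $F$ and its upper envelope $h(a)=\inf_{\varepsilon>0}\sup_{b\in W\cap B(a,\varepsilon)}g(b)$. First one shows (Assertion~I) that $h=+\infty$ propagates across arcs: if $h(a)=h(b)=+\infty$ then $g(c)=+\infty$ for every $c$ strictly between them, by picking nearby $d,e$ with $|f(\beta d)|,|f(\gamma e)|$ arbitrarily large and trapping $f(c')$ above any prescribed level. If $h\not\equiv+\infty$ one then obtains a whole arc $L$ on which $g$ is \emph{uniformly} bounded by some $\Xi$; this uniform bound is what your argument lacks, and it is used decisively in Assertion~II to show that each $\Phi(f(a))$, $a\in(\relint L)\cap W$, is isolated from all other $\Phi(f(b))$ by a fixed-radius arc (via disjointness of $\varphi(\overline a)$ from a suitable closed half-plane image). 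This forces $(\relint L)\cap W$ to be countable, contradicting density of $W$. The passage from a single bad direction to an arc with a uniform bound is the heart of the matter, and your outline does not contain it.
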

	
\begin{proof}
For any $a \in \R_+U\setminus\{o\}$, set $g(a)=\lim_{\lambda\rightarrow+\infty}|f(\lambda a)|$ . 
		
Choose an arbitrary $2$-dimensional subspace $F\subset\R^n$. By Lemma \ref{Lem:Plane2Plane}, $f(F \cap \R_+U)$ is a subset of a $2$-dimensional plane.
Let $W=U\cap F$. One has
	\begin{gather}\label{eq:03-11-1033}
		W \text{ is dense in } \mathbb{S}^{n-1}\cap F.
	\end{gather}
	We need to show that
	\begin{gather}\label{eq:03-11-1038}
	    g(a)=+\infty\text{ for all }a\in W.
	\end{gather}
Set 
$$h(a)=\inf_{\varepsilon>0}\sup_{b\in W\cap B(a,\varepsilon)} g(b) \text{ for any } a \in \mathbb{S}^{n-1}\cap F.$$
It is clear that $g(a)=+\infty$ implies $h(a)=+\infty$, where $a\in W$.

\myvskip
\noindent\textit{Assertion I.} Let $a,b\in \mathbb{S}^{n-1}\cap F$ be linear independent. If $h(a)=h(b)=+\infty$ and $c\in (\relint [R_a,R_b])\cap W$, then
	\begin{gather*}
	    g(c)=+\infty.
	\end{gather*}
\myvskip	
Indeed, suppose $g(c)<+\infty$. 
Choose $a',b' \in W$ with $a'+b' \neq 0$ such that $[R_a,R_b]\subset\relint[R_{a'},R_{b'}]$.
By Lemma \ref{lem:0305-1}, $f(c)\in [R_{f(a')},R_{f(b')}]$.
Since 
\begin{gather*}
  \bigcup_{\lambda>0} \left[\lambda \Phi(f(a')),\lambda\Phi(f(b'))\right]=[R_{f(a')},R_{f(b')}],
\end{gather*}
there is an $\alpha>0$ such that
\begin{gather*}
  g(c)\Phi(f(c))\in \left[\alpha \Phi(f(a')),\alpha\Phi(f(b'))\right].
\end{gather*}
The assumption $h(a)=h(b)=+\infty$ implies that there exist $d\in [R_{a'},R_{b'}] \cap B(a,\frac{|a-c|}{2}) \cap W$ and $e \in [R_{a'},R_{b'}] \cap B(b,\frac{|b-c|}{2}) \cap W$ and $\beta,\gamma>0$ such that $|f(\beta d)|>\alpha+1$, $|f(\gamma e)|>\alpha+1$.
By Lemma \ref{lem:0305-1}, $f(\beta d), f(\gamma e)\in [R_{f(a')},R_{f(b')}]$.
Hence,
\begin{gather}\label{eq:03-12-2042}
	g(c)\Phi(f(c)) \notin \overline{f(\beta d)}\lor\overline{f(\gamma e)}.
\end{gather}
Choose $c'\in R_c$ such that $c'\in\overline{\beta d}\lor\overline{\gamma e}$, and thus,
\begin{gather*}
f(c')\in\varphi(\overline {c'}) \subset\varphi(\overline{\beta d} \lor \overline{\gamma e}) =\overline{f(\beta d)}\lor\overline{f(\gamma e)}.
\end{gather*}
Together with \eqref{eq:03-12-2042}, it implies that
\begin{gather*}
	g(c)\Phi(f(c))\prec f(c').
\end{gather*}
One gets $|f(c')|>g(c)$, which contradicts the definition of $g$ (by Lemma \ref{Lem:fIncreasing}).  
It completes Assertion I.
\myvskip
Let $A=\{a\in \mathbb{S}^{n-1}\cap F:h(a)=+\infty\}$. 
Notice that Assertion I implies: Let $a,b\in\mathbb S^{n-1}\cap F$ be non-antipodal and such that $h(a) = h(b) = +\infty$. Then for every $c$ on the shortest arc connecting $a$ and $b$ we have $h(c) = +\infty$. Hence, this arc is contained in $A$.
If $A$ is not contained in any closed hemisphere, then $A =\mathbb S^{n-1}\cap F$.

If $A=\mathbb{S}^{n-1}\cap F$, by Assertion I, the desired statement is trivial. Suppose $A\neq \mathbb{S}^{n-1}\cap F$. By Assertion I, there is a closed
arc $L \subset \mathbb{S}^{n-1}\cap F$ which is contained in an open hemisphere such that
\begin{gather}\label{eq:03-10-2217}
	h(a)<+\infty\text{ for all }a\in L.
\end{gather}

For every $a\in L$, by $h(a)<+\infty$, we may choose $\varepsilon_a, \Xi_a>0$ such that $g(b)<\Xi_a$ for all $b\in W\cap B(a,\varepsilon_a)$. Since $L$ is compact, a finite family $\{B(a_i,\varepsilon_{a_i}):a_i\in L\}_{i=1}^k$ covers $L$. Put $\Xi=\max\{\Xi_{a_i},i=1,2,\cdots,k\}$.
It is clear that
\begin{gather}\label{eq:0308-1}
  g(a)<\Xi\text{ for all }a\in L\cap W.
\end{gather}
\myvskip
\noindent\textit{Assertion II.} For $a\in(\relint L)\cap W$, there exist an arc $L_a\subset\mathbb{S}^{n-1}\cap \lin f(\R_+ W)
%V\cap \R_+U)
$ such that $\Phi(f(a))$ is the midpoint of $L_a$ and that
\begin{gather*}
  \Phi(f(b))\notin L_a\text{ for all }b\in L\cap W\setminus\{a\}.
\end{gather*}

\myvskip
		
Indeed, let $c\in a^\bot\cap F$ such that $|c|=1$. For $b\in (H_{c,0}^+\cap L\cap W)\setminus\{a\}$, there exists a sufficiently large $\lambda_b>0$ such that $\overline{\lambda_b b}\subset \{\lambda a+\mu c\in\R^n:\lambda\in\R,\mu\in[1,+\infty)\}$.
Together with %\eqref{eq:03-13-1014},
\eqref{eq:0308-1} and the ``increasing" of $f$ (Lemma \ref{Lem:fIncreasing}), we have
\begin{gather}\label{eq:0310-1}
  \Xi\Phi(f(b))\in \overline {f(\lambda_b b)} = \varphi(\overline{\lambda_b b})\subset\varphi(\{\lambda a+\mu c\in\R^n:\lambda\in\R,\mu\in[1,+\infty)\}).
\end{gather}
By Lemma \ref{Lem:EmptysetMapstoEmptyset}, together with $\overline a\cap \{\lambda a+\mu c\in\R^n:\lambda\in\R,\mu\in[1,+\infty)\}=\emptyset$, it follows that
\begin{gather}\label{eq:03-10-1309}
  \varphi(\overline a)\cap\varphi(\{\lambda a+\mu c\in\R^n:\lambda\in\R,\mu\in[1,+\infty)\})=\varphi(\emptyset)=\emptyset.
\end{gather}
Since \eqref{eq:0308-1}, we have $f(a)\preceq\Xi\Phi(f(a))$, i.e. $\Xi\Phi(f(a))\in\overline{f(a)}=\varphi(\overline a)$. By the closedness of $\varphi(\{\lambda a+\mu c\in\R^n:\lambda\in\R,\mu\in[1,+\infty)\})$ and \eqref{eq:03-10-1309}, there is some $\varepsilon_1>0$ such that
\begin{gather*}
  B\big(\Xi\Phi(f(a)),\Xi\varepsilon_1 \big)\cap \varphi(\{\lambda a+\mu c\in\R^n:\lambda\in\R,\mu\in[1,+\infty)\})=\emptyset.
\end{gather*}
Remembering \eqref{eq:0310-1}, it is clear that
\begin{gather*}
  \Xi\Phi(f(b))\notin B\big(\Xi\Phi(f(a)),\Xi\varepsilon_1 \big),
\end{gather*}
which implies
\begin{gather*}
  |\Phi(f(b))-\Phi(f(a))|=\Xi^{-1}|\Xi\Phi(f(b))-\Xi\Phi(f(a))|>\varepsilon_1.
\end{gather*}
Now, we get
\begin{gather}\label{eq:03-10-13191}
  B\big(\Phi(f(a)),\varepsilon_1 \big)
  \cap\bigcup_{b\in (H_{c,0}^+\cap  L)\setminus\{a\})} \Phi(f(b))=\emptyset.
\end{gather}
Similarly,
\begin{gather}\label{eq:03-10-13192}
  B(\Phi(f(a)),\varepsilon_2)\cap\bigcup_{b\in (H_{c,0}^-\cap  L)\setminus\{a\})} \Phi(f(b))=\emptyset
\end{gather}
for some $\varepsilon_2>0$. Let $\varepsilon=\min\{\varepsilon_1,\varepsilon_2\}$. Together with \eqref{eq:03-10-13191} and \eqref{eq:03-10-13192}, we have
\begin{gather*}
	B(\Phi(f(a)),\varepsilon)\cap\bigcup_{b\in L\setminus\{a\}} \Phi(f(b))=\emptyset.
\end{gather*}
Set
\begin{gather*}
	L_a:=B(\Phi(f(a)),\varepsilon)\cap\mathbb{S}^{n-1}\cap\lin f(\R_+W),
\end{gather*}
which completes the proof of Assertion II.
\myvskip
\noindent\textit{Assertion III. } The points on $(\relint L)\cap W$ are at most countably many.
\myvskip
In fact, choose $L'_a$ be an open arc with the same midpoint and a half length of $L_a$ in Assertion II. Hence, every point $a$ on $(\relint L)\cap W$ has image $\Phi(f(a))$ belonging to the arc $L'_a$, and all these arcs are disjoint pairwise. 
One gets that the points on $(\relint L)\cap W$ are at most countable many. It completes the proof of Assertion III.

\myvskip

Finally, by \eqref{eq:03-11-1033}, Assertion III is not possible, thus, \eqref{eq:03-11-1038} is verified, which completes the proof.
\end{proof}

\begin{Lem}\label{Lem:AlwaysRegular}
  If $n\geq2$, $a\in \R_+U\setminus\{o\}$, then there is some $b$ such that %$b\in \R_+U\setminus\{o\}$ such that 
  $a\prec b$ and $f(a)\prec f(b)$.
  %for some $b\in \R_+U$ such that $a\prec b$.
\end{Lem}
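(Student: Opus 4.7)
The plan is to extract the conclusion almost directly from Lemma \ref{Lem:FMapsRay2Ray} together with the monotonicity property of $f$ recorded in Lemma \ref{Lem:fIncreasing} (and extended in Remark \ref{Rem:Describef}). Given $a \in \R_+U \setminus \{o\}$, I note first that $\R_+U$ is a union of rays through the origin, so $\mu a \in \R_+U \setminus \{o\}$ for every $\mu > 0$, which guarantees that $f(\mu a)$ is defined for all such $\mu$.

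Next I would invoke Lemma \ref{Lem:FMapsRay2Ray} on $x = a$, which tells us that
\[
    \lim_{\mu \to +\infty} |f(\mu a)| = +\infty.
\]
In particular, one can choose a value $\mu > 1$ large enough so that $|f(\mu a)| > |f(a)|$. Setting $b = \mu a$, we have $a = \mu^{-1} b$ with $\mu^{-1} \in (0,1)$, hence $a \prec b$.

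The final step is to verify that this $b$ also satisfies $f(a) \prec f(b)$. Since $a \preceq b$, Lemma \ref{Lem:fIncreasing} gives $f(a) \preceq f(b)$, meaning $f(a) = \lambda f(b)$ for some $\lambda \in [0,1]$. Taking norms yields $|f(a)| = \lambda |f(b)|$. Combined with the strict inequality $|f(a)| < |f(b)|$ obtained above, we must have $\lambda < 1$, so $f(a) \neq f(b)$ and therefore $f(a) \prec f(b)$, as required.

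There is essentially no obstacle here: the substantive work has already been absorbed into Lemma \ref{Lem:FMapsRay2Ray}, whose proof supplies the unbounded growth of $|f(\mu a)|$ along the ray; the present lemma merely packages that unboundedness together with the order-preserving property of $f$ into the precise statement about $\prec$.
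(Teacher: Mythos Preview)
Your proof is correct and follows essentially the same approach as the paper's own argument: both invoke Lemma~\ref{Lem:FMapsRay2Ray} to obtain $|f(\mu a)|\to+\infty$, pick $\mu>1$ with $|f(\mu a)|>|f(a)|$, and combine this with the order preservation of Lemma~\ref{Lem:fIncreasing} to conclude $f(a)\prec f(\mu a)$. Your write-up is slightly more explicit (e.g., checking $\mu a\in\R_+U$ and unpacking the definition of $\preceq$), but the content is identical.
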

\begin{proof}
  By Lemma \ref{Lem:FMapsRay2Ray}, $\lim_{\lambda\rightarrow+\infty}|f(\lambda a)|=+\infty$.
  Together with Lemma \ref{Lem:fIncreasing},
  $f(\lambda_0 a)\in R_{f(a)}$ and $|f(\lambda_0 a)|>|f(a)|$ for sufficiently large $\lambda_0>1$, which implies $f(a)\prec f(\lambda_0 a)$. 
  Put $b=\lambda_0 a$. The proof is completed.
\end{proof}

For $o \neq b \in \R_+U$, if there is $a\neq o$ such that $a\prec b$ and $f(a)\prec f(b)$, then $b$ is called a \textit{regular point}. Clearly, if $b$ is a regular point, and $b\prec c$, then $c$ is also a regular point.
	
\begin{Lem}\label{Lem:Collinear}
	If $n\geq2$, then
	\begin{gather*}
		a,b,c\in \R_+U\text{ are collinear in this order }\Rightarrow f(a),f(b),f(c) \text{ are collinear in this order.}
	\end{gather*}
\end{Lem}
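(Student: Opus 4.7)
The plan is to split on whether the line $\ell$ through $a,b,c$ contains the origin. If $o\in\ell$, I write $a=\alpha v$, $b=\beta v$, $c=\gamma v$ for some $v\in\mathbb S^{n-1}$ with $\beta$ between $\alpha$ and $\gamma$; monotonicity of $f$ along $R_v$ (Lemma~\ref{Lem:fIncreasing} and Remark~\ref{Rem:Describef}) combined with $R_{f(-x)}=R_{-f(x)}$ from Lemma~\ref{Lem:AllMapstoAll} will confine $f(a),f(b),f(c)$ to the line $\R f(v)$ in the correct order, after a short sign analysis. I henceforth assume $\ell$ does not pass through $o$, so $a,c$ are linearly independent and $b=(1-t)a+tc$ with $t\in(0,1)$.

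The main observation is that, since $b\in[a,c]$, the convex hull identity $[a,b,c]=[a,c]$ together with Lemma~\ref{Lem:0706-2239} yields
\[\overline a\lor\overline b\lor\overline c=[1,+\infty)[a,c]=\overline a\lor\overline c.\]
Applying $\varphi$ and Lemma~\ref{Lem:0706-2239} on the image side gives $\overline{f(a)}\lor\overline{f(b)}\lor\overline{f(c)}=\overline{f(a)}\lor\overline{f(c)}$, so $f(b)\in[1,+\infty)[f(a),f(c)]$. As $f(a),f(c)$ are linearly independent by Lemma~\ref{Lem:OneRayMapstoOneRay}, I write uniquely $f(b)=\mu\bigl((1-\sigma)f(a)+\sigma f(c)\bigr)$ with $\mu\ge 1$ and $\sigma\in[0,1]$, and the desired statement becomes $\mu=1$.

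To force $\mu=1$, I will use the planar intersection identity
\[\overline{\lambda b}\land(\overline a\lor\overline c)=\overline b\quad\text{for every }\lambda\in(0,1),\]
which follows from linear independence of $a,c$: any equation $\alpha b=\nu((1-s)a+sc)$ with $\alpha\ge\lambda$, $\nu\ge 1$, $s\in[0,1]$, expanded via $b=(1-t)a+tc$, forces $\alpha=\nu\ge 1$. Applying $\varphi$ gives $\overline{f(\lambda b)}\land(\overline{f(a)}\lor\overline{f(c)})=\overline{f(b)}$. Writing the intersection of the wedge $W:=\overline{f(a)}\lor\overline{f(c)}$ with $R_{f(b)}$ as $[\mu_0,+\infty)\Phi(f(b))$ with $\mu_0\le|f(b)|$, this translates to $\max(|f(\lambda b)|,\mu_0)=|f(b)|$ for every $\lambda\in(0,1)$. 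Therefore producing any $\lambda_0\in(0,1)$ with $|f(\lambda_0 b)|<|f(b)|$ forces $\mu_0=|f(b)|$; then $f(b)$ is the minimum-norm point of $W$ on $R_{f(b)}$ and must lie on the base $[f(a),f(c)]$, yielding $\mu=1$.

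Such a $\lambda_0$ will be produced by applying Lemma~\ref{Lem:AlwaysRegular} to $b/2$ to obtain $b_*\succ b/2$ on $R_b$ with $f(b/2)\prec f(b_*)$; if $b_*\in(b/2,b]$ then monotonicity gives $|f(b/2)|<|f(b_*)|\le|f(b)|$, so $\lambda_0=1/2$ suffices. The hard part will be the residual ``plateau'' configuration in which every witness produced by Lemma~\ref{Lem:AlwaysRegular} applied to points of $R_b$ near the origin lies strictly beyond $b$, i.e., $f\equiv f(b)$ on $(0,1]b$. I expect to exclude this by combining the unboundedness $|f(\lambda b)|\to+\infty$ from Lemma~\ref{Lem:FMapsRay2Ray} with the countability bound in Lemma~\ref{Lem:countable}: such a plateau would force the uncountable family $\{\overline{\lambda b}\lor\overline d:\lambda\in(0,1)\}$, for a suitably chosen transverse ray $\overline d$, to collapse under $\varphi$ to a single pseudo-cone, contradicting that countability. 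Once $\mu=1$ is secured, the parameter $\sigma\in[0,1]$ delivers both the collinearity of $f(a),f(b),f(c)$ and the specified order.
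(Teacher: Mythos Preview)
Your reduction is essentially the paper's Assertion~I, just phrased in coordinates: from $\overline{\lambda b}\land(\overline a\lor\overline c)=\overline b$ you deduce $\overline{f(\lambda b)}\cap W=\overline{f(b)}$, and then a single $\lambda_0\in(0,1)$ with $f(\lambda_0 b)\prec f(b)$ pins $f(b)$ to the base segment $[f(a),f(c)]$. That part is fine and matches the paper.

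The gap is your ``plateau'' elimination. What you need is exactly that $b$ is a \emph{regular point}: some $b'\prec b$ with $f(b')\prec f(b)$. Your proposed countability argument does not do this. If $f$ is constant on $(0,1]b$, then for any transverse $d$ the sets $\overline{\lambda b}\lor\overline d$ are \emph{nested} in $\lambda$, so their $\varphi$-images are also nested (in fact all equal to $\overline{f(b)}\lor\overline{f(d)}$); there is no pairwise-disjoint-above-a-common-base structure, so Lemma~\ref{Lem:countable} has nothing to bite on. ``Collapsing to a single pseudo-cone'' is not a contradiction---$\varphi$ is not known to be injective at this stage. Lemma~\ref{Lem:FMapsRay2Ray} only controls $|f(\lambda b)|$ as $\lambda\to+\infty$ and says nothing about $\lambda\to 0^+$, so it does not rule out a plateau on $(0,1]b$ either.

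The paper closes this gap by a geometric bootstrap (its Assertion~II): pick, via Lemma~\ref{Lem:AlwaysRegular}, a regular $y$ linearly independent from $b$, choose $z$ with $y\in\relint[b,z]$ and $z'\succ z$ with $f(z)\prec f(z')$; the line through $y,z'$ meets $(o,b)$ at some $b'$. Now apply the already-proved collinearity (Assertion~I, valid because $y$ is regular) to the triples $b,y,z$ and $b',y,z'$: $f(b),f(y),f(z)$ and $f(b'),f(y),f(z')$ are each collinear, and $f(z)\prec f(z')$ forces $f(b')\prec f(b)$. This makes $b$ regular, and your $\mu=1$ argument then finishes. You should replace the countability sketch with this bootstrap.
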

	
\begin{proof}
Without loss of generality, we assume $a,b,c$ are distinguishable. 
If $o\in\aff\{a,b,c\}$, by Lemma \ref{Lem:fIncreasing} and Lemma \ref{Lem:AllMapstoAll}, there is nothing to prove. 
Now, suppose that $o\notin\aff\{a,b,c\}$.
\myvskip
\noindent\textit{Assertion I.} If $b$ is a regular point, then $f(a),f(b),f(c)$ are collinear in this order.
	
\myvskip
	
In fact, since $\overline b\subset\overline a\lor\overline c$, we have
\begin{gather*}
	f(b)\in\overline{f(b)}\subset\overline{f(a)}\lor\overline{f(c)}.
\end{gather*}
By the regularity of $b$, there is $b'\prec b$ such that $f(b')\prec f(b)$.
If $f(b)\notin[f(a),f(c)]$, then
\begin{gather*}
	\overline{f(b)}\cap(\overline{f(a)}\lor\overline{f(c)})\subsetneq\overline{f(b')}\cap(\overline{f(a)}\lor\overline{f(c)}),
\end{gather*}
i.e. $\varphi(\overline b\land(\overline a\lor\overline c))\subsetneq\varphi(\overline{b'}\land(\overline a\lor\overline c))$, which contradicts $\overline b\land(\overline a\lor\overline c)=\overline{b'}\land(\overline a\lor\overline c)$. 
It completes the proof of Assertion I.
\myvskip
\noindent\textit{Assertion II.} Every point $x\in \R_+U\setminus\{o\}$ is regular.
\myvskip
Indeed, by Lemma \ref{Lem:AlwaysRegular}, we may choose a regular $y\in \R_+U$ linearly independent from $x$.  
Further choose $z\in \R_+U$ such that $y\in\relint[x,z]$. 
By Lemma \ref{Lem:AlwaysRegular}, there is $z\prec z'$ such that $f(z)\prec f(z')$. By the convexity of the triangle $[o,x,z]$, the line $\aff\{y,z'\}$ intersects $\relint[o,x]$ at a point $x'\in \R_+U$.
		
By Assertion I, $f(x),f(y),f(z)$ are collinear, as well as $f(x'),f(y),f(z')$. Together with $f(z)\prec f(z')$, we have $f(x')\prec f(x)$, It completes the proof of Assertion II, and thus, also the proof of the lemma.
\end{proof}

\begin{Lem}
	If $n\geq2$, $a,b,c\in \R_+U$ are collinear in this order such that $o\notin\aff\{a,b,c\}$, then
	\begin{gather}\label{eq:03-11-1552}
		\lim_{\lambda\rightarrow 0+}|f(\lambda b)|\leq
		\max\{\lim_{\lambda\rightarrow 0+}|f(\lambda a)|,\lim_{\lambda\rightarrow 0+}|f(\lambda c)|\}.
	\end{gather}
	Further if  $\lim_{\lambda\rightarrow 0+}|f(\lambda a)|+\lim_{\lambda\rightarrow 0+}|f(\lambda c)|>0$ and $a,b,c$ are distinguishable, then
	\begin{gather}\label{eq:03-11-1553}
		\lim_{\lambda\rightarrow 0+}|f(\lambda b)|<
		\max\{\lim_{\lambda\rightarrow 0+}|f(\lambda a)|,\lim_{\lambda\rightarrow 0+}|f(\lambda c)|\}.
	\end{gather}
\end{Lem}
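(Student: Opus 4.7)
My plan is to apply Lemma \ref{Lem:Collinear} to the rescaled triple $\lambda a,\lambda b,\lambda c$ for each $\lambda>0$ and then pass to the limit $\lambda\to 0+$. First, for fixed $x\in\R_+U\setminus\{o\}$, Lemma \ref{Lem:fIncreasing} applied to $\lambda_1 x\preceq\lambda_2 x$ (for $0<\lambda_1\le\lambda_2$) gives $f(\lambda x)\in R_{f(x)}\cup\{o\}$ for every $\lambda>0$ and $|f(\lambda x)|$ non-decreasing in $\lambda$. Hence both the scalar limit $\ell_x:=\lim_{\lambda\to 0+}|f(\lambda x)|$ and the vector limit $p_x:=\lim_{\lambda\to 0+}f(\lambda x)$ exist, with $|p_x|=\ell_x$ and $p_x\in R_{f(x)}$ whenever $\ell_x>0$. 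Write $p:=p_a$, $r:=p_b$, $q:=p_c$.

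For each $\lambda>0$, the points $\lambda a,\lambda b,\lambda c\in\R_+U$ are collinear in the same order, so Lemma \ref{Lem:Collinear} yields $f(\lambda b)\in[f(\lambda a),f(\lambda c)]$ and by convexity of $|\cdot|$,
\[|f(\lambda b)|\le\max\{|f(\lambda a)|,|f(\lambda c)|\}.\]
Letting $\lambda\to 0+$ establishes \eqref{eq:03-11-1552}.

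For the strict inequality \eqref{eq:03-11-1553}, suppose toward a contradiction that $\ell_b=M:=\max\{\ell_a,\ell_c\}$ under the additional hypotheses. Since $\ell_a+\ell_c>0$, we have $M>0$, so $r\ne o$. Taking limits in $f(\lambda b)\in[f(\lambda a),f(\lambda c)]$ yields $r\in[p,q]$ with $|r|=\max\{|p|,|q|\}$. The norm attains its maximum on a closed segment only at an endpoint---a convexity fact one verifies directly both when $p,q$ are linearly independent (strict convexity of $|\cdot|$ on the relative interior) and when they lie on a common line through $o$ (where $|r|>0$ rules out the only truly degenerate subcase)---so $r=p$ or $r=q$. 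If $r=p$, then $r\in R_{f(a)}\cap R_{f(b)}$, hence $R_{f(a)}=R_{f(b)}$, and the contrapositive of Lemma \ref{Lem:OneRayMapstoOneRay} forces $R_a=R_b$; since $a,b\in\aff\{a,b,c\}$ with $o\notin\aff\{a,b,c\}$, the ray $R_a$ meets this line at a single point, so $a=b$, contradicting distinctness. The case $r=q$ is symmetric, forcing $b=c$. The main obstacle is the endpoint-attainment step for $|\cdot|$ on $[p,q]$, whose edge cases (proportional $p,q$, or $p=o$ or $q=o$) must be handled by separate convexity computations, but all are cleanly resolved by the positivity $|r|=M>0$.
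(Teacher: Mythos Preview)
Your proof is correct and follows essentially the same line as the paper's: apply Lemma \ref{Lem:Collinear} to $\lambda a,\lambda b,\lambda c$, pass to the limit using the monotonicity from Lemma \ref{Lem:fIncreasing}, and then use that the three limit points $p_a,p_b,p_c$ lie on distinct rays $R_{f(a)},R_{f(b)},R_{f(c)}$ (via Lemma \ref{Lem:OneRayMapstoOneRay}) to rule out equality. The only difference is presentational: the paper additionally invokes Lemma \ref{Lem:AllMapstoAll} to record that $o\notin\aff\{f(\lambda a),f(\lambda b),f(\lambda c)\}$ and then states the equality characterization in one sentence, whereas you carry out the endpoint-of-segment argument for the Euclidean norm explicitly and reach the contradiction through $R_{f(a)}=R_{f(b)}\Rightarrow R_a=R_b\Rightarrow a=b$.
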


\begin{proof}
    Since $a, b, c$ are collinear in this order and $o\notin\aff\{a,b,c\}$, one has that $\lambda a$, $\lambda b$, $\lambda c$  are collinear in this order for all $\lambda>0$, and $o\notin\aff\{\lambda a,\lambda b,\lambda c\}$.
    
    Hence, Lemma \ref{Lem:Collinear} gives that $f(\lambda a),f(\lambda b),f(\lambda c)$ are collinear in this order. 
    Moreover, by Lemma \ref{Lem:OneRayMapstoOneRay} and Lemma \ref{Lem:AllMapstoAll}, $o\notin\aff\{f(\lambda a),f(\lambda b),f(\lambda c)\}$.
    
By Lemma \ref{Lem:fIncreasing}, one gets
\begin{gather*}
\lim_{\lambda\rightarrow 0+}f(\lambda a)\in \R_+f(a),~ \lim_{\lambda\rightarrow 0+}f(\lambda b)\in \R_+f(b),~ \lim_{\lambda\rightarrow 0+}f(\lambda c)\in \R_+f(c).
\end{gather*}
It is also clear that
$\lim_{\lambda\rightarrow 0+}f(\lambda a),~\lim_{\lambda\rightarrow 0+}f(\lambda b),~\lim_{\lambda\rightarrow 0+}f(\lambda c)$ are collinear in this order.
Thus \eqref{eq:03-11-1552} holds and the equality holds only if $\lim_{\lambda\rightarrow 0+}f(\lambda a) = \lim_{\lambda\rightarrow 0+}f(\lambda c)=o$ or $a,b,c$ are not distinguishable.
It completes the proof.
%Thus, if $\lim_{\lambda\rightarrow 0+}f(\lambda a)=\lim_{\lambda\rightarrow 0+}f(\lambda c)=o$, then
%$\lim_{\lambda\rightarrow 0+}f(\lambda b)=o$, which implies \eqref{eq:03-11-1552}; 
%If $\lim_{\lambda\rightarrow 0+}f(\lambda a)\neq o$ or $\lim_{\lambda\rightarrow 0+}f(\lambda c)\neq o$, %i.e. $\lim_{\lambda\rightarrow 0+}|f(\lambda a)|+\lim_{\lambda\rightarrow 0+}|f(\lambda c)|>0$,
%then \eqref{eq:03-11-1553} holds. It completes the proof.
\end{proof}

\begin{Lem}\label{Lem:LimitesTo0ForN=2}
	If $n=2$, then $\lim_{\lambda\rightarrow0+}|f(\lambda x)|=0$ for all $x\in U$.
\end{Lem}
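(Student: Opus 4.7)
The plan is to argue by contradiction, assuming $\alpha := \lim_{\lambda \to 0^+}|f(\lambda v_0)| > 0$ for some $v_0 \in U$. By Lemma \ref{Lem:fIncreasing}, $g^-(x) := \lim_{\lambda \to 0^+}|f(\lambda x)|$ exists for $x \in \R_+U\setminus\{o\}$ and is constant on rays, so it descends to $\tilde g\colon U \to [0,\infty)$; write $G(x) := \lim_{\lambda\to0^+}f(\lambda x) = \tilde g(\Phi(x))\Phi(f(x))$, so $G(v_0) = \alpha\Phi(f(v_0)) =: p \neq o$.

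The key technical step is a scaled-limit version of Lemma \ref{Lem:Collinear}: for linearly independent $v_1, w \in U$ and $t > 0$ with $\Phi(v_1+tw) \in U$, the triple $\lambda v_1,\ \lambda(v_1+tw)/2,\ t\lambda w$ is collinear in this order on the line $\lambda v_1+\R w$, which misses the origin. By Lemma \ref{Lem:Collinear}, so are their images, and passing $\lambda \to 0^+$ yields that $G(v_1), G(v_1+tw), G(w)$ are collinear in this order. I will use this in two ways. If $\tilde g(w)=0$ and $\tilde g(v_1) > 0$, the middle limit must lie on $[p,o] \subset R_{f(v_1)}$; a nonzero middle would force $R_{f(v_1+tw)} = R_{f(v_1)}$, hence by Lemma \ref{Lem:OneRayMapstoOneRay} the impossible $R_{v_1+tw}=R_{v_1}$, so $\tilde g\equiv 0$ on $A(v_1,w) \cap U$, where $A(v_1,w) := \{\Phi(v_1+tw):t>0\}$. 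If instead both $\tilde g(v_1),\tilde g(w) > 0$, then $R_{f(v_1)}$ and $R_{f(w)}$ are distinct and non-opposite by Lemmas \ref{Lem:OneRayMapstoOneRay} and \ref{Lem:AllMapstoAll}, so the segment $[G(v_1),G(w)]$ misses the origin and $|\cdot|$ on it attains its maximum at an endpoint; this yields the arc bound $\tilde g(v) \leq \max\{\tilde g(v_1),\tilde g(w)\}$ for $v \in A(v_1,w) \cap U$.

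With these two consequences I split into cases. Case~1: some $w \in U$ linearly independent from $v_0$ satisfies $\tilde g(w)=0$. Then $\tilde g \equiv 0$ on $A(v_0,w) \cap U$, and the strict one-sided inequality \eqref{eq:03-11-1553} applied to the line $v_0+\R w$ through $v_0$ forces $\tilde g > \alpha$ on the opposite arc $A(v_0,-w) \cap U$. Picking any $u \in A(v_0,-w) \cap U$, zero-propagation with base $u$ (where $\tilde g(u) > 0$) paired with an appropriate ``other'' endpoint ($w$ when $\tilde g(-v_0) > 0$, or $-v_0$ when $\tilde g(-v_0) = 0$) places zeros of $\tilde g$ along an arc that either passes through $v_0$ itself or overlaps with $A(v_0,-w) \cap U$ where $\tilde g > \alpha$ has been established, producing a contradiction in each subcase.

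Case~2: $\tilde g > 0$ everywhere on $U$. Pick any $v_1 \in U$ linearly independent from $v_0$; the four short arcs $A(v_0,v_1), A(v_1,-v_0), A(-v_0,-v_1), A(-v_1,v_0)$ cover $\mathbb{S}^1 \setminus \{\pm v_0,\pm v_1\}$, so the arc bound gives $\sup_U \tilde g \leq \max\{\tilde g(\pm v_0),\tilde g(\pm v_1)\}$, whence the supremum $M := \sup_U \tilde g$ is attained at some vertex $v^* \in \{\pm v_0,\pm v_1\} \subset U$. But \eqref{eq:03-11-1553} applied on any line through $v^*$ not through $o$ produces directions on one side with $\tilde g > \tilde g(v^*) = M$, contradicting $\tilde g \leq M$ on $U$. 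The main obstacle will be the clean execution of the propagation step in Case~1 across its subcases; the remainder is a direct assembly of the limit-collinearity identity, the strict one-sided inequality, and Lemma \ref{Lem:OneRayMapstoOneRay}.
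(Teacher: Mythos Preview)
Your route differs from the paper's. The paper argues in one stroke: setting $\xi=\sup_{x\in U}\tilde g(x)$ (your $\tilde g$), it first shows $\xi<\infty$ via \eqref{eq:03-11-1552}, then picks $b_1,b_2,b_3\in U$ with $o\in\relint[b_1,b_2,b_3]$ and $\tilde g(b_1)>0$, together with edge points $c_i\in\relint[b_j,b_k]$ with $o\in\relint[c_1,c_2,c_3]$; inequality \eqref{eq:03-11-1552} then gives $\tilde g(x)\le\max_i\tilde g(c_i)$ for \emph{every} $x\in U$, while \eqref{eq:03-11-1553} forces $\max_i\tilde g(c_i)<\max_i\tilde g(b_i)\le\xi$, a contradiction---no case split, no zero-propagation. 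Your Case~2 (attained maximum plus \eqref{eq:03-11-1553}) is in effect a slick four-vertex variant of this same idea; the zero-propagation device in Case~1 is a genuinely new observation and is correct as stated.

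There is, however, a small logical gap: your dichotomy is not exhaustive. Case~1 asks for a zero of $\tilde g$ at some $w\in U$ \emph{linearly independent from} $v_0$, whereas Case~2 asks for $\tilde g>0$ on all of $U$; the scenario $\tilde g(-v_0)=0$ with $\tilde g>0$ on $U\setminus\{-v_0\}$ falls into neither (recall $-v_0\in U$). The patch is immediate with your own tool: zero-propagation from any $v_1\in U$ linearly independent from $v_0$ toward $-v_0$ plants zeros of $\tilde g$ along $A(v_1,-v_0)\cap U$, reducing to Case~1. Alternatively, note that \eqref{eq:03-11-1552} already gives your ``arc bound'' without the positivity hypothesis on the endpoints, so the attained-max argument of Case~2 runs verbatim (with $v^\ast\neq -v_0$ since $\tilde g(-v_0)=0<M$). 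Finally, the subcase split inside Case~1 on $\tilde g(-v_0)$ is unnecessary: once $u\in A(v_0,-w)\cap U$ has $\tilde g(u)>0$, the short arc $A(u,w)$ \emph{always} contains $v_0$ (write $u=\Phi(v_0-t_0w)$ and solve $u+tw\in R_{v_0}$ for $t=t_0/|v_0-t_0w|>0$), so zero-propagation from $u$ toward $w$ yields $\tilde g(v_0)=0$ directly.
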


\begin{proof}
Set
\begin{gather}\label{eq:03-13-1451}
    \xi=\sup_{x\in U}\lim_{\lambda\rightarrow0+}|f(\lambda x)|.
\end{gather}
There are $\{x_k\}_{k=1}^\infty\subset U$ such that $\lim x_k=a\in\mathbb{S}^1$ and $\lim_{k\rightarrow+\infty}\lim_{\lambda\rightarrow0+}|f(\lambda x_k)|=\xi$.
Without loss of generality, we may assume that $\{x_k\}_{k=1}^\infty\subset\relint[R_b,R_c]$ with $b,c\in U$ and $b+c\neq o$.
Thus, there are two sequences $\{\mu_k\}_{k=1}^\infty$,  $\{\nu_k\}_{k=1}^\infty$ of positive real numbers such that $\mu_kb$, $x_k$ and $\nu_kc$ are collinear.
By \eqref{eq:03-11-1552}, one has
\begin{gather*}
	\lim_{\lambda\rightarrow0+}|f(\lambda x_k)|\leq\max\{\lim_{\lambda\rightarrow0+}|f(\lambda \mu_kb)|,\lim_{\lambda\rightarrow0+}|f(\lambda \nu_kc)|\}=\max\{\lim_{\lambda\rightarrow0+}|f(\lambda b)|,\lim_{\lambda\rightarrow0+}|f(\lambda c)|\}.
\end{gather*}
Therefore $\xi<+\infty$.

Now	suppose $\xi>0$.
	Thus, $\lim_{\lambda\rightarrow0+}|f(\lambda b_1)|>0$ for some $b_1\in U$. By the density of $U$ in $\mathbb S^1$, we may
	choose different points $b_2,b_3\in U$, which are different from $b_1$, such that $o\in\relint[b_1,b_2,b_3]$  and
	\begin{gather}\label{eq:03-13-1427}
	    \lim_{\lambda\rightarrow0+}|f(\lambda b_1)|+\lim_{\lambda\rightarrow0+}|f(\lambda b_2)|+\lim_{\lambda\rightarrow0+}|f(\lambda b_3)|>0.
	\end{gather}
	Further choose different points $c_1,c_2,c_3\in %\big(\bd[b_1,b_2,b_3]\big)\cap 
	\R_+U$ such that $o\in\relint[c_1,c_2,c_3]$, $c_1\in\relint[b_2,b_3]$, $c_2\in\relint[b_3,b_1]$, $c_3\in\relint[b_1,b_2]$.
	
	We claim
	\begin{align}
	    \lim_{\lambda\rightarrow 0+}|f(\lambda x)|&\leq \max\{\lim_{\lambda\rightarrow 0+}|f(\lambda c_1)|,\lim_{\lambda\rightarrow 0+}|f(\lambda c_2)|,\lim_{\lambda\rightarrow 0+}|f(\lambda c_3)|\}\label{eq:03-13-1357}\\
	    &<\max\{\lim_{\lambda\rightarrow 0+}|f(\lambda b_1)|,\lim_{\lambda\rightarrow0+}|f(\lambda b_2)|,\lim_{\lambda\rightarrow 0+}|f(\lambda b_3)|\}\label{eq:03-13-1447}
	\end{align}
	for all $x \in U$.
	Set $\xi_0:=\max\{\lim_{\lambda\rightarrow 0+}|f(\lambda c_1)|,\lim_{\lambda\rightarrow 0+}|f(\lambda c_2)|,\lim_{\lambda\rightarrow 0+}|f(\lambda c_3)|$.
	By \eqref{eq:03-13-1451}, then it is  clear that  
\begin{gather*}
\lim_{\lambda\rightarrow 0+}|f(\lambda x)| 
\leq \xi_0
<\xi  
\end{gather*}
	for all $x\in U$,
	which contradicts the definition of $\xi$. Hence, $\xi=0$.
	
The last part of the proof is to show that \eqref{eq:03-13-1357} and \eqref{eq:03-13-1447} hold.
In fact, we can assume w.l.o.g, $x\in [R_{c_2},R_{c_3}]$. Let $d=R_x\cap[c_2,c_3]$. Remember \eqref{eq:03-11-1552}, which deduces that
\begin{gather*}
\lim_{\lambda\rightarrow 0+}|f(\lambda x)|=\lim_{\lambda\rightarrow 0+}|f(\lambda d)|\leq\max\{\lim_{\lambda\rightarrow 0+}|f(\lambda c_2)|,\lim_{\lambda\rightarrow 0+}|f(\lambda c_3)|\}.
\end{gather*}
It follows that \eqref{eq:03-13-1357} holds.
	
%	If $x\in R_{c_1}\cup R_{c_2}\cup R_{c_3}$, for example, say $x\in R_{c_1}$, then $$\lim_{\lambda\rightarrow 0+}|f(\lambda x)|=\lim_{\lambda\rightarrow 0+}|f(\lambda c_1)|,$$ which also implies \eqref{eq:03-13-1357}.
	
We can further assume w.l.o.g., 
\begin{gather}\label{eq:03-13-1446}
\lim_{\lambda\rightarrow 0+}|f(\lambda b_1)|=\max\{\lim_{\lambda\rightarrow 0+}|f(\lambda b_1)|,\lim_{\lambda\rightarrow0+}|f(\lambda b_2)|,\lim_{\lambda\rightarrow 0+}|f(\lambda b_3)|\}.
\end{gather}
Then, by \eqref{eq:03-13-1427}, $\lim_{\lambda\rightarrow 0+}|f(\lambda b_1)|>0$.
Since \eqref{eq:03-11-1552} and \eqref{eq:03-11-1553}, it follows that
\begin{gather}
\lim_{\lambda\rightarrow 0+}|f(\lambda c_2)|<\max\{\lim_{\lambda\rightarrow0+}|f(\lambda b_1)|,\lim_{\lambda\rightarrow 0+}|f(\lambda b_3)|\}\leq\lim_{\lambda\rightarrow 0+}|f(\lambda b_1)|,\label{eq:03-13-1434}\\
\lim_{\lambda\rightarrow 0+}|f(\lambda c_3)|<\max\{\lim_{\lambda\rightarrow0+}|f(\lambda b_1)|,\lim_{\lambda\rightarrow 0+}|f(\lambda b_2)|\}\leq\lim_{\lambda\rightarrow 0+}|f(\lambda b_1)|,\label{eq:03-13-1435} \\
\lim_{\lambda\rightarrow 0+}|f(\lambda c_1)|\leq\max\{\lim_{\lambda\rightarrow0+}|f(\lambda b_2)|,\lim_{\lambda\rightarrow 0+}|f(\lambda b_3)|\}\leq\lim_{\lambda\rightarrow 0+}|f(\lambda b_1)|.\label{eq:03-13-1433}
	\end{gather}
	
If $\lim_{\lambda\rightarrow0+}|f(\lambda b_2)|+\lim_{\lambda\rightarrow 0+}|f(\lambda b_3)|>0$, by \eqref{eq:03-11-1553}, then the first inequality in \eqref{eq:03-13-1433} is strict; if $\lim_{\lambda\rightarrow0+}|f(\lambda b_2)|+\lim_{\lambda\rightarrow 0+}|f(\lambda b_3)|=0$, then the second inequality in \eqref{eq:03-13-1433} is strict. Hence, one gets
\begin{gather}
\lim_{\lambda\rightarrow 0+}|f(\lambda c_1)|<\lim_{\lambda\rightarrow 0+}|f(\lambda b_1)|.\label{eq:03-13-1445}
\end{gather}
	
	From \eqref{eq:03-13-1446}, \eqref{eq:03-13-1434}, \eqref{eq:03-13-1435} and \eqref{eq:03-13-1445}, we have \eqref{eq:03-13-1447}. 
 It completes the proof.
\end{proof}	

\begin{Lem}\label{Lem:Point2PointForNEq2}
	If $n\geq2$, then $U=\mathbb{S}^{n-1}$, moreover, $\R_+U=\R^n$.
\end{Lem}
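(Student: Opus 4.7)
For $n\ge 3$ the definition \eqref{eq:03-11-1300} gives $U=\mathbb{S}^{n-1}$ immediately, so $\R_+U=\R^n$. The substantive case is $n=2$, which I attack by contradiction. Suppose $x_0\in\mathbb{S}^1\setminus U$; since $U=U'\cap(-U')$, by exchanging $x_0\leftrightarrow -x_0$ if necessary I may assume $x_0\notin U'$, so $\sigma(x_0)$ is not a singleton and contains a non-degenerate open arc $L\subset\mathbb{S}^1$. Combining the disjointness $\sigma(u)\cap\sigma(x_0)=\emptyset$ for $R_u\ne R_{x_0}$ with $\pm x_0\notin U$ yields
\[
\Phi(f(u))\notin L\quad\text{for every }u\in U.
\]
I will derive a contradiction by producing some $u\in U$ with $\Phi(f(u))\in L$.

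The key geometric step is a sandwich inclusion. For $\hat a,\hat b\in U$ linearly independent with $x_0$ strictly inside the angular arc from $\hat a$ to $\hat b$, and for any $\lambda>0$, I rescale to $K\hat a,K\hat b$ with $K>0$ small enough that $\overline{\lambda x_0}\subset\overline{K\hat a}\lor\overline{K\hat b}$ (an explicit computation with Lemma \ref{Lem:0706-2239} shows this holds as soon as $K\le\lambda(\alpha+\beta)$, where $x_0=\alpha\hat a+\beta\hat b$). Applying $\varphi$ and using the ray-invariance $\Phi(f(K\hat a))=\Phi(f(\hat a))$, I obtain
\[
\sigma_\lambda(x_0):=\Phi(\varphi(\overline{\lambda x_0}))\subset[\Phi(f(\hat a)),\Phi(f(\hat b))],
\]
the short arc from $\Phi(f(\hat a))$ to $\Phi(f(\hat b))$, which is independent of $K$. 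Ranging over $\lambda>0$ yields $L\subset\sigma(x_0)\subset[\Phi(f(\hat a)),\Phi(f(\hat b))]$. Since $\Phi(f(\hat a)),\Phi(f(\hat b))\notin L$, this forces them to lie strictly on opposite sides of $L$.

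Fixing $a=K\hat a$, $b=K\hat b$ and parametrizing $c_t=(1-t)a+tb$, the set $T=\{t\in[0,1]:c_t\in\R_+U\}$ is cocountable since $\mathbb{S}^1\setminus U$ is countable. By Lemma \ref{Lem:Collinear} the map $t\mapsto f(c_t)$ moves monotonically along $[f(a),f(b)]$, so $\Phi(f(c_t))$ moves monotonically along the arc $[\Phi(f(\hat a)),\Phi(f(\hat b))]\supset L$, with jumps only at parameters $t^*\notin T$; one of them is the unique $t^*_0$ with $c_{t^*_0}=\lambda^* x_0\in R_{x_0}$, where $\lambda^*=K/(\alpha+\beta)$. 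The main obstacle I foresee is to show that the union of all jump sub-arcs cannot cover $L$ entirely. My plan is a squeezing argument sending $K\to+\infty$: then $\lambda^*\to+\infty$, $\overline{\lambda^* x_0}$ shrinks, and $\sigma_{\lambda^*}(x_0)$ decreases monotonically; combined with the disjointness $\sigma(\Phi(c_{t^*}))\cap L=\emptyset$ for the other jump parameters (with $\Phi(c_{t^*})\ne\pm x_0$), and with Lemma \ref{Lem:FMapsRay2Ray} and Lemma \ref{Lem:LimitesTo0ForN=2} to control $|f|$ at the two ends of each ray, this should force the jumps to collectively miss some point of $L$. Any such missed point is then taken by $\Phi(f(c_t))$ for some $t\in T$, producing $u=c_t/|c_t|\in U$ with $\Phi(f(u))\in L$, contradicting the opening observation. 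The ``moreover'' statement $\R_+U=\R^n$ is then immediate from $U=\mathbb{S}^{n-1}$.
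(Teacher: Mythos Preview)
Your setup is sound: the case $n\ge 3$ is immediate from \eqref{eq:03-11-1300}, and for $n=2$ the reduction to $x_0\notin U'$, the observation $\Phi(f(u))\notin L$ for $u\in U$, and the sandwich inclusion $\sigma(x_0)\subset[\Phi(f(\hat a)),\Phi(f(\hat b))]$ are all correct. The monotonicity of $t\mapsto\Phi(f(c_t))$ along that arc, for $t$ in the cocountable set $T$, also follows cleanly from Lemma \ref{Lem:Collinear}.

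The gap is precisely where you flag the ``main obstacle'': you do not show that the jumps of this monotone function fail to cover $L$, and your plan conflates two different objects. For a jump parameter $t^*\notin T$, write $J(t^*)$ for the arc between the one-sided limits of $\Phi(f(c_t))$. The disjointness you quote is $\sigma(\Phi(c_{t^*}))\cap L=\emptyset$, but $J(t^*)$ is \emph{not} a priori contained in $\sigma(\Phi(c_{t^*}))$; in fact the only inclusion one can extract from the endomorphism identities goes the wrong way. Indeed, for $t<t^*<s$ in $T$ one has $\overline{c_{t^*}}\subset\overline{c_t}\lor\overline{c_s}$, hence $\Phi(\varphi(\overline{c_{t^*}}))\subset[\Phi(f(c_t)),\Phi(f(c_s))]$, and letting $t\uparrow t^*$, $s\downarrow t^*$ gives $\Phi(\varphi(\overline{c_{t^*}}))\subset J(t^*)$, not the reverse. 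So knowing that the $\sigma(\Phi(c_{t^*}))$ miss $L$ says nothing about whether the $J(t^*)$ do. Your squeezing $K\to+\infty$ does not close this either: as $K$ changes the whole segment $[K\hat a,K\hat b]$ moves, so all the jump intervals move simultaneously and cannot be compared across values of $K$; moreover there is no reason that $\bigcap_{\lambda}\Phi(\varphi(\overline{\lambda x_0}))$ should shrink to a point, since $\varphi$ is only a \emph{finitary} lattice endomorphism and need not commute with the infinite meet $\bigwedge_\lambda\overline{\lambda x_0}=\emptyset$. The invocation of Lemmas \ref{Lem:FMapsRay2Ray} and \ref{Lem:LimitesTo0ForN=2} here is only a hope---no mechanism is given that turns control of $|f|$ along rays into control of the arc-lengths of the $J(t^*)$.

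The paper's argument sidesteps this difficulty entirely. Instead of trying to hit $L$ with some $\Phi(f(u))$, it fixes $\mu_0>0$ with $\dim\varphi(\overline{\mu_0 x_0})=2$, picks a segment $[x_1,x_2]\subset\varphi(\overline{\mu_0 x_0})$ with $R_{x_1}\neq R_{x_2}$, chooses $b_1,b_2\in U$ flanking $x_0$, and uses Lemma \ref{Lem:LimitesTo0ForN=2} in a much more direct way: since $f(\lambda b_i)\to o$ as $\lambda\to 0^+$, the intersection points
\[
y_\gamma\ \in\ [\gamma f(\mu_0 b_1),x_2]\cap[\gamma f(\mu_0 b_2),x_1]\ \subset\ \bigl(\overline{f(\lambda b_1)}\lor\varphi(\overline{\mu_0 x_0})\bigr)\land\bigl(\overline{f(\lambda b_2)}\lor\varphi(\overline{\mu_0 x_0})\bigr)=\varphi(\overline{\mu_0 x_0})
\]
(for suitably small $\lambda$) tend to $o$ as $\gamma\to 0^+$. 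Closedness then forces $o\in\varphi(\overline{\mu_0 x_0})$, contradicting Lemma \ref{Lem:PhiPSIsNotBarO}. No analysis of jumps is needed.
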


\begin{proof}
	For $n\geq3$, by \eqref{eq:03-11-1300}, it is clear that $U=\mathbb{S}^{n-1}$, and thus, $\R_+U=\R^n$.
	
	Now, assume that $n=2$. We prove it by contradiction. Suppose that $b\in\mathbb{S}^1\setminus U$ such that $\dim\varphi(\overline {\mu_0 b})=2$ for some $\mu_0>0$.	
	Choose a segment $[x_1,x_2]\subset\varphi(\overline {\mu_0 b})$ satisfying $R_{x_1}\cap R_{x_2}=\emptyset$, and choose $b_1,b_2\in U$ such that $b_1+b_2\neq o$ and $b\in\relint[R_{b_1},R_{b_2}]$.
	
	It is clear that $[x_1,x_2]\subset\varphi(\overline{\mu_0b})\subset\overline{f(\mu_0b_1)}\lor\overline{f(\mu_0b_2)}$ and $\varphi(\overline{\mu_0b})\cap(\overline{f(\mu_0b_1)}\cup\overline{f(\mu_0b_2)})=\emptyset$. Since $R_{x_1}\cap R_{x_2}=\emptyset$, we get $R_{x_1}$$f(\mu_0b_1)=\alpha_1x_1+\alpha_2x_2$ and $f(\mu_0b_2)=\beta_1x_1+\beta_2x_2$ for some $\alpha_1, \alpha_2,\beta_1,\beta_2\in\mathbb R$. Without loss of generality, let $\alpha_1,\beta_2>0$, $\alpha_2,\beta_1<0$. It is straightforward to show that
	\[[\gamma f(\mu_0b_1), x_2]\cap[\gamma f(\mu_0b_2), x_1]=\left\{\frac{(\gamma\beta_1+\gamma\beta_2-1)\gamma\alpha_1x_1+(\gamma\alpha_1+\gamma\alpha_2-1)\gamma\beta_2x_2}{\gamma^2(\alpha_1\beta_2-\alpha_2\beta_1)+\gamma(\alpha_2+\beta_1)-1}\right\}=:\{y_\gamma\}\]
	for small enough $\gamma>0$. Clearly, $\lim_{\gamma\to 0+} y_\gamma=o$. By Lemma \ref{Lem:LimitesTo0ForN=2}, we have $[\gamma f(\mu_0b_1), x_2]\cap[\gamma f(\mu_0b_2), x_1]\subset[\overline{f(\lambda b_1)},x_1,x_2]\cap[\overline{f(\lambda b_2)},x_1,x_2]$ for small enough $\lambda>0$. Observe that
	\begin{gather*}
		\begin{split}
			[\overline{f(\lambda b_1)},x_1,x_2]\cap[\overline{f(\lambda b_2)},x_1,x_2]&\subset \left(\overline{f(\lambda b_1)}\lor\varphi(\overline {\mu_0 b})\right) \land \left(\overline{f(\lambda b_2)}\lor\varphi(\overline {\mu_0 b})\right)\\
			&=\varphi\left((\overline{\lambda b_1}\lor\overline {\mu_0 b})\land(\overline{\lambda b_2}\lor\overline {\mu_0 b})\right)=\varphi(\overline {\mu_0 b}).
		\end{split}
	\end{gather*}
	Hence, $y_\gamma\in\varphi(\overline {\mu_0 b})$ for small enough $\gamma>0$. Together with the closedness of $\varphi(\overline {\mu_0 b})$, we have $o\in\varphi(\overline {\mu_0 b})$, which contradicts Lemma \ref{Lem:PhiPSIsNotBarO}. It completes the proof.
\end{proof}	

\begin{Lem}\label{Lem:fIsLinearity}
	If $n\geq2$, then $f\in\mathrm{GL}(n)$.
\end{Lem}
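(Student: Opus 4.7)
My plan is to show $f$ is a non-singular affinity via the fundamental theorem of affine geometry (Lemma \ref{Lem:Affinity}), and then use $f(o)=o$ (Remark \ref{Rem:Describef}, combined with $\R_+U=\R^n$ from Lemma \ref{Lem:Point2PointForNEq2}) to conclude $f$ is linear, hence $f\in\mathrm{GL}(n)$.

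Two of the hypotheses of Lemma \ref{Lem:Affinity} are already in place. Collinearity is preserved by Lemma \ref{Lem:Collinear}, and $f(\R^n)$ is not contained in a line because Lemma \ref{Lem:Plane2Plane} implies every $2$-dimensional linear subspace is sent into a $2$-dimensional subspace of $\R^n$.

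The main obstacle is injectivity of $f$. Distinct rays are separated by Lemma \ref{Lem:OneRayMapstoOneRay}, so the remaining task is to show $f$ is strictly monotone along each ray $R_b$. Suppose for contradiction that $f(\lambda_1 b)=f(\lambda_2 b)=p$ for some $0<\lambda_1<\lambda_2$; Lemma \ref{Lem:fIncreasing} forces $f\equiv p$ on the segment $[\lambda_1 b,\lambda_2 b]$. Fix $c\in\R^n\setminus\R b$ and work inside the $2$-dimensional linear subspace $P=\lin\{b,c\}$. For each $x\in P\setminus\R b$, the two distinct lines in $P$ joining $x$ with $\lambda_1 b$ and with $\lambda_2 b$ are each sent by Lemma \ref{Lem:Collinear} into the single line $\ell_x$ through $p$ and $f(x)$. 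For any $x,y\in P\setminus\R b$ such that the line through $y,\lambda_1 b$ is not parallel to the line through $x,\lambda_2 b$ (the generic case), these two lines meet at some $z$, and the argument $z=(1-t)\lambda_1 b+ty\in\R b$ forces $y\in\R b$, so $z\in P\setminus\R b$. Lemma \ref{Lem:Collinear} then places $f(z)\in\ell_x\cap\ell_y$, while Lemma \ref{Lem:OneRayMapstoOneRay} (together with Lemma \ref{Lem:AllMapstoAll}) forces $f(z)\notin\R p$, so in particular $f(z)\neq p$; as $\ell_x$ and $\ell_y$ both contain $p$, this forces $\ell_x=\ell_y$. Thus $\ell_x$ is constant on a dense subset of $P\setminus\R b$. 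Applying Lemma \ref{Lem:Collinear} to any line in $P$ not lying in $\R b$ (which meets that dense subset in a dense set) extends this to $f(P\setminus\R b)\subset\ell$ for one common line $\ell$. Combined with $f(\R b)\subset\R p$ from Lemma \ref{Lem:AllMapstoAll}, we get $f(P)\subset\ell\cup\R p$. But $c\notin\R b$ implies $f(c)\notin\R p$, so $\ell$ misses the origin, and the only rays from the origin contained in $\ell\cup\R p$ are $R_p$ and $R_{-p}$. This contradicts Lemma \ref{Lem:OneRayMapstoOneRay}, which requires $f$ to send the continuum of distinct rays in $P$ to distinct image rays.

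With injectivity in hand, Lemma \ref{Lem:Affinity} yields $f(x)=Ax+v$ for some $A\in\mathrm{GL}(n)$ and $v\in\R^n$; since $f(o)=o$, we have $v=o$, so $f\in\mathrm{GL}(n)$.
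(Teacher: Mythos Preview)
Your overall plan matches the paper's: verify the hypotheses of Lemma~\ref{Lem:Affinity} (collinearity from Lemma~\ref{Lem:Collinear}, image not in a line, injectivity), then use $f(o)=o$ to conclude $f\in\mathrm{GL}(n)$. The only substantive task is injectivity, and here your argument has a genuine gap at the very last step.

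You correctly deduce $f(P\setminus\R b)\subset\ell$ for a single line $\ell$ not through the origin, and hence $f(P)\subset\ell\cup\R p$. But the contradiction you draw from this does not follow. You assert that ``the only rays from the origin contained in $\ell\cup\R p$ are $R_p$ and $R_{-p}$'' and that this clashes with Lemma~\ref{Lem:OneRayMapstoOneRay}. The problem is that Lemma~\ref{Lem:OneRayMapstoOneRay} only tells you the \emph{points} $f(x)$ lie on pairwise distinct rays $R_{f(x)}$; it does not say those rays are \emph{contained} in $f(P)$. A continuum of points on the affine line $\ell$ can perfectly well lie on a continuum of distinct rays through the origin, none of which is contained in $\ell\cup\R p$. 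So no contradiction arises from what you have written. The fix is to invoke Lemma~\ref{Lem:FMapsRay2Ray}: for any $c\in P\setminus\R b$ the set $\{f(\lambda c):\lambda>0\}$ is unbounded and lies in $\ell\cap R_{f(c)}$, but a line missing the origin meets any ray through the origin in at most one point --- a genuine contradiction.

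For comparison, the paper's injectivity argument is much shorter and avoids this detour entirely. It uses Lemma~\ref{Lem:AlwaysRegular} to pick, independently of the hypothetical collision $f(a')=f(a)$ with $a'\prec a$, a point $b$ (linearly independent from $a$) with some $b'\prec b$ satisfying $f(b')\prec f(b)$. Choosing $c\in\relint[a,b']\cap\relint[a',b]$ and applying Lemma~\ref{Lem:Collinear} twice gives that $f(c)$ lies on both $\aff\{f(a),f(b')\}$ and $\aff\{f(a'),f(b)\}=\aff\{f(a),f(b)\}$; since $f(a')=f(a)$ these two lines through $f(a)$ coincide, forcing $f(b')=f(b)$, which contradicts the choice of $b$. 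This is a two-line argument once Lemma~\ref{Lem:AlwaysRegular} is available, whereas your approach rebuilds a global rigidity statement ($f(P)\subset\ell\cup\R p$) that is stronger than needed.
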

	
\begin{proof}
By Lemma \ref{Lem:OneRayMapstoOneRay}, $f(\R^n)$ is not contained in a line.
Together with $f(o)=o$, Lemmas \ref{Lem:Affinity},  \ref{Lem:Collinear} and \ref{Lem:Point2PointForNEq2}, it is enough to show that $f$ is injective.
		
Suppose $f(a')=f(a)$ but $a'\neq a$. By Lemma \ref{Lem:OneRayMapstoOneRay}, we can assume $a'\prec a\neq o$. 
Following from Lemma \ref{Lem:AlwaysRegular}, we can choose $b'\prec b$ with $f(b')\prec f(b)$ such that $a,b$ are linearly independent. 
Clearly there is $c\in\relint[a,b']\cap\relint[a',b]$.
By Lemma \ref{Lem:Collinear} and Lemma \ref{Lem:Point2PointForNEq2},
$f(a),f(c),f(b')$ are collinear, as well as $f(a'),f(c),f(b)$. 
Thus $f(a')=f(a)$ gives $f(b)=f(b')$, a contradiction. It completes the proof.
\end{proof}

Now Theorem \ref{Thm:MainThm_Identity} follows from the following.
\begin{Thm}\label{Thm:EndForNgeq3}
	If $n\geq2$, $\varphi$ is a non-constant endomorphism of $(\PC,\lor,\land)$, then $\varphi\in GL(n)$.
\end{Thm}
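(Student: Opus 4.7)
The plan is to upgrade the ray-level identity $\varphi(\overline{x}) = \overline{f(x)} = f(\overline{x})$, established through Lemma \ref{Lem:fIsLinearity}, to the full statement $\varphi(K) = f(K)$ for every $K \in \PC$. Since $f \in \mathrm{GL}(n)$ induces an automorphism of $\PC$ (linear bijections preserve closure, convex hulls, and the property of not containing $o$), the composition $\varphi' := f^{-1} \circ \varphi$ is again a non-constant endomorphism of $\PC$, and it satisfies $\varphi'(\overline{x}) = \overline{x}$ for every $x \neq o$. It therefore suffices to prove $\varphi' = \mathrm{id}$ on $\PC$; the extreme cases $K = \emptyset$ and $K = \R^n = \overline{o}$ are already handled by Lemmas \ref{Lem:EmptysetMapstoEmptyset} and \ref{Lem:AllMapstoAll}.

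For a closed pseudo-cone $K$, the inclusion $K \subset \varphi'(K)$ is immediate from order preservation (Lemma \ref{Lem:EndoIsInc}): every $x \in K$ gives $\overline{x} \subset K$, hence $\overline{x} = \varphi'(\overline{x}) \subset \varphi'(K)$, and $K$ is the union of its contained rays. For the reverse inclusion, I would invoke Theorem \ref{Thm:Duality} in the form $K = K^{\ast\ast} = \bigcap_{u \in K^{\ast}} L_u$, where $L_u := \overline{u}^{\ast} = \{z \in \R^n : \langle z, u\rangle \leq -1\}$. If some $y_0 \in \varphi'(K) \setminus K$ existed, then some $u \in K^{\ast}$ would separate $y_0$ from $K$, giving $K \subset L_u$ and $y_0 \notin L_u$. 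By monotonicity, $\varphi'(K) \subset \varphi'(L_u)$, so establishing $\varphi'(L_u) = L_u$ for every $u \neq o$ would yield the contradiction $y_0 \in L_u$.

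The heart of the argument is thus the claim $\varphi'(L_u) = L_u$, and I expect this to be the main obstacle. The inclusion $L_u \subset \varphi'(L_u)$ is obtained as before. For the converse, set $M := \varphi'(L_u)$ and pick any $z_0 \in M$ with $z_0 \neq o$. A direct calculation shows that $\overline{z_0} \cap L_u$ is either empty (when $\langle z_0, u\rangle \geq 0$) or equals the sub-ray $\overline{\mu z_0}$ with $\mu = \max\bigl(1,\, 1/|\langle z_0, u\rangle|\bigr)$ (when $\langle z_0, u\rangle < 0$). In both cases this intersection is a lattice element fixed by $\varphi'$, so applying $\varphi'$ to $\overline{z_0} \land L_u = \overline{z_0} \cap L_u$ and using $\varphi'(\overline{z_0}) = \overline{z_0}$ forces $\overline{z_0} \cap M = \overline{z_0} \cap L_u$. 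The empty case contradicts $z_0 \in \overline{z_0} \cap M$, while the sub-ray case gives $z_0 \in \overline{\mu z_0} \subset L_u$. The remaining possibility $M = \R^n$ (corresponding to $o \in M$) is excluded by running the same intersection argument with the particular ray $\overline{u}$: one has $\overline{u} \cap L_u = \emptyset$ since $\langle \lambda u, u\rangle = \lambda |u|^2 > 0 > -1$ for all $\lambda \geq 1$, whereas $M = \R^n$ would force $\overline{u} \cap M = \overline{u} \neq \emptyset$.
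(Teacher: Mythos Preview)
Your argument is correct. The core mechanism---intersecting $\varphi'(K)$ with a ray $\overline{z_0}$ and using that $\varphi'$ fixes both rays and $\emptyset$ to force $\overline{z_0}\cap\varphi'(K)=\overline{z_0}\cap K$---is exactly what the paper does. The difference is that you apply this trick only to the half-spaces $L_u$ and then recover general $K$ via the representation $K=K^{\ast\ast}=\bigcap_{u\in K^\ast}L_u$, whereas the paper applies the identical ray-intersection argument directly to an arbitrary closed pseudo-cone $A$: if $a\in\varphi(A)\setminus f(A)$, then $\overline{f^{-1}(a)}\land A$ is a sub-ray $\overline{\lambda_0 f^{-1}(a)}$ with $\lambda_0>1$, and applying $\varphi$ yields $\overline a=\overline{\lambda_0 a}$, a contradiction. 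Your computation of $\overline{z_0}\cap L_u$ and your separate exclusion of $M=\R^n$ (which the paper handles via Lemma~\ref{Lem:PhiPSIsNotBarO}) are fine, but the detour through duality is not needed: nothing in your half-space step uses that $L_u$ is a half-space, so you could have run it on $K$ itself and skipped the first paragraph of the reverse-inclusion argument entirely.
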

	
\begin{proof}
By Lemma \ref{Lem:fIsLinearity}, $f\in GL(n)$. 
We only need to verify $\varphi(A)=f(A)$ for all $A\in\PC$. If $A=\emptyset$ or $\R^n$, by Lemma \ref{Lem:EmptysetMapstoEmptyset} and Lemma \ref{Lem:AllMapstoAll}, there is nothing to prove. 
Now, let $A$ be a closed pseudo-cone.
	
	Since
	\begin{gather*}
		f(A)=\bigcup_{a\in A}f(a)\subset \bigcup_{a\in A}\overline{f(a)} = \bigcup_{a\in A}\varphi(\overline a)=\varphi(A),
	\end{gather*}
	it is sufficient to show that $\varphi(A)\subset f(A)$. If not, assume $a\in\varphi(A)\setminus f(A)$.
		
	Observe that
	\begin{gather*}
	    \varphi\left(\overline{f^{-1}(a)}\land A\right)=\varphi\left(\overline{f^{-1}(a)}\right)\land\varphi(A)=\overline{f(f^{-1}(a))}\land\varphi(A)=\overline a\land\varphi(A)=\overline a.
	\end{gather*}
	Thus, by Lemma \ref{Lem:EmptysetMapstoEmptyset}, $\overline{f^{-1}(a)}\land A\neq\emptyset$. Since $a\notin f(A)$, i.e. $f^{-1}(a)\notin A$, then
	\begin{gather*}
	    \overline{f^{-1}(a)}\land A=\overline{\lambda_0 f^{-1}(a)}\text{ for some }\lambda_0 > 1,
	\end{gather*}
	which implies that
	\begin{gather*}
		\overline a=\varphi\left(\overline{f^{-1}(a)}\land A\right)=\varphi\left(\overline{\lambda_0 f^{-1}(a)}\right)=\overline{f(\lambda_0 f^{-1}(a))}=\overline{\lambda_0 a},
	\end{gather*}
	a contradiction. It completes the proof.
\end{proof}
	
\subsection{Endomorphisms for \texorpdfstring{$n=1$}{n=1}}
Finally, we consider the case $n=1$ for completeness.
	
\begin{Thm}\label{Thm:EndForN=1}
	If $\varphi$ is an endomorphism of $(\mathcal{PC}^1,\lor,\land)$, then one of the following is true.
	
	(i)
	\begin{gather}\label{eq:03-11-2231}
	    \varphi(\overline x)=
	    \begin{cases}
	        K, & x>0,\\
	        L, & x<0,
	    \end{cases}
	\end{gather}
	for some $K,L\in\mathcal{PC}^1$, where $\varphi(\emptyset) = K \subset L = \varphi(\overline o)$ or $\varphi(\emptyset) = L \subset K = \varphi(\overline o)$.
	
	(ii) There is a monotone function $f:\R \to \R$ such that
	\begin{gather}
	    \varphi(\overline x)=\overline{f(x)}
	\end{gather}
	with $\varphi(\emptyset)=\emptyset$ and
	\begin{gather}\label{eq:03-13-1643}
	    f(x)=0\Longleftrightarrow x=0.
	\end{gather}
\end{Thm}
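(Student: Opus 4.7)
The plan is to exploit the very simple structure of $\mathcal{PC}^1$: every element is $\emptyset$, $\overline o=\R$, or a one-sided half-line $\overline x$ (equal to $[x,+\infty)$ if $x>0$ and to $(-\infty,x]$ if $x<0$). Hence $\varphi$ is determined by $K:=\varphi(\emptyset)$, $M:=\varphi(\overline o)$, and the map $x\mapsto\varphi(\overline x)$ on $\R\setminus\{0\}$, with $K\subset\varphi(A)\subset M$ for every $A\in\mathcal{PC}^1$ by Lemma~\ref{Lem:EndoIsInc}. The identities that drive everything are the ``crossing'' relations
\[
\varphi(\overline x)\cap\varphi(\overline y)=K,\qquad \varphi(\overline x)\lor\varphi(\overline y)=M,\qquad (x>0,\ y<0),
\]
obtained from $\overline x\land\overline y=\emptyset$ and $\overline x\lor\overline y=\overline o$ for rays of opposite sign, together with the monotonicity fact that $\{\varphi(\overline x):x>0\}$ and $\{\varphi(\overline x):x<0\}$ are two nested chains shrinking as $|x|$ grows.

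I would first dispatch the \emph{degenerate} case in which some $\varphi(\overline x)$ with $x\neq 0$ lies in $\{\emptyset,\R\}$. Assuming for instance $\varphi(\overline{x_0})=\emptyset$ for some $x_0>0$, the join identity forces $\varphi(\overline y)=M$ for every $y<0$, and feeding this back into the intersection identity collapses every $\varphi(\overline x)$ ($x>0$) to $\emptyset$; this is exactly case (i) with positive rays mapped to $K=\emptyset=\varphi(\emptyset)$ and negative rays mapped to $L=M=\varphi(\overline o)$. The three symmetric scenarios ($\varphi(\overline{y_0})=\emptyset$ for some $y_0<0$, or $\varphi(\overline{x_0})=\R$, or $\varphi(\overline{y_0})=\R$) produce the other instances of (i).

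In the remaining \emph{non-degenerate} case, $\varphi(\overline x)=\overline{f(x)}$ with $f(x)\neq 0$ for every $x\neq 0$. Since $[a,+\infty)$ and $(-\infty,b]$ are incomparable under inclusion, while each one-sided family is a chain, the sign of $f$ must be constant on each of $(0,+\infty)$ and $(-\infty,0)$; unpacking the chain condition on the corresponding half-lines then forces $f$ to be monotone on each side in the ordinary sense. If the two sides have \emph{opposite} signs, the crossing relations immediately give $K=\emptyset$ and $M=\R$, and setting $f(0)=0$ fuses the two monotone halves into a globally monotone $f:\R\to\R$ with $f(x)=0\Leftrightarrow x=0$, which is case (ii). If the two sides have the \emph{same} sign, the identity $\max(f(x),f(y))+\min(f(x),f(y))=f(x)+f(y)$ together with the constancy of the endpoints of $K$ and $M$ forces $f(x)+f(y)$ to be independent of $x>0$ and $y<0$; fixing one variable shows that $f$ is constant on each side, returning to a (possibly trivial) instance of (i).

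The main obstacle I foresee is the ``signs-agree'' sub-case above: it is the only place where a naive reading might suggest extra endomorphisms, so one must pin down the interaction between the $\cap$ and $\lor$ identities across the positive/negative split that collapses the apparent extra freedom into (i). The remaining steps -- the sign dichotomy, the one-sided monotonicity, and the gluing at $0$ in the opposite-signs sub-case -- are direct computations within a single half-line that I expect to be routine.
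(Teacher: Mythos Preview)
Your proposal is correct, and the core engine---the pair of ``crossing'' identities
\[
\varphi(\overline x)\land\varphi(\overline y)=\varphi(\emptyset),\qquad \varphi(\overline x)\lor\varphi(\overline y)=\varphi(\overline o)\qquad (x>0,\ y<0)
\]
together with monotonicity along each half-line---is exactly what the paper uses. The case organisation, however, is genuinely different. The paper splits according to the \emph{values of the top and bottom}: Case~I is $\varphi(\overline o)\neq\R$, Case~II is $\varphi(\overline o)=\R$ but $\varphi(\emptyset)\neq\emptyset$, and Case~III is $\varphi(\overline o)=\R$, $\varphi(\emptyset)=\emptyset$. In Cases~I and~II the paper observes directly that two comparable half-lines with prescribed meet $\varphi(\emptyset)$ and join $\varphi(\overline o)$ must coincide with that meet and that join, so $\{\varphi(\overline x),\varphi(\overline{-x})\}=\{\varphi(\emptyset),\varphi(\overline o)\}$, yielding~(i). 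You instead split according to the \emph{values on generic rays}: first the degenerate case where some $\varphi(\overline x)\in\{\emptyset,\R\}$, and then, in the non-degenerate case, according to whether the sign of $f$ agrees on the two half-lines. Your ``signs-agree'' sub-case is precisely where the paper's Cases~I and (part of)~II live; your $\max+\min=f(x)+f(y)$ trick to force $f$ constant on each side is a neat alternative to the paper's meet-and-join rigidity argument. Both routes are short; the paper's avoids introducing $f$ until alternative~(ii) is genuinely in play, while yours makes the bifurcation between (i) and (ii) appear as a sign dichotomy, which is arguably more conceptual.
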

	
\begin{proof}
    Assume w.l.o.g that $\varphi$ is not constant. By Lemma \ref{Lem:const}, one gets $\varphi(\emptyset)\subsetneq\varphi(\overline 0)$.
    
	For $xy> 0$, we have
	\begin{gather}
			\varphi(\overline x)\lor \varphi(\overline {-y}) = \varphi(\overline 0), \label{eq:03-11-2239}\\
			\varphi(\overline x)\land \varphi(\overline {-y}) = \varphi(\emptyset).  \label{eq:03-17-1317}
	\end{gather}

    \myvskip
	\noindent\textit{Case I. } $\varphi(\overline 0)\neq\overline 0$.
    \myvskip	
	If $\varphi(\overline 0)=\emptyset$, then $\varphi$ is constant. Thus, $\varphi(\overline 0)\neq\emptyset$.
	
	Set $\overline a=\varphi(\overline 0)$ with $a\neq 0$. 
	For $x\neq 0$, by \eqref{eq:03-11-2239} with $y=x$, one gets $\varphi(\overline x), \varphi(\overline {-x}) \subset \overline a$.
	Together with \eqref{eq:03-17-1317}, $\varphi(\overline x)=\overline a$ and $\varphi(\overline {-x})=\varphi(\emptyset)$; or $\varphi(\overline x)=\varphi(\emptyset)$ and $\varphi(\overline {-x})=\overline a$.
	
	If $xy>0$, then, by \eqref{eq:03-11-2239}, \eqref{eq:03-17-1317} and the above result, it is clear that $\varphi(\overline x)=\varphi(\overline y)$ and $\varphi(\overline {-x})=\varphi(\overline {-y})$.
	
	Thus, \eqref{eq:03-11-2231} holds in Case I.
	
	\myvskip
	\noindent\textit{Case II. } $\varphi(\overline 0)=\overline 0$ and $\varphi(\emptyset)\neq\emptyset$.
    \myvskip			
	For $x\neq 0$, by \eqref{eq:03-17-1317} with $y=x$, one has $\varphi(\overline x), \varphi(\overline{-x})\neq\emptyset$. Set $\overline a=\varphi(\overline x)$ and $\overline b=\varphi(\overline{-x})$. Suppose $ab>0$. 
	It follows that $\overline a\lor\overline b\neq\overline 0$, a contradiction to \eqref{eq:03-11-2239}.
	Suppose $ab<0$. It follows that $\overline a\land\overline b=\emptyset$, a contradiction to \eqref{eq:03-17-1317}.
	Hence, $ab=0$. By $\overline a\land \overline b=\varphi(\emptyset)$, one gets $a\neq 0$ or $b\neq 0$. 
	That is, one of $\varphi(\overline x)$ and $\varphi(\overline{-x})$ is $\overline 0$, and the other is $\varphi(\emptyset)$.
	
	If $xy>0$, then, by \eqref{eq:03-11-2239}, \eqref{eq:03-17-1317} and the above result, it is clear that $\varphi(\overline x)=\varphi(\overline y)$ and $\varphi(\overline {-x})=\varphi(\overline {-y})$.
	
	Thus, \eqref{eq:03-11-2231} holds in Case II.	
	\myvskip	
	\noindent\textit{Case III. } $\varphi(\overline 0)=\overline 0$ and $\varphi(\emptyset)=\emptyset$.
	\myvskip		
	If $\{\varphi(\overline z),\varphi(\overline {-z})\}=\{\overline 0,\emptyset\}$ for some $z\neq 0$, then, by \eqref{eq:03-11-2239} and \eqref{eq:03-17-1317}, we get that $\varphi(\overline x)=\varphi(\overline z)$ and $\varphi(\overline{-x} )=\varphi(\overline{-z})$ for any $xz>0$.
	It deduces \eqref{eq:03-11-2231}.
	
	Otherwise, by \eqref{eq:03-11-2239} and \eqref{eq:03-17-1317}, $\emptyset\subsetneq\varphi(\overline x)\subsetneq\overline 0$ for all $x\neq 0$.
	Thus, one defines a function $$f:\R\rightarrow\R\text{ such that } \varphi(\overline x)=\overline{f(x)}.$$
	It is clear that
	%that $f(0)=0$. If $f(x_0)=0$ for some $x_0\neq 0$, then $$\varphi(\overline{-x_0})=\varphi(\overline{x_0})\land \varphi(\overline{-x_0})=\varphi(\emptyset)=\emptyset,$$
	%a contradict. It follows that 
	\eqref{eq:03-13-1643} holds in Case III.

	If $f(1)>0$, then $f(x)>0$ for all $x>0$ since either $\overline{f(x)} \subset \overline{f(1)}$ or $\overline{f(x)} \supset \overline{f(1)}$.
	By $\varphi(\overline {-1})\land \varphi(\overline 1)=\emptyset$, we have $f(-1)<0$ which implies $f(x)<0$ for all $x <0$.
	For $x>y>0$ or $x<y<0$, one has $\overline x\subset \overline y$, which implies $f(x)>f(y)>0$ or $f(x)<f(y)<0$, respectively. Hence, $f$ is increasing.
	Similarly, $f$ is decreasing if $f(1)<0$.
\end{proof}

\section*{Acknowledgement}
\addcontentsline{toc}{section}{Acknowledgement}
The authors express sincere thanks to the reviewers for their careful reading the first and second versions of this paper, pointing out some language errors and for their valuable suggestions
and comments which improved the paper.
The work of the first and the third author was supported in part by the National Natural Science Foundation
of China (Project 12171304).

%\bibliography{ref}
\bibliographystyle{crelle}

\end{document}